\tikzset{
    every node/.style={draw, circle, inner sep=1pt}, 
    every label/.style={rectangle, draw=none, blue}, 
    selected/.style={fill=red!40}, 
    edge weight/.style={midway, rectangle, fill=white, draw=none}
}
\newtheorem{theorem}{Theorem}[section]
\newtheorem{lemma}[theorem]{Lemma}
\newtheorem{proposition}[theorem]{Proposition}
\newtheorem{corollary}[theorem]{Corollary}
\theoremstyle{definition}
\newtheorem{definition}[theorem]{Definition}
\newtheorem{observation}[theorem]{Observation}
\newtheorem{remark}[theorem]{Remark}
\newtheorem{example}[theorem]{Example}
\newtheorem{problem}[theorem]{Problem}
\newtheorem{algorithm}[theorem]{Algorithm}
\newcommand{\trans}{^\top}
\newcommand{\bzero}{\mathbf{0}}
\newcommand{\bone}{\mathbf{1}}
\newcommand{\bb}{\mathbf{b}}
\newcommand{\be}{\mathbf{e}}
\newcommand{\bx}{\mathbf{x}}
\newcommand{\by}{\mathbf{y}}
\newcommand{\bz}{\mathbf{z}}
\newcommand{\bw}{\mathbf{w}}
\newcommand{\Col}{\operatorname{Col}}
\newcommand{\vspan}{\operatorname{span}}
\newcommand{\mptn}{\mathcal{S}}
\newcommand{\diag}{\operatorname{diag}}
\newcommand{\wtt}{\mathcal{T}_2}
\newcommand{\wtot}{\mathcal{T}_1^{(2)}}
\newcommand{\mul}{\operatorname{mult}}
\title{Inverse Fiedler vector problem of a graph}
\author{
Jephian C.-H.~Lin
\thanks{Department of Applied Mathematics, National Sun Yat-sen University, Kaohsiung 80424, Taiwan (jephianlin@gmail.com)}
\and 
Mahsa N Shirazi
\thanks{Department of Mathematics, University of Manitoba, Winnipeg, MB, Canada, R3T 2N2 (mahsa.nasrollahi@gmail.com)}}
\date{\today}
\begin{document}

\maketitle

\begin{abstract}
Given a graph and one of its weighted Laplacian matrix, a Fiedler vector is an eigenvector with respect to the second smallest eigenvalue.  The Fiedler vectors have been used widely for graph partitioning, graph drawing, spectral clustering, and finding the characteristic set.  This paper studies how the graph structure can control the possible Fiedler vectors for different weighted Laplacian matrices.  For a given tree, we characterize all possible Fiedler vectors among its weighted Laplacian matrix.  As an application, the characteristic set can be anywhere on a tree, except for the set containing a single leaf.  For a given cycle, we characterize all possible eigenvectors corresponding to the second or the third smallest eigenvalue.   
\end{abstract}  

\noindent{\bf Keywords:} 
inverse problem, weighted Laplacian matrix, Fiedler vector, Dirichlet matrix, Perron vector

\medskip

\noindent{\bf AMS subject classifications:}
05C22, %%Signed and weighted graphs
%05C38, %%Paths and cycles
05C50, %%Graphs and linear algebra (matrices, eigenvalues, etc.)
% 05C57, %%Games on graphs
%05C75, %%Structural characterization of families of graphs
% 05C83, %%Graph minors
%05C85, %%Graph algorithms
% 15A03, %%Vector spaces, linear dependence, rank
15A18, %%Eigenvalues, singular values, and eigenvectors
% 15A29, %%Inverse problems
%15B35, %%Sign pattern matrices
15B57, %%Hermitian, skew-Hermitian, and related matrices
%58C15, %%Implicit function theorems; global Newton methods
65F18. %%Inverse eigenvalue problems
%68R10. %%Graph theory (including graph drawing)
%========================================================

\section{Introduction}
\label{sec:intro}
Let $G$ be a graph on $n$ vertices.  The set $\mptn(G)$ collects all $n\times n$ real symmetric matrices whose off-diagonal $i,j$-entry is nonzero if and only if $\{i,j\}$ is an edge of $G$.  Note that there are no restrictions on the diagonal entries.  The \emph{inverse eigenvalue problem of a graph $G$} (IEP-$G$) studies the possible spectra among matrices in $\mptn(G)$, aiming to explore the relations between the graph and its associated spectral properties.  The research on the IEP-$G$ garners lots of attentions and has produced fruitful results; see, e.g., the monograph \cite{IEPGZF22} and the references there in. 

Aside from its theoretical interests, another motivation of the IEP-$G$ is from the vibration theory; see, e.g., \cite{GladwellIPiV05}.   In 1974, Hochstadt~\cite{Hochstadt74} studied the vibration of a string and called  the matrices in $\mptn(P_n)$ with nonnegative off-diagonal entries as the Jacobi matrices, where $P_n$ is the path on $n$ vertices.  In 1976, Gray and Wilson~\cite{GW76} and Hald~\cite{Hald76} independently showed that a multiset of $n$ real numbers is the spectrum of some Jacobi matrices if and only if all numbers are distinct.  In 1980, Ferguson~\cite{Ferguson80} studied the vibration of a ring and called the matrices in $\mptn(C_n)$ with nonnegative off-diagonal matrices as the periodic Jacobi matrices; the same paper characterized all possible spectra of a periodic Jacobi matrix.  In general, any spring-mass system can be modeled by a simple graph where the vertices and the edges represent the masses and the springs, respectively.  Given a spring-mass system represented by a graph $G$, its vibration behavior is governed by the differential equation $M\ddot{\bx} = -L \bx$, where $M$ is a diagonal matrix determined by the weights of the masses, $\bx$ is a vector recording the displacements of the masses, and $L$ is a matrix in $\mptn(G)$ with nonpositive off-diagonal entries; see, e.g., \cite{GladwellIPiV05} for more details.  While the IEP-$G$ shifted its attention to $\mptn(G)$, these matrices in $\mptn(G)$ with a fixed sign on the off-diagonal nonzero entries hold stronger connections to the vibration theory and remain an interesting set to be explored.

This paper studies the special case when every mass has unit weight and when the system is not tied with other system (free ends).  Thus, $M = I$ and $L$ has row sum zero on each row.  That is, $L$ is in the subset of $\mptn(G)$ defined by   
\[
    \mptn_L(G) = \{A \in \mptn(G): A\text{ has nonpositive off-diagonal entries and }A\bone = \bzero\},
\]
where $\bone$ is the all-ones vector and $\bzero$ is the zero vector.  In this case, the differential equation becomes $\ddot{\bx} = -L\bx$.  Solving the differential equation, the eigenvalues and the eigenvectors of $L$ describe the frequencies and the modes of the vibration, respectively.  

Matrices in $\mptn_L(G)$ are known as the weighted Laplacian matrices of $G$, whose definition and properties will be reviewed later.  For a given weighted Laplacian matrix, an eigenvector with respect to the second smallest eigenvalue is called a Fiedler vector \cite{Fiedler75class}, and it has been used widely for graph partitioning \cite{PSL90, CCSz97, UZ14}, graph drawing \cite{Hall70, Koren05}, spectral clustering \cite{SM00, vLuxburg07}, and many real-world applications.  It is also used to define the characteristic set of a tree, which can be viewed as the central part of the tree \cite{Merris87}.

This paper studies the \emph{inverse Fiedler vector problem of a graph}:  Given a graph $G$, what are the possible Fiedler vectors for matrices in $\mptn_L(G)$?  With the various applications of the Fiedler vector, researching this problem also give answers to the possible partitions, the possible drawings, the possible characteristic sets of a given graph.

The paper is organized as follows.  In Section~\ref{sec:tree}, we characterize all possible Fiedler vectors of a given tree.  As an application, we show that the characteristic set can be anywhere except for a singleton on a leaf.  We also introduce a bijective transformation in Section~\ref{sec:connsub} between the Type II trees and the Type I trees whose characteristic set is a singleton on a degree-$2$ vertex, which simplifies the analysis of the characteristic sets from two types into one type.  In Section~\ref{sec:cycle}, we characterize all possible eigenvectors of a cycle corresponding to the second or the third smallest eigenvalue.  The rest of this section will be devoted to some terminologies and background.

%========================================================
\subsection{Preliminaries}
\label{ssec:prelim}
Let $X$ and $Y$ be finite sets.  We write $\mathbb{R}^X$ for the set of vectors whose entries are indexed by $X$ and $\mathbb{R}^{X\times Y}$ for the set of matrices whose rows and columns are indexed by $X$ and $Y$, respectively.  In particular, any vector in $\mathbb{R}^X$ can be viewed as a function from $X$ to $\mathbb{R}$.  Given a real symmetric matrix $A$, we write $\lambda_k(A)$ for its $k$-th smallest eigenvalue and $\mul_A(\lambda)$ for the multiplicity of $\lambda$ as an eigenvalue of $A$.  

Let $G$ be a simple graph.  A \emph{weight assignment} or \emph{weight vector} on $G$ is a vector $\bw\in\mathbb{R}^{E(G)}$ that is entrywisely positive; that is, it assigns a positive real number to each edge.  A \emph{weighted graph} is a pair $(G,\bw)$ of a simple graph along with some weight assignment.  For convenience, we write the weight of the edge $\{i,j\}$ as   
\[
    \bw(i,j) = \bw(j,i)
\]
instead of $\bw(\{i,j\})$.  The \emph{weighted Laplacian matrix} of the weighted graph $(G,\bw)$ is the symmetric matrix in $\mathbb{R}^{V(G)\times V(G)}$ whose off-diagonal $i,j$-entry is $-\bw(i,j)$ if $\{i,j\}\in E(G)$ and $0$ if $\{i,j\}\notin E(G)$ and whose diagonal $i,i$-entry is $\displaystyle\sum_{k: \{i,k\}\in E(G)}\bw(i,k)$.  Any of such matrix is called a weighted Laplacian matrix of $G$.  

By definition, any weighted Laplacian matrix $A$ of $G$ satisfies $A\bone = \bzero$.  Therefore, $\mptn_L(G)$ is the collection of all weighted Laplacian matrix of $G$.  When each edge receives weight $1$, we say it is an unweighted graph, and the corresponding Laplacian matrix is the classical Laplacian matrix.

The weighted Laplacian matrices of a graph has many well-known properties; see, e.g., \cite{BapatGM14, BLS07}.  Let $G$ be a graph and $A\in\mptn_L(G)$.  It is known that $A$ is positive semidefinite and $\lambda_1(A) = 0$.  Also, the nullity of $A$ is exactly equal to the number of components of $G$.  As a result, $\lambda_2(A) > 0$ if and only if $G$ is connected, so $\lambda_2$ is known as the \emph{algebraic connectivity} \cite{Fiedler73}.  A \emph{Fiedler vector} is an eigenvector of $A$ with respect to $\lambda_2(A)$ \cite{Fiedler75class}, where Fiedler called it the \emph{characteristic valuation}.  Note that both the algebraic connectivity and the Fiedler vector depend on the weight assignment, while we say $\bx$ is a Fiedler vector of $G$ if it is a Fiedler vector of some $A\in\mptn_L(G)$.  Fiedler showed \cite{Fiedler75class} that if $G$ is connected and $\bx = \begin{bmatrix} x_i \end{bmatrix}\in\mathbb{R}^{V(G)}$ is a Fiedler vector of $G$, then for any $c \leq 0$ the induced subgraph of $G$ on $V_{\geq c}$ is connected, where $V_{\geq c} = \{i\in V(G): x_i \geq c\}$.  Therefore, $\{V_{\geq c}, V(G)\setminus V_{\geq c}\}$ gives a reasonable partition of $G$, which leads to many different applications \cite{PSL90, CCSz97, UZ14, SM00}. 

Motivated by the fruitful applications of the Fiedler vector, it is natural to ask how graph structure shapes the potential Fiedler vector among all possible weight assignments.  

\begin{problem}[Inverse Fiedler vector problem of a graph]
\label{prob:ifpg}
Given a graph $G$, what are the possible Fiedler vectors among matrices in $\mptn_L(G)$?
\end{problem}

Various places in the literature show some hints that Fiedler had considered this problem in mind.  
In \cite[Theorem~3.8]{Fiedler75class}, the potential partitions $\{V_{\geq c}, V\setminus V_{\geq c}\}$ are considered.  The Remark in the end of \cite{Fiedler75class} mentioned the potential Fiedler vectors, but no proof was given.  
The goal of this paper is to provide rigorous proofs and a more comprehensive study on this topic.

When $(T,\bw)$ is a weighted tree, its Fiedler vector $\bx = \begin{bmatrix} x_i \end{bmatrix}\in\mathbb{R}^{V(T)}$ has even richer structure.  It is known that exactly one of the following two cases will occur \cite{Fiedler75ac, KNS96}: 
\begin{description}
\item[Type I] Some entries of $\bx$ are zero.  In this case, there is exactly a vertex $i$ with $x_i = 0$ that is adjacent some vertex $j$ with $x_j \neq 0$.  Moreover, for any path in $T$ starting at $i$, the values of $\bx$ along the path is either strictly increasing, strictly decreasing, or constantly zero.  
We say $\{i\}$ is the \emph{characteristic set} of $(T,\bw)$.
\item[Type II] No entry of $\bx$ is zero. In this case, there is exactly an edge $\{i,j\}$ with $x_ix_j < 0$, say $x_i < 0 < x_j$.  Moreover, for any path starting at $i$ without passing $j$, the values of $\bx$ along the path is strictly decreasing; for any path starting at $j$ without passing $i$, the values of $\bx$ along the path is strictly increasing.  
We say $\{i,j\}$ is the \emph{characteristic set} of $(T,\bw)$.
\end{description}
Note that when the algebraic connectivity has multiplicity bigger than one, there are infinitely many choices of $\bx$, but the theory showed that the characteristic set is independent of the choice of the Fiedler vectors \cite{Fiedler75ac, KNS96}.

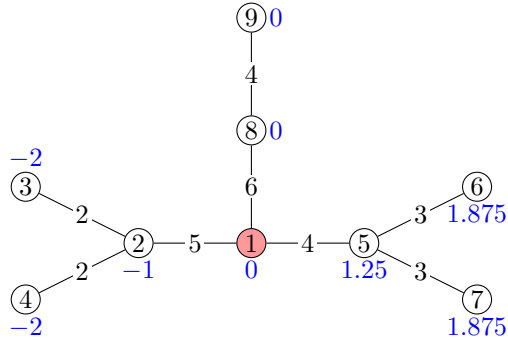
\begin{figure}[h]
\centering
\begin{tikzpicture}[scale=1.5]
\node[selected, label={below:$0$}] (1) at (0, 0) {$1$};
\node[label={below:$-1$}] (2) at (-1, 0) {$2$};
\node[label={above:$-2$}] (3) at (-2, 0.5) {$3$};
\node[label={below:$-2$}] (4) at (-2, -0.5) {$4$};
\node[label={below:$1.25$}] (5) at (1, 0) {$5$};
\node[label={below:$1.875$}] (6) at (2, 0.5) {$6$};
\node[label={below:$1.875$}] (7) at (2, -0.5) {$7$};
\node[label={right:$0$}] (8) at (0, 1) {$8$};
\node[label={right:$0$}] (9) at (0, 2) {$9$};
\draw (3) --node[edge weight]{$2$} (2) --node[edge weight]{$2$} (4);
\draw (6) --node[edge weight]{$3$} (5) --node[edge weight]{$3$} (7);
\draw (2) --node[edge weight]{$5$} (1) --node[edge weight]{$4$} (5);
\draw (1) --node[edge weight]{$6$} (8) --node[edge weight]{$4$} (9);
\end{tikzpicture}
\caption{A weighted tree with a Type I Fiedler vector (marked in blue) and its characteristic set (marked in red).}
\label{fig:t1treechar}
\end{figure}

\begin{figure}[h]
\centering
\begin{tikzpicture}[scale=1.5]
\node[selected, label={below:$-1$}] (1) at (-0.5, 0) {$1$};
\node[label={above:$-2$}] (2) at (-1.5, 0.5) {$2$};
\node[label={below:$-2$}] (3) at (-1.5, -0.5) {$3$};
\node[selected, label={below:$1.25$}] (4) at (0.5, 0) {$4$};
\node[label={above:$1.875$}] (5) at (1.5, 0.5) {$5$};
\node[label={below:$1.875$}] (6) at (1.5, -0.5) {$6$};
\draw (2) --node[edge weight]{$2$} (1) --node[edge weight]{$2$} (3);
\draw (5) --node[edge weight]{$3$} (4) --node[edge weight]{$3$} (6);
\draw (1) --node[edge weight]{$\frac{20}{9}$} (4);
\end{tikzpicture}
\caption{A weighted tree with a Type II Fiedler vector (marked in blue) and its characteristic set (marked in red).}
\label{fig:t2treechar}
\end{figure}
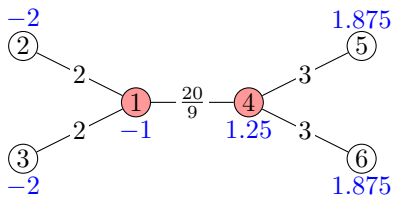

\begin{example}
\label{ex:treechar}
Let $T_1$ and $T_2$ be the trees in Figure~\ref{fig:t1treechar} and \ref{fig:t2treechar}, respectively.  The numbers on the edges are the weight assignments.  Then  
\[
    L_1 = \begin{bmatrix}
    15 & -5 & 0 & 0 & -4 & 0 & 0 & -6 & 0 \\
    -5 & 9 & -2 & -2 & 0 & 0 & 0 & 0 & 0 \\
    0 & -2 & 2 & 0 & 0 & 0 & 0 & 0 & 0 \\
    0 & -2 & 0 & 2 & 0 & 0 & 0 & 0 & 0 \\
    -4 & 0 & 0 & 0 & 10 & -3 & -3 & 0 & 0 \\
    0 & 0 & 0 & 0 & -3 & 3 & 0 & 0 & 0 \\
    0 & 0 & 0 & 0 & -3 & 0 & 3 & 0 & 0 \\
    -6 & 0 & 0 & 0 & 0 & 0 & 0 & 10 & -4 \\
    0 & 0 & 0 & 0 & 0 & 0 & 0 & -4 & 4
    \end{bmatrix}
\]
and 
\[
    L_2 = \begin{bmatrix}
    \frac{56}{9} & -2 & -2 & -\frac{20}{9} & 0 & 0 \\
    -2 & 2 & 0 & 0 & 0 & 0 \\
    -2 & 0 & 2 & 0 & 0 & 0 \\
    -\frac{20}{9} & 0 & 0 & \frac{74}{9} & -3 & -3 \\
    0 & 0 & 0 & -3 & 3 & 0 \\
    0 & 0 & 0 & -3 & 0 & 3
    \end{bmatrix}
\]
are the corresponding weighted Laplacian matrices.  By direct computation, the algebraic connectivity is $1$ with multiplicity $1$ for each of the weighted trees, and the corresponding Fiedler vectors are the blue values by the vertices in Figures~\ref{fig:t1treechar} and \ref{fig:t2treechar}.  Figure~\ref{fig:t1treechar} is an example of the Type I Fiedler vector.  In this example, the characteristic set is $\{1\}$, and for any path starting at $1$, the values on the Fiedler vector is either strictly increasing, strictly decreasing, or constantly zero.  Figure~\ref{fig:t2treechar} is an example of the Type II Fiedler vector.  Here the characteristic set is $\{1,4\}$, and for any path starting at $1$ without passing $4$, the values are strictly decreasing, for any path starting at $4$ without passing $1$, the values are strictly increasing.
\end{example}

Let $G$ be a graph.  An \emph{orientation} of $G$ is a directed graph obtained from $G$ by replacing each undirected edge $\{i,j\}$ with a directed edge, either $(i,j)$ or $(j,i)$.  The \emph{incidence matrix} of an orientation of $G$ is the matrix in $\mathbb{R}^{V(G)\times E(G)}$ such that the $e$-th column has a unique $1$ at the $i$-th entry, a unique $-1$ at the $j$-th entry, and otherwise $0$ if the edge $e = \{i,j\}$ is oriented as $(i,j)$.  An incidence matrix of $G$ is the incidence matrix of some orientation of $G$.  It is known \cite{AM85} that the classical Laplacian matrix of $G$ can be written as $NN\trans$, where $N$ can be any incidence matrix of $G$.  It is straightforward to see that the weighted Laplacian matrix of a weighted graph $(G,\bw)$ can be written as $NWN\trans$, where $W = \diag(\bw)$ is called the \emph{weight diagonal matrix}.

Let $A$ be a matrix in $\mathbb{R}^{X\times Y}$, $\alpha\subseteq X$, and $\beta\subseteq Y$.  The submatrix of $A$ induced on the rows in $\alpha$ and columns in $\beta$ is denoted by $A[\alpha,\beta]$.  The submatrix of $A$ obtained by removing the rows in $\alpha$ and columns in $\beta$ is denoted by $A(\alpha,\beta)$.  When $\alpha = \beta$, we simply write $A[\alpha]$ and $A(\beta)$.  When $\alpha = \beta = \{i\}$, we write $A[i]$ and $A(i)$ to make the notation easier.  Subvectors are defined in a similar way.  Finally, in this paper we use the notation $\bx \oslash \by$ for the entrywise division of two vectors of the same length.  That is, when $\bx = \begin{bmatrix} x_i \end{bmatrix}$ and $\by = \begin{bmatrix} y_i \end{bmatrix}$ are vectors in $\mathbb{R}^X$ with $\by$ nowhere zero, we have $\bx \oslash \by = \begin{bmatrix} \frac{x_i}{y_i} \end{bmatrix}\in\mathbb{R}^X$.  

Throughout the paper, we will use the Perron--Frobenius theorem and the Cauchy interlacing theorem frequently.  The reader may refer to \cite{BHSoG12,BapatGM14} for these standard results.

\section{The inverse Fiedler vector problem of a tree}
\label{sec:tree}

In this section, we provide the complete solution to Problem~\ref{prob:ifpg} when the graph is a tree.  As we have seen in Subsection~\ref{ssec:prelim}, a Fiedler vector of a tree $T$ necessarily have some sign conditions on its entries and the monotonicity following the tree structure.  We will show that these conditions are enough to characterize all Fiedler vectors among matrices in $\mptn_L(T)$.

\begin{definition}
Let $T$ be a tree on $n$ vertices.  A vector $\bx = \begin{bmatrix} x_i \end{bmatrix}\in\mathbb{R}^{V(T)}$ is said to be \emph{Fiedler-like} with respect to $T$ if $\bone\trans\bx = 0$ and one of the following two conditions holds:
\begin{description}
\item[Type I] There is exactly a vertex $i$ with $x_i = 0$ that is adjacent to some vertex $j$ with $x_j \neq 0$.  And for any path in $T$ starting at $i$, the values of $\bx$ along the path is either strictly increasing, strictly decreasing, or constantly zero.  
We say $\{i\}$ is the \emph{characteristic set} of $\bx$.  
\item[Type II] There is exactly an edge $\{i,j\}$ with $x_ix_j < 0$, say $x_i < 0 < x_j$.  And for any path starting at $i$ without passing $j$, the values of $\bx$ along the path is strictly decreasing; for any path starting at $j$ without passing $i$, the values of $\bx$ along the path is strictly increasing.  
We say $\{i,j\}$ is the \emph{characteristic set} of $(T,\bw)$.
\end{description}
\end{definition}

\begin{theorem}
\label{thm:treemain}
Let $T$ be a tree.  Then $\bx$ is a Fiedler vector of $T$ if and only if $\bx$ is Fiedler-like with respect to $T$.
\end{theorem}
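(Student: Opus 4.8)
The "only if" direction is immediate from the known structure theory of Fiedler vectors of trees recalled in Subsection~\ref{ssec:prelim}: any Fiedler vector of a weighted tree $(T,\bw)$ satisfies $\bone\trans\bx = 0$ (orthogonality to the kernel) and is of Type I or Type II with the stated monotonicity, hence is Fiedler-like. So the entire content lies in the "if" direction: given a Fiedler-like vector $\bx$ with respect to $T$, I must construct a weight assignment $\bw$ such that $\bx$ is a Fiedler vector of the weighted Laplacian $L = NWN\trans$. The plan is to first produce a matrix $A \in \mptn_L(T)$ and a value $\lambda > 0$ with $A\bx = \lambda\bx$, and then to verify $\lambda = \lambda_2(A)$, i.e.\ that $\lambda$ is the algebraic connectivity.

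For the construction, I would root the tree appropriately (at the characteristic vertex in Type I, or subdivide/orient across the characteristic edge in Type II) and determine the edge weights greedily from the leaves inward. Concretely, fixing $\lambda = 1$ (rescaling $\bx$ is harmless), the eigen-equation $(L\bx)_i = x_i$ at a vertex $i$ reads $\sum_{j\sim i} \bw(i,j)(x_i - x_j) = x_i$. Processing vertices in a suitable order — from the leaves toward the characteristic set — each equation determines one new edge weight in terms of previously assigned ones: at a leaf $\ell$ with neighbor $p$, $\bw(\ell,p)(x_\ell - x_p) = x_\ell$ forces $\bw(\ell,p) = x_\ell/(x_\ell - x_p)$, and the monotonicity in the definition of Fiedler-like guarantees $x_\ell - x_p$ and $x_\ell$ have the same sign (so the weight is positive); inductively, at an internal vertex all incident "downward" weights are known, the equation pins down the unique "upward" weight, and again the sign pattern forced by strict monotonicity along paths makes it positive. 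The vertices at the characteristic set (or the subdivision vertex in Type II) are where the bookkeeping must balance: there the remaining equations should be automatically satisfied, which is exactly where $\bone\trans\bx = 0$ gets used — summing all eigen-equations gives $\bone\trans L\bx = 0 = \lambda\,\bone\trans\bx$, so one equation is redundant. Handling the zero-valued branches in Type I needs a small separate remark: on a subtree that is constantly zero, any positive weights work, and the interface vertex $i$ has $x_i = 0$ so its equation reads $\sum_{j\sim i}\bw(i,j)(0 - x_j) = 0$, which is one linear constraint on the interface weights that can be met with positive values because the nonzero neighbors split into strictly-positive and strictly-negative groups.

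Once $A\bx = \lambda\bx$ with $\lambda > 0$ is in hand, I must show $\lambda = \lambda_2(A)$. Since $A$ is a weighted Laplacian of the connected tree $T$, $\lambda_1(A) = 0$ is simple with eigenvector $\bone$, and $\lambda \geq \lambda_2(A)$ automatically. The reverse inequality $\lambda \leq \lambda_2(A)$ is the crux. The cleanest route is via the sign/nodal structure: a vector whose support graph behaves like $\bx$ (two "sign classes" each inducing a connected subgraph, meeting at the characteristic set) is incompatible with being an eigenvector for an eigenvalue strictly above $\lambda_2$, by a nodal-domain / Fiedler-type argument. Equivalently, I can use the principal submatrix viewpoint: deleting the characteristic vertex (Type I) or contracting/deleting across the characteristic edge splits $T$ into two subtrees, and on each piece $\bx$ restricts to a Perron-type (nowhere-zero, one-signed) vector of the corresponding Dirichlet-type submatrix $A(\alpha)$; by Perron--Frobenius it is the eigenvector for the smallest eigenvalue of that submatrix, and Cauchy interlacing then forces $\lambda$ to be at most $\lambda_2(A)$. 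This is the step I expect to be the main obstacle — getting the interlacing bookkeeping exactly right, especially in the Type~II case and in the presence of zero branches in Type~I, where one must be careful about which submatrix is irreducible and which eigenvalue of it $\bx$ realizes. Assembling these pieces yields $\lambda = \lambda_2(A)$, so $\bx$ is a Fiedler vector of $T$, completing the proof.
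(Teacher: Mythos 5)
Your overall architecture is the same as the paper's: the ``only if'' direction is quoted from the known structure theory, and the ``if'' direction constructs the weights branch-by-branch from the eigen-equation and then locates the eigenvalue by Perron--Frobenius plus Cauchy interlacing. Indeed, your leaf-inward recursion solves exactly the linear system the paper solves in closed form ($\bw = \lambda(N_r^{-1}\bx)\oslash(N_r\trans\bx)$, i.e.\ weight $=$ (sum of $\bx$ over the far subtree)$/$(difference across the edge)), so the construction is the same in substance. However, there are two genuine gaps in the verification that $\lambda=\lambda_2(A)$.

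First, your claim that on a constantly-zero branch ``any positive weights work'' is false for the statement being proved. Arbitrary positive weights do make $A\bx=\lambda\bx$ hold there, but they need not make $\lambda$ the \emph{second smallest} eigenvalue: if a zero branch carries very small weights, the Dirichlet (bottleneck) matrix of that branch has smallest eigenvalue below $\lambda$, the algebraic connectivity of $(T,\bw)$ drops below $\lambda$, and $\bx$ becomes an eigenvector for $\lambda_3$ or higher. (Take $T$ a path $a$--$r$--$b$ with a long pendant path at $r$, $\bx = -1,0,+1$ on $a,r,b$ and $0$ on the pendant path, unit weights on $\{a,r\},\{b,r\}$ and weight $\varepsilon\to 0$ on the pendant path.) Your own interlacing argument exposes the problem: after deleting $r$, the restriction of $\bx$ to a zero branch is the zero vector and says nothing about $\lambda_1$ of that block, so you cannot conclude $\lambda_1(A(r))\ge\lambda$. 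The paper closes this by \emph{requiring} each zero branch to be weighted so that its Dirichlet matrix has Perron value $\mu_i\ge\lambda$; some such constraint (or an explicit ``scale the zero-branch weights up'' step) is indispensable. Second, in the Type~II case the step you defer is not merely bookkeeping: after deleting the vertex $p$ of the characteristic edge, $\bx[V_+]$ is \emph{not} an eigenvector of the block $A[V_+]$ (the eigen-equation at $q$ involves $x_p$), but only of $A[V_+]$ perturbed by a positive diagonal correction at $q$. One must then invoke monotonicity of the Perron root under that perturbation to get $\lambda_1(A[V_+])<\lambda<\lambda_2(A[V_+])$ and run interlacing twice, which is exactly what the paper does in Theorem~\ref{thm:typeiif}; as written, your sketch asserts the false intermediate statement and leaves the true one unproved.
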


By \cite{Fiedler75ac, KNS96}, every Fiedler vector of a tree has to be Fiedler-like.  Conversely, we will show that for every Fiedler-like vector $\bx$ there is a matrix $A\in\mptn_L(T)$ such that $\bx$ is its Fiedler vector in Theorems~\ref{thm:typeif} and \ref{thm:typeiif}, which take care of Type I and Type II Fiedler-like vectors, respectively.  Thus, Theorem~\ref{thm:treemain} will be immediate by then.

Let $T$ be a tree, $i\in V(T)$, and $A\in\mptn_L(T)$.  The inverse of $A(i)$ is called the \emph{bottleneck matrix} at $i$.  The bottleneck matrix is known to be entrywisely positive and is proved to be a powerful tool for finding the location of the characteristic set \cite{KNS96}.  In the following, we will take a detour to focus on eigenvector of $A(i)$ with respect to the smallest eigenvalue.  Afterward our algorithm for solving the inverse Fiedler vector problem will rely on these results.

\subsection{Dirichlet matrices and its Perron vectors}
Let $G$ be a graph and $\partial V$ a subset of $V$.  A matrix of the form $A(\partial V)$ with $A\in\mptn_L(G)$ is called a \emph{Dirichlet matrix} of $G$ with the boundary $\partial V$, which have been used to model the vibration with given boundary condition on $\partial V$ \cite{BLS07}.  

Here we focus on the Dirichlet matrix of a tree with its boundary on a leaf.  Let $T$ be a tree, $r$ a leaf of $T$, and $A\in\mptn_L(T)$ the weighted Laplacian matrix of $(T,\bw)$.  Given $\bx = \begin{bmatrix} x_i \end{bmatrix}\in\mathbb{R}^{V(T - r)}$, recall that the quadratic form of the Dirichlet matrix is  
\[
    \bx\trans A(r) \bx = \bw(r,u)x_u^2 + \sum_{\substack{\{i,j\}\in E(T) \\ \{i,j\}\neq \{r,u\}}} \bw(i,j)(x_i - x_j)^2 \geq 0,
\]
where $u$ is the unique neighbor of $r$.  This quantity is never zero unless $x_u = 0$ and every entry of $\bx$ is the same, which means $\bx = \bzero$.  Therefore, $A(r)$ is a positive definite matrix.  

On the other hand, observe that $cI - A(r)$ is a nonnegative irreducible matrix when $c$ is large enough, so the smallest eigenvalue of $A(r)$ is simple by the Perron--Frobenius theorem.  Therefore, we call the smallest eigenvalue of $A(r)$ as the \emph{Perron value} and call its corresponding eigenvector as \emph{Perron vector}.  

By the Perron--Frobenius theorem, the Perron vector can be chosen to be entrywisely positive.  We will see that the values of the Perron vector is strictly increasing along any path starting at $u$.  Moreover, any such vector $\bx$ leads to a unique Dirichlet matrix whose Perron vector is $\bx$.  

\begin{figure}[h]
\centering
\begin{tikzpicture}[scale=1.5]
\node[selected] (1) at (0, 0) {$1$};
\node (2) at (1, 0) {$2$};
\node (3) at (2, 0.5) {$3$};
\node (4) at (2, -0.5) {$4$};
\draw (3) -- (2) -- (4);
\draw (2) -- (1);
\end{tikzpicture}
\caption{A tree with its boundary on a leaf $r = 1$}
\label{fig:treer}
\end{figure}
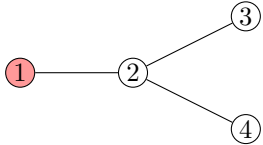

\begin{example}
\label{ex:dirchwtop}
Let $T$ be the graph as in Figure~\ref{fig:treer} with the boundary $r = 1$.  With the weight assignment  
\[
    \bw(1,2) = 5 \text{ and } \bw(2,3) = \bw(3,4) = 2,
\]
the Dirichlet matrix of $T$ with the boundary $r$ is  
\[
    A(r) = \begin{bmatrix}
    9 & -2 & -2 \\
    -2 & 2 & 0 \\
    -2 & 0 & 2
    \end{bmatrix}.
\]
By direct computation, $A(r)$ is positive definite with the smallest eigenvalues $\lambda = 1$ and the Perron vector $\bx = (x_2,x_3,x_4)\trans = (1,2,2)\trans$.  For any path in $T - r$ starting at the unique neighbor $2$ of $r$, the values of $\bx$ along the path are strictly increasing.  
\end{example}

\begin{example}
\label{ex:dirchptow}
Let $T$ be the tree in Figure~\ref{fig:treer} with the boundary $r = 1$, where the weight on each edge is undetermined.  Given a vector $\bx = (x_2,x_3,x_4)\trans = (1,2,2)\trans$, we would like to find a Dirichlet matrix $A(r)$ with $A\in\mptn_L(T)$ such that $\bx$ is its Perron vector.  Note that we may assume the Perron value of $A(r)$ is $\lambda = 1$ since we may replace $A(r)$ with $\frac{1}{\lambda}A(r)$.

By ordering the edges as $e_1 = \{1,2\}$, $e_2 = \{2,3\}$, $e_3 = \{2,4\}$, we may assume $\bw = (w_1, w_2, w_3)\trans$ and $W = \diag(\bw)$ are the weight vector and the weight diagonal matrix, respectively.  Let $N$ be the incidence matrix of $T$ with respect to the orientation where every edge is pointing to $r$.  Then any $A\in\mptn_L(G)$ can be written as $A = NWN\trans$ and $A(r)$ can be written as  
\[
    A(r) = N_rWN_r\trans = 
    \begin{bmatrix}
    1 & -1 & -1 \\
    0 & 1 & 0 \\
    0 & 0 & 1
    \end{bmatrix}
    \begin{bmatrix}
    w_1 & 0 & 0 \\
    0 & w_2 & 0 \\
    0 & 0 & w_3
    \end{bmatrix}
    \begin{bmatrix}
    1 & 0 & 0 \\
    -1 & 1 & 0 \\
    -1 & 0 & 1
    \end{bmatrix},
\]
where $N_r$ is obtained from $N$ by removing the $r$-th row.  Observe that $N_r$ is invertible.  With the assumption $\lambda = 1$, the equation $A(r)\bx = \lambda \bx$ becomes $N_rWN_r\trans \bx = \bx$ and $W(N_r\trans\bx) = (N_r^{-1}\bx)$.  By direct computation, 
\[
    N_r\trans\bx = 
    \begin{bmatrix} 1 \\ 1 \\ 1 \end{bmatrix}
    \text{ and }
    N_r^{-1}\bx = 
    \begin{bmatrix} 5 \\ 2 \\ 2 \end{bmatrix}.
\]
Thus, there is a unique solution $w_1 = 5$, $w_2 = 2$, and $w_3 = 2$.  Therefore, with $\lambda = 1$, the Dirichlet matrix in Example~\ref{ex:dirchwtop} is the only matrix having $\bx$ as the Perron vector.  
\end{example}

In the following, we describe the general strategy in solving the Dirichlet matrix from a Perron vector.  We also provide the combinatorial interpretations of $N_r\trans\bx$ and $N_r^{-1}\bx$.  

\begin{definition}
Let $T$ be a tree with its boundary on a leaf $r$, where $u$ is the unique neighbor of $r$.  Let $N$ be the incidence matrix of $T$ with respect to the orientation where every edge is pointing to $r$.  The \emph{Dirichlet incidence matrix} $N_r$ is obtained from $N$ by removing its $r$-th row.  An entrywisely positive vector $\bx\in\mathbb{R}^{V(T - r)}$ is said to be \emph{strictly increasing} if for any path in $T - r$ starting at $u$, the values of $\bx$ is strictly increasing.
\end{definition}

\begin{proposition}
\label{prop:nrtx}
Let $T$ be a tree with its boundary on a leaf $r$, where $u$ is the unique neighbor of $r$.  Let $N_r$ be the Dirichlet incidence matrix and $\bx = \begin{bmatrix} x_i \end{bmatrix}\in\mathbb{R}^{V(T - r)}$.  Then the $e$-th entry of $N_r\bx$ is
\[
    (N_r\trans\bx)_e = \begin{cases}
        x_u & \text{ if } e = \{r,u\} \text{ and }, \\
        x_j - x_i & \text{ if } e = \{i,j\} \text{ and $i$ is on the path between $j$ and $r$}.
    \end{cases}
\]
\end{proposition}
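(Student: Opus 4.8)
The plan is to unwind the definitions of the chosen orientation, of the incidence matrix $N$, and of the row-deletion that produces $N_r$; essentially everything is bookkeeping about which entry of a column of $N$ survives the removal of row $r$. Before that, I would record the small combinatorial fact that makes the statement meaningful: since $T$ is a tree, for every edge $e=\{i,j\}$ exactly one of $i,j$ lies on the path joining the other endpoint to $r$. (If neither did, then concatenating the $i$-to-$r$ path, the $r$-to-$j$ path, and the edge $e$ would produce a cycle, contradicting that $T$ is a tree.) This justifies the phrase ``$i$ is on the path between $j$ and $r$'', and it pins down what the orientation ``where every edge points to $r$'' means: the edge $\{i,j\}$ with $i$ between $j$ and $r$ is directed as $(j,i)$, since traversing it from $j$ to $i$ moves strictly closer to $r$.

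With the orientation fixed, I would read off the columns of $N$. By the definition of the incidence matrix, the column of $N$ indexed by the edge $e=\{i,j\}$ oriented $(j,i)$ has a $+1$ in row $j$, a $-1$ in row $i$, and zeros elsewhere. Transposing, the $e$-th row of $N\trans$ is exactly this vector, so $(N\trans\bv)_e = v_j - v_i$ for every $\bv\in\mathbb{R}^{V(T)}$.

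It remains to pass from $N$ to $N_r$ and split into the two cases. If $e=\{i,j\}$ is any edge other than $\{r,u\}$, then neither $i$ nor $j$ can be $r$ (the only edge at the leaf $r$ is $\{r,u\}$), so deleting row $r$ from $N$ leaves the $e$-th column untouched; hence $(N_r\trans\bx)_e = x_j-x_i$ with $i$ the endpoint between $j$ and $r$, as claimed. If instead $e=\{r,u\}$, the orientation $(u,r)$ gives the $e$-th column of $N$ a $+1$ in row $u$ and a $-1$ in row $r$; deleting row $r$ removes the $-1$ and leaves only the $+1$ in row $u$, so $(N_r\trans\bx)_e = x_u$. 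Since $\{r,u\}$ is the unique edge incident to $r$, these two cases are exhaustive, which completes the argument.

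I do not anticipate a genuine obstacle: the computation is purely definitional. The one place that calls for a moment's care is the very first step --- confirming that the ``between'' relation used in the statement is well defined and consistent with the prescribed orientation --- and, relatedly, making sure the boundary edge $\{r,u\}$ is treated separately precisely because row $r$ is the one removed from $N$.
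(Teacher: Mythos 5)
Your argument is correct and follows essentially the same route as the paper's proof, which likewise just reads off the rows of $N_r\trans$ from the orientation toward $r$ and notes that the $r$-th row deletion only affects the column of $\{r,u\}$. The extra remarks you include (well-definedness of the ``between'' relation and the explicit identification of the orientation as $(j,i)$) are consistent with the paper's sign convention for incidence matrices and with Example~\ref{ex:dirchptow}, so there is nothing to correct.
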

\begin{proof}
When $e = \{r,u\}$, the $e$-th row of $N_r\trans$ has a unique $1$ at the $u$-th entry and is $0$ otherwise.  When $e = \{i,j\}$ with $i$ on the path between $j$ and $r$, the $e$-th row of $N_r\trans$ has a unique $1$ at the $j$-th entry and a unique $-1$ at the $i$-th entry and is $0$ otherwise.  Then the desired formula follows from direct computation.  
\end{proof}

The notion of the path matrix can be found in, e.g., \cite[Section~2.3]{BapatGM14}.  Note that the original definition of a path matrix does not require $r$ being a leaf.  Here we only define the path matrix on a specific case where $r$ is a leaf and all edges are pointing to $r$.

\begin{definition}
Let $T$ be a tree with its boundary on a leaf $r$.  The \emph{path matrix} of $T$ with respect to $r$ is a matrix in $\mathbb{R}^{E(T)\times V(T - r)}$ such that the $e,i$-entry is $1$ if $e$ is on the path from $i$ to $r$ and $0$ otherwise.
\end{definition}

\begin{theorem}
{\rm \cite[Theorem~2.10]{BapatGM14}}
\label{thm:pathmatrix}
Let $T$ be a tree with its boundary on a leaf $r$.  Let $N_r$ be the Dirichlet incidence matrix and $P$ the path matrix.  Then $N_r^{-1} = P$.  
\end{theorem}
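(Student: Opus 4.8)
The plan is to verify the identity directly by computing $N_r P$ column by column and showing it equals the identity matrix $I$ indexed by $V(T-r)$. I would first record a counting observation: since $T$ is a tree on $n$ vertices, $|E(T)| = |V(T-r)| = n-1$, so $N_r$ and $P$ are square matrices of the same order; hence once $N_r P = I$ is established it follows automatically that $P = N_r^{-1}$, so only this one identity needs to be proved.

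Next I would fix a vertex $b \in V(T-r)$ and consider the unique path $b = v_0, v_1, \dots, v_k = r$ in $T$ from $b$ to the boundary leaf $r$, with edges $f_m = \{v_{m-1}, v_m\}$ for $1 \le m \le k$ (here $k \ge 1$ because $b \ne r$). By the definition of the path matrix, the $b$-th column of $P$ is exactly the indicator vector of $\{f_1, \dots, f_k\} \subseteq E(T)$, so the $b$-th column of $N_r P$ is the sum of the columns of $N_r$ indexed by $f_1, \dots, f_k$. The heart of the argument is then to identify these columns and sum them. Since $v_m$ is strictly closer to $r$ than $v_{m-1}$ along the path, in the orientation of $T$ in which every edge points toward $r$ the edge $f_m$ is oriented $(v_{m-1}, v_m)$, so the $f_m$-th column of the full incidence matrix $N$ carries $+1$ in row $v_{m-1}$ and $-1$ in row $v_m$. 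Deleting row $r = v_k$ to form $N_r$ changes only the column of the last edge $f_k$, whose $N_r$-column is just the standard basis vector $\be_{v_{k-1}}$ of $\mathbb{R}^{V(T-r)}$; for $1 \le m \le k-1$ the $f_m$-th column of $N_r$ is $\be_{v_{m-1}} - \be_{v_m}$. Summing over $m$ gives a telescoping sum
\[
\sum_{m=1}^{k-1}\bigl(\be_{v_{m-1}} - \be_{v_m}\bigr) + \be_{v_{k-1}} = \be_{v_0} = \be_b ,
\]
so the $b$-th column of $N_r P$ is $\be_b$. As $b$ was arbitrary, $N_r P = I$, and the theorem follows.

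I do not expect any genuine obstacle here: the computation is short and elementary. The two places that need care are the sign/orientation bookkeeping (deciding which endpoint of $f_m$ receives the $+1$ in $N_r$) and the effect of deleting the boundary row $r$, which truncates the telescoping sum at the edge incident to $r$ in exactly the way needed so that the total is $\be_{v_0}$ rather than $\be_{v_0} - \be_{v_k}$. A symmetric alternative would be to verify $P N_r = I$ instead, working with columns of $N_r$ indexed by edges, but the vertex-indexed computation above seems the most transparent.
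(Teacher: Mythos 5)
Your proof is correct. The paper itself gives no proof of this statement---it is quoted from \cite[Theorem~2.10]{BapatGM14}---so there is nothing internal to compare against; your column-by-column telescoping verification of $N_rP = I$ is a valid, self-contained argument, and your orientation bookkeeping ($+1$ at the endpoint farther from $r$, $-1$ at the nearer one, with the $r$-row deleted truncating the telescope at the edge incident to $r$) agrees with the paper's conventions as stated in Proposition~\ref{prop:nrtx} and illustrated in its $3\times 3$ example. This is essentially the standard proof of the cited result, merely organized by the product $N_rP$ rather than $PN_r$.
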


\begin{example}
Let $T$ be the tree in Figure~\ref{fig:treer} with the boundary $r = 1$.  Then  
\[
    N_r = \begin{bmatrix}
    1 & -1 & -1 \\
    0 & 1 & 0 \\
    0 & 0 & 1
    \end{bmatrix}
    \text{ and }
    P = \begin{bmatrix}
    1 & 1 & 1 \\
    0 & 1 & 0 \\
    0 & 0 & 1
    \end{bmatrix}, 
\]
where the rows of $P$ are indexed by $e_1 = \{1,2\}$, $e_2 = \{2,3\}$, $e_3 = \{2,4\}$ and the columns are indexed by $2$, $3$, $4$.  By direct computation, we have $N_rP = I$.  
\end{example}

\begin{corollary}
\label{cor:ninvx}
Let $T$ be a tree with its boundary on a leaf $r$, where $u$ is the unique neighbor of $r$.  Let $N_r$ be the Dirichlet incidence matrix and $\bx = \begin{bmatrix} x_i \end{bmatrix}\in \mathbb{R}^{V(T - r)}$.  Then the $e$-th entry of $N_r^{-1}\bx$ is  
\[
    (N_r^{-1}\bx)_e = \sum_i x_i,
\]
where the sum is over all vertices on the component of $T - e$ not containing $r$.
\end{corollary}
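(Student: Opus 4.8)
The plan is to combine Proposition~\ref{prop:nrtx} (really, we will use Theorem~\ref{thm:pathmatrix} directly) with a straightforward edge-by-edge unpacking of the matrix product $P\bx$. By Theorem~\ref{thm:pathmatrix} we have $N_r^{-1} = P$, the path matrix, so $(N_r^{-1}\bx)_e = (P\bx)_e = \sum_{i\in V(T-r)} P_{e,i}\,x_i$. By the definition of the path matrix, $P_{e,i} = 1$ precisely when the edge $e$ lies on the unique $i$-to-$r$ path in $T$, and $P_{e,i}=0$ otherwise; hence $(P\bx)_e = \sum_{i} x_i$ where the sum ranges over exactly those vertices $i$ whose path to $r$ uses $e$.

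The only thing left is to identify that index set with ``the vertices on the component of $T-e$ not containing $r$.'' For this I would argue as follows. Write $e = \{a,b\}$ and let $T_1, T_2$ be the two components of $T - e$, with $r \in V(T_1)$, so $T_2$ is the component not containing $r$; say $b \in V(T_2)$ and $a \in V(T_1)$. For a vertex $i$, the $i$-to-$r$ path in $T$ uses the edge $e$ if and only if $i$ and $r$ lie in different components of $T-e$ (removing an edge of a path that contains it disconnects its endpoints, and conversely a path avoiding $e$ stays within one component of $T-e$). Thus $P_{e,i}=1 \iff i \in V(T_2)$, which is exactly the claimed index set. Substituting into the previous display gives $(N_r^{-1}\bx)_e = \sum_{i \in V(T_2)} x_i$, as desired.

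I do not expect any real obstacle here; this is essentially a bookkeeping argument once Theorem~\ref{thm:pathmatrix} is invoked, and the only point requiring a sentence of care is the equivalence ``$e$ is on the $i$-to-$r$ path $\iff$ $i$ is separated from $r$ by deleting $e$,'' which is a standard fact about trees (a tree has a unique path between any two vertices, and an edge of that path is a cut edge separating its endpoints). One could even phrase the whole proof in one line: ``By Theorem~\ref{thm:pathmatrix}, $(N_r^{-1}\bx)_e = (P\bx)_e = \sum_i \{x_i : e \text{ on the } i\text{-}r \text{ path}\}$, and $e$ lies on the $i$-to-$r$ path exactly when $i$ lies in the component of $T-e$ not containing $r$.'' If a cleaner exposition is preferred, the argument can instead be built on Proposition~\ref{prop:nrtx} by noting $P = N_r^{-1}$ and checking $N_r P = I$ column-by-column, but routing through Theorem~\ref{thm:pathmatrix} is shorter and the paper has already stated it.
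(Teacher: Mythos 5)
Your proposal is correct and follows essentially the same route as the paper: invoke Theorem~\ref{thm:pathmatrix} to replace $N_r^{-1}$ by the path matrix $P$, then identify the support of the $e$-th row of $P$ with the vertex set of the component of $T-e$ not containing $r$. The only difference is that you spell out the cut-edge equivalence that the paper leaves as an assertion, which is fine but not a different argument.
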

\begin{proof}
By Theorem~\ref{thm:pathmatrix}, $N_r^{-1}\bx = P\bx$, where $P$ is the path matrix of $T$ with respect to $r$.  For each $e$, the graph $T - e$ contains exactly two components, one containing $r$ while the other does not.  The $e$-th row of $P$ has $1$'s on the entries corresponding to vertices in the component of $T - e$ not containing $r$.  By direct computation, the desired formula follows.  
\end{proof}

With these formulas, we are able to construct the Dirichlet matrix with a given Perron vector.  

\begin{theorem}
\label{thm:dirichletbi}
Let $T$ be a tree with its boundary on a leaf $r$, where $N_r$ is its Dirichlet incidence matrix.  A vector $\bx\in\mathbb{R}^{V(T - r)}$ is the Perron vector of some Dirichlet matrix $A(r)$ with $A\in\mptn_L(T)$ if and only if $\bx$ is strictly increasing on $T$ with the boundary $r$.  Moreover, given any strictly increasing vector $\bx$ and a positive $\lambda$, the matrix $N_rWN_r\trans$ is the unique Dirichlet matrix with Perron value $\lambda$ and Perron vector $\bx$, where $W = \diag(\bw)$ and $\bw = \lambda(N_r^{-1}\bx) \oslash (N_r\trans\bx)$.
\end{theorem}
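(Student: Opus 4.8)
The statement packages three claims --- the ``only if'' direction, the ``if'' direction together with the explicit formula for $\bw$, and uniqueness --- and the plan is to treat them in that order, since the combinatorial content sits entirely in the first. Throughout I would use that $A(r) = N_rWN_r\trans$ whenever $A = NWN\trans\in\mptn_L(T)$ with $N$ orienting every edge toward $r$ (deleting row and column $r$ from $NWN\trans$ just deletes row $r$ from $N$), that $A(r)$ is positive definite, and that for large $c$ the matrix $cI - A(r)$ is nonnegative and irreducible, so the smallest eigenvalue of $A(r)$ is simple and is the only eigenvalue admitting a positive eigenvector.

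For the ``only if'' direction, suppose $\bx$ is the Perron vector of a Dirichlet matrix $A(r)$, where $A\in\mptn_L(T)$ is the weighted Laplacian of $(T,\bw)$; we may take $\bx$ entrywise positive. Root $T$ at $r$, so that $u$ becomes the root of $T - r$, and for $j\neq u$ write $p_j$ for the parent of $j$ and $C_j$ for its set of children. Reading off coordinate $j$ of $A(r)\bx = \lambda\bx$ (using that $j$ is not adjacent to $r$) yields
\[
    \bw(j,p_j)(x_j - x_{p_j}) + \sum_{c\in C_j}\bw(j,c)(x_j - x_c) = \lambda x_j,
\]
equivalently $\bw(j,p_j)(x_j - x_{p_j}) = \lambda x_j + \sum_{c\in C_j}\bw(j,c)(x_c - x_j)$. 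I would then induct on $j$ from the leaves of $T - r$ upward: if $j$ is a leaf then $C_j=\emptyset$ and the right-hand side equals $\lambda x_j>0$; if $j$ is internal, the inductive hypothesis applied to each child $c$ (whose parent is $j$) gives $x_c > x_j$, so the right-hand side is again positive. In either case $x_j > x_{p_j}$. Since a path in $T - r$ starting at $u$ descends monotonically in this rooted tree, each of its edges is such a parent--child edge, so $\bx$ is strictly increasing.

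For the ``if'' direction, given a strictly increasing $\bx$ and $\lambda>0$, I would set $\bw = \lambda(N_r^{-1}\bx)\oslash(N_r\trans\bx)$ and first check it is well defined and positive: by Proposition~\ref{prop:nrtx} the entries of $N_r\trans\bx$ are $x_u$ together with the differences $x_j - x_i$ along edges, all positive because $\bx$ is strictly increasing, while by Corollary~\ref{cor:ninvx} the entries of $N_r^{-1}\bx$ are sums of entries of the positive vector $\bx$, hence positive. So $W=\diag(\bw)$ is a genuine weight diagonal matrix, $A := NWN\trans$ is the weighted Laplacian of $(T,\bw)$ and thus lies in $\mptn_L(T)$, and $A(r) = N_rWN_r\trans$. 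The defining identity of $\bw$ is exactly $W(N_r\trans\bx) = \lambda\, N_r^{-1}\bx$; left-multiplying by $N_r$ gives $A(r)\bx = \lambda\bx$, so $\bx$ is a positive eigenvector of $A(r)$ with eigenvalue $\lambda$. By the remarks above, a positive eigenvector of $A(r)$ belongs to its smallest eigenvalue, hence $\lambda$ is the Perron value and $\bx$ the Perron vector of $A(r)$; taking, e.g., $\lambda=1$ then shows every strictly increasing vector occurs this way.

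Finally, for uniqueness, if $N_rW'N_r\trans$ with $W'=\diag(\bw')$ and $\bw'$ entrywise positive is any Dirichlet matrix of $T$ with boundary $r$ having Perron value $\lambda$ and Perron vector $\bx$, then $N_rW'N_r\trans\bx=\lambda\bx$ forces $W'(N_r\trans\bx)=\lambda\, N_r^{-1}\bx$, and since $N_r\trans\bx$ is nowhere zero the vector $\bw'$ is determined entrywise and must equal $\bw$. The step I expect to require the most care is the induction in the ``only if'' direction: one must order the vertices so that the inequality $x_c > x_j$ for every child $c$ of $j$ is already in hand before the $j$-th eigen-equation is invoked. Everything else reduces to direct matrix computations built on the formulas of Proposition~\ref{prop:nrtx} and Corollary~\ref{cor:ninvx}.
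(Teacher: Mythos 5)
Your proof is correct, and two of its three parts---the construction of $\bw$ in the converse direction and the uniqueness argument---coincide with the paper's. Where you diverge is the ``only if'' direction: the paper disposes of it in one line by rewriting $N_rWN_r\trans\bx=\lambda\bx$ as $WN_r\trans\bx=\lambda N_r^{-1}\bx$, noting that $N_r^{-1}\bx$ is entrywise positive by Corollary~\ref{cor:ninvx} (each entry is a sum of entries of the positive vector $\bx$ over a subtree), and dividing by the positive diagonal of $W$ to conclude that $N_r\trans\bx$ is entrywise positive, i.e.\ that $\bx$ is strictly increasing via Proposition~\ref{prop:nrtx}. You instead run a leaves-to-root induction on the coordinate equations of $A(r)\bx=\lambda\bx$, establishing $x_j>x_{p_j}$ edge by edge; this is sound, and the ordering concern you flag is handled correctly, since the statement for $j$ only invokes the statement for the children of $j$, which sit strictly lower in the leaves-upward order. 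The two arguments are really the same identity viewed from two sides: unrolling your recursion gives $\bw(j,p_j)(x_j-x_{p_j})=\lambda\sum_i x_i$ with the sum over the subtree hanging below $j$, which is exactly the $e$-th entry of $WN_r\trans\bx=\lambda N_r^{-1}\bx$ as computed by Corollary~\ref{cor:ninvx}. Your route is more elementary for this direction in that it does not need the path-matrix formula $N_r^{-1}=P$ (though you still use it for the converse); the paper's route is shorter and makes visible that both directions of the theorem reduce to the sign analysis of the single equation $W(N_r\trans\bx)=\lambda N_r^{-1}\bx$.
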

\begin{proof}
Let $A(r)$ be a Dirichlet matrix with $A\in\mptn_L(T)$.  Let $\bx$ be a Perron vector of $A(r)$, where we chose $\bx$ to be entrywisely positive.  We first show that $\bx$ is strictly increasing.  Note that $A(r) = N_rWN_r\trans$ for some weight diagonal matrix $W$.  Let $\lambda > 0$ be the Perron value.  Then $N_rWN_r\trans\bx = \lambda\bx$, which implies $WN_r\trans\bx = \lambda N_r^{-1}\bx$.  Since $\bx$ is entrywisely positive, $N_r^{-1}\bx$ is entrywisely positive as well by Corollary~\ref{cor:ninvx}.  Since $W$ is a diagonal matrix with all diagonal entries positive, we have $N_r\trans\bx$ is entrywisely positive, which implies $\bx$ is strictly increasing by Proposition~\ref{prop:nrtx}.

Conversely, given a strictly increasing vector $\bx$, we may solve the equation $N_rWN_r\trans\bx = \lambda\bx$ for $W$ as follows.  Since $\bx$ is strictly increasing, $N_r\trans\bx$ is entrywisely positive by Proposition~\ref{prop:nrtx}.  Since $\bx$ is entrywisely positive and $\lambda$ is positive, $\lambda N_r^{-1}\bx$ is entrywisely positive by Corollary~\ref{cor:ninvx}.  Since $W = \diag(\bw)$ is a diagonal matrix, the only solution is $\bw = \lambda(N_r^{-1}\bx) \oslash (N_r\trans\bx)$, and the previous discussion guarantees that $\bw$ is entrywisely positive, which makes it a valid weight vector.  Moreover, an entrywisely positive eigenvector necessarily corresponds to the smallest eigenvalue of $A(r)$ by the Perron--Frobenius theorem.  
\end{proof}

Thus, the set of Dirichlet matrices and the set of pairs $(\lambda, \vspan(\{\bx\}))$ with $\lambda > 0$ and $\bx$ strictly increasing are in bijective relation.

\begin{corollary}
\label{cor:dirunique}
Let $T$ be a tree with its boundary on a leaf $r$.  Every Dirichlet matrix $A(r)$ with $A\in\mptn_L(T)$ uniquely determines the eigenpair $(\lambda, \vspan(\{\bx\}))$ of the Perron value and the corresponding one-dimensional eigenspace generated by a strictly increasing Perron vector $\bx$.  Conversely, a pair $(\lambda, \vspan(\{\bx\}))$ of a value $\lambda > 0$ and a one-dimensional subspace generated by a strictly increasing vector $\bx$ uniquely determines the Dirichlet matrix $A(r)$ with $A\in\mptn_L(T)$ having $\lambda$ as the Perron value and $\vspan(\{\bx\})$ as the corresponding eigenspace.
\end{corollary}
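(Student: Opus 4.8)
The plan is to deduce this corollary directly from Theorem~\ref{thm:dirichletbi}, which already sets up the correspondence at the level of strictly increasing \emph{vectors}; the only genuinely new bookkeeping is to pass to the level of the one-dimensional eigenspace and to check that the construction is invariant under rescaling a chosen Perron vector.

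For the forward direction I would invoke the two facts recorded just before Example~\ref{ex:dirchwtop}: every Dirichlet matrix $A(r)$ with $A\in\mptn_L(T)$ is positive definite, and $cI - A(r)$ is a nonnegative irreducible matrix once $c$ is large enough. Applying the Perron--Frobenius theorem to $cI - A(r)$ shows that the smallest eigenvalue $\lambda$ of $A(r)$ is simple, hence its eigenspace is one-dimensional and is spanned by an entrywisely positive vector $\bx$. The forward half of Theorem~\ref{thm:dirichletbi} then tells us that this $\bx$ is strictly increasing on $T$ with the boundary $r$. Thus $A(r)$ unambiguously determines the pair $(\lambda,\vspan(\{\bx\}))$, as claimed.

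For the converse, suppose $(\lambda,\vspan(\{\bx\}))$ is given with $\lambda > 0$ and $\bx$ strictly increasing. I would first note that the construction descends from vectors to the span: replacing $\bx$ by $c\bx$ with $c>0$ multiplies both $N_r^{-1}\bx$ and $N_r\trans\bx$ by $c$, since $N_r^{-1}$ and $N_r\trans$ act linearly, so the weight vector $\bw = \lambda(N_r^{-1}\bx)\oslash(N_r\trans\bx)$ and hence the matrix $N_rWN_r\trans$ are unchanged; moreover $c\bx$ is again strictly increasing and entrywisely positive. So we may fix a representative $\bx$. Existence of a Dirichlet matrix with Perron value $\lambda$ and Perron vector $\bx$ is precisely the ``moreover'' clause of Theorem~\ref{thm:dirichletbi}, namely $A(r) = N_rWN_r\trans$ with $W = \diag(\bw)$; and by the Perron--Frobenius argument at the end of that proof, having the entrywisely positive eigenvector $\bx$ for eigenvalue $\lambda$ forces $\lambda$ to be the Perron value. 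For uniqueness, if $B(r)$ with $B\in\mptn_L(T)$ also has Perron value $\lambda$ and eigenspace $\vspan(\{\bx\})$, write $B(r) = N_rW'N_r\trans$ for its weight diagonal matrix $W'$; since $N_r$ is invertible, $N_rW'N_r\trans\bx = \lambda\bx$ forces $W'(N_r\trans\bx) = \lambda N_r^{-1}\bx$, and because $N_r\trans\bx$ is entrywisely positive (Proposition~\ref{prop:nrtx}) this pins down $W' = \diag\bigl(\lambda(N_r^{-1}\bx)\oslash(N_r\trans\bx)\bigr) = W$, so $B(r) = A(r)$.

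I do not anticipate a real obstacle: the substance is already carried by Theorem~\ref{thm:dirichletbi}, and this corollary is essentially a restatement packaging that bijection as a correspondence between Dirichlet matrices and eigenpairs $(\lambda,\vspan(\{\bx\}))$. The one point deserving a line of care is the scaling invariance of $\bw$, which is what allows the construction to be well defined on one-dimensional subspaces rather than on individual vectors.
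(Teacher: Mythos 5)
Your proposal is correct and follows the same route the paper intends: the paper gives no separate proof of this corollary, treating it as an immediate repackaging of Theorem~\ref{thm:dirichletbi}, and your argument supplies exactly the missing bookkeeping (simplicity of the Perron value via Perron--Frobenius, scaling invariance of $\bw = \lambda(N_r^{-1}\bx)\oslash(N_r\trans\bx)$, and uniqueness of $W$ from the invertibility of $N_r$). No gaps.
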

% \begin{proof}
% By the Perron--Frobenius theorem, $\mul_{A(r)}(\lambda) = 1$ when $\lambda = \lambda_1(A(r))$ is the Perron value, which established the first statement.   
% By the proof of Theorem~\ref{thm:dirichletbi}, any Dirichlet matrix $A(r)$ with $A\in\mptn_L(T)$ can be written as $N_rWN_r\trans$.  Then the equation $N_rWR_r\trans\bx = \lambda\bx$ leads to $WN_r\trans\bx = \lambda N_r^{-1}\bx$.  Since both $N_r\trans\bx$ and $N_r^{-1}\bx$ are entrywisely positive, $W$ is uniquely determined, giving the second statement.  
% \end{proof}

\subsection{Weighted Laplacian matrices and Fiedler vectors}

Using Theorem~\ref{thm:dirichletbi}, we may construct the weighted Laplacian matrices from the Fiedler vectors.  Let $T$ be a tree, $v\in V(T)$, and $T_1, \ldots, T_k$ the components of $T - v$ with $k = \deg_T(v)$.  A \emph{branch} of $T$ at $v$ is the subgraph of $T$ induced on $V(T_i)\cup\{v\}$ for some $i$ with the boundary $v$.  

\begin{algorithm}[Type I Fiedler-like vector]
\label{alg:typeif}
Let $T$ be a tree and $\bx\in\mathbb{R}^{V(T)}$ a Type I Fiedler-like vector.  A weighted Laplacian matrix $A\in\mptn_L(G)$ with $A\bx = \bx$ can be constructed by the following steps.
\begin{enumerate}
\item Let $\{r\}$ be the characteristic set of $\bx$.  Let $B_1, \ldots, B_k$ be the branches of $T$ at $r$.  Let $V_i = V(B_i) \setminus \{r\}$ for $i = 1,\ldots, k$.  
\item For $i = 1,\ldots, k$, 
\begin{enumerate}
    \item if the subvector $\bx[V_i]$ is entrywisely positive, solve the weight assignment $\bw_i$ for $B_i$ such that its Dirichlet matrix $A_i$ has Perron value $\mu_i = 1$ and Perron vector $\bx_i = \bx[V_i]$, 
    \item if the subvector $\bx[V_i]$ is entrywisely negative, solve the weight assignment $\bw_i$ for $B_i$ such that its Dirichlet matrix $A_i$ has Perron value $\mu_i = 1$ and Perron vector $\bx_i = -\bx[V_i]$, and
    \item if the subvector $\bx[V_i]$ is constantly zero, choose an arbitrary strictly increasing vector $\bx_i$ and solve the weight assignment $\bw_i$ for $B_i$ such that its Dirichlet matrix $A_i$ has Perron value $\mu_i \geq 1$ and Perron vector $\bx_i$.
\end{enumerate}
\item Since each edge of $T$ only appears in exactly one branch, $\bw_1, \ldots, \bw_k$ naturally define a weight assignment $\bw$ of $T$.  Return the weighted Laplacian matrix of $(T,\bw)$.   
\end{enumerate}
\end{algorithm}

\begin{theorem}
\label{thm:typeif}
Given a tree $T$ and a Type I Fiedler-like vector $\bx\in\mathbb{R}^{V(T)}$, Algorithm~\ref{alg:typeif} always generates a matrix $A\in\mptn_L(T)$ such that $A\bx = \bx$ with $1 = \lambda_2(A)$.  
\end{theorem}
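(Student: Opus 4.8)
The plan is to verify that the matrix $A$ returned by Algorithm~\ref{alg:typeif} lies in $\mptn_L(T)$, that $A\bx = \bx$, and that the eigenvalue $1$ is precisely $\lambda_2(A)$. First I would observe that $A$ is a weighted Laplacian matrix of $T$ by construction: each branch $B_i$ contributes a positive weight assignment $\bw_i$ (the existence of $\bw_i$ in cases (a)--(c) is guaranteed by Theorem~\ref{thm:dirichletbi} applied to the branch $B_i$ with boundary $r$, since $\bx[V_i]$, $-\bx[V_i]$, or the chosen $\bx_i$ is strictly increasing on $B_i$ by the Type~I monotonicity hypothesis on $\bx$), and these assemble into a genuine weight vector $\bw$ on $T$ because the edge sets of the branches partition $E(T)$. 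Hence $A = \mptn_L(T)$.

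Next I would check $A\bx = \bx$ entrywise, splitting vertices into $\{r\}$ and the sets $V_i$. For a vertex $w \in V_i$ not equal to the neighbor $u_i$ of $r$ in $B_i$, the row of $A$ at $w$ only involves vertices in $V_i$, so $(A\bx)_w = (A_i \bx[V_i])_w$; using $A_i (\pm\bx[V_i]) = \mu_i(\pm\bx[V_i])$ with $\mu_i = 1$ in cases (a),(b), and $\bx[V_i] = \bzero$ in case (c), this equals $x_w$. For the neighbor $u_i$ of $r$, the row of $A$ includes the off-diagonal entry $-\bw(r,u_i)$ times $x_r = 0$, so the term from $r$ vanishes and again $(A\bx)_{u_i} = (A_i\bx[V_i])_{u_i} = x_{u_i}$. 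Finally, for the vertex $r$ itself, $(A\bx)_r = \sum_i (-\bw(r,u_i)) x_{u_i} + d_r x_r = -\sum_i \bw(r,u_i) x_{u_i}$; since $A\bone = \bzero$ and $\bone\trans\bx = 0$, one can deduce $(A\bx)_r = 0 = x_r$ either directly or by noting $(A\bx)_r = \bone\trans A \bx - \sum_{w\neq r}(A\bx)_w = 0 - \sum_i \sum_{w\in V_i} x_w = -\bone\trans\bx + x_r = 0$. So $1$ is an eigenvalue of $A$ with eigenvector $\bx$.

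The main work is showing $\lambda_2(A) = 1$, i.e. that $1$ is the \emph{second} smallest eigenvalue and not, say, the third or higher. Since $A \in \mptn_L(T)$ and $T$ is connected, $\lambda_1(A) = 0 < \lambda_2(A)$, so it suffices to prove $\lambda_2(A) \geq 1$; equivalently, that the eigenvalues of $A$ in the interval $(0,1)$ number zero, i.e. $A$ has at most one eigenvalue strictly below $1$. Here I would use interlacing together with the block structure of $A(r)$: deleting row and column $r$ gives $A(r) = A_1 \oplus \cdots \oplus A_k$ (a block-diagonal direct sum over the branches, since distinct branches share no vertices other than $r$), whose smallest eigenvalue is $\min_i \mu_i \geq 1$ by construction. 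Thus $A(r)$ is positive definite with $\lambda_1(A(r)) \geq 1$. By the Cauchy interlacing theorem, $\lambda_2(A) \geq \lambda_1(A(r)) \geq 1$. Combined with $A\bx = \bx$, which shows $\lambda_2(A) \leq 1$, we get $\lambda_2(A) = 1$, as desired.

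The step I expect to be the main obstacle is the clean identification $A(r) = \bigoplus_i A_i$ and the verification that each $A_i$ as produced by Theorem~\ref{thm:dirichletbi} is exactly the principal submatrix of $A$ on $V_i$ — one must be careful that the weight assignment on the edge $\{r,u_i\}$ is inherited by the branch $B_i$ (it is, by the partition of $E(T)$) and that Theorem~\ref{thm:dirichletbi} was invoked on $B_i$ with boundary $r$ so that $A_i$ is the Dirichlet matrix $A[V_i]$ of that branch. Once the block decomposition is in hand, the interlacing argument is short. A minor point to dispatch is case (c): when $\bx[V_i] \equiv 0$ the vector $\bx_i$ is an auxiliary strictly increasing vector unrelated to $\bx$, used only to force $\mu_i \geq 1$; this does not affect $A\bx = \bx$ (those rows act on the zero subvector) but is exactly what is needed to keep $\lambda_1(A(r)) \geq 1$.
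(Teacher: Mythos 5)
Your proposal is correct and follows essentially the same route as the paper: existence of the branch weights via Theorem~\ref{thm:dirichletbi}, the block verification of $A\bx=\bx$ (your row-sum argument at $r$ is just a more explicit version of the paper's use of $\bb_i=-A_i\bone$), and the identification $A(r)=\bigoplus_i A_i$ with $\lambda_1(A(r))\geq 1$ followed by Cauchy interlacing. No gaps; only the typo ``$A=\mptn_L(T)$'' for ``$A\in\mptn_L(T)$'' needs fixing.
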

\begin{proof}
We first examine that every step of Algorithm~\ref{alg:typeif} works.  By the definition of a Type I Fiedler-like vector of $T$, the characteristic set of $\bx$ contains a unique vertex $r$.  Moreover, the subvector $\bx[V_i]$ is either entrywisely positive and strictly increasing, entrywisely negative and strictly decreasing (meaning $-\bx[V_i]$ is strictly increasing), or constantly zero.  By Theorem~\ref{thm:dirichletbi}, there is a weight assignment for each branch such that the corresponding Dirichlet matrix $A_i$ has Perron value $\mu_i \geq 1$ and Perron vector $\bx_i$.  By assembling the weight assignments of the branches into a weight assignment of $T$, the algorithm always give a matrix $A\in\mptn_L(T)$.  

By the construction and assuming $r = 1$, the output matrix $A$ and the vector $\bx$ can be conformally written as  
\[
    A = \begin{bmatrix}
        ? & \bb_1\trans & \cdots & \bb_k\trans \\
        \bb_1 & A_1 & ~ & ~ \\
        \cdots & ~ & \ddots & ~ \\
        \bb_k & ~ & ~ & A_k
    \end{bmatrix}
    \text{ and }
    \bx = \begin{bmatrix}
    0 \\ \bx[V_1] \\ \vdots \\ \bx[V_k]
    \end{bmatrix},
\]
where $\bb_i = -A_i\bone$.  Since $A_i\bx_i = \bx_i$ if $\bx[V_i]$ is nonzero, $A_i\bx[V_i] = \bx[V_i]$.  For those $i$ with $\bx[V_i] = \bzero$, $A_i\bx[V_i] = \bzero = \bx[V_i]$ holds as well.  Therefore, $A\bx = \bx$.

Finally, we show that $1 = \lambda_2(A)$.  We have seen that $A(r)$ is the direct sum of matrices $A_1, \ldots, A_k$, whose eigenvalues are $\mu_i$'s.  By the choices of $\mu_i$'s, all eigenvalues of $A(r)$ are greater than or equal to $1$.  By the Cauchy interlacing theorem, there is at most one eigenvalue of $A$ that is strictly smaller than $1$, which is $0$ since $A\in\mptn_L(G)$ and $A\bone = \bzero$.  Thus, $1 = \lambda_2(A)$.  
\end{proof}

\begin{figure}[h]
\centering
\begin{tikzpicture}[scale=1.5]
\begin{scope}[xshift=-1cm]
\node[selected] (1l) at (0, 0) {$1$};
\node[label={below:$1$}] (2) at (-1, 0) {$2$};
\node[label={above:$2$}] (3) at (-2, 0.5) {$3$};
\node[label={below:$2$}] (4) at (-2, -0.5) {$4$};
\end{scope}
\draw (3) --node[edge weight]{$\frac{2}{1}$} (2) --node[edge weight]{$\frac{2}{1}$} (4);
\draw (2) --node[edge weight]{$\frac{5}{1}$} (1l);

\begin{scope}[xshift=1cm]
\node[selected] (1r) at (0, 0) {$1$};
\node[label={below:$1.25$}] (5) at (1, 0) {$5$};
\node[label={below:$1.875$}] (6) at (2, 0.5) {$6$};
\node[label={below:$1.875$}] (7) at (2, -0.5) {$7$};
\end{scope}
\draw (6) --node[edge weight]{$\frac{1.875}{0.625}$} (5) --node[edge weight]{$\frac{1.875}{0.625}$} (7);
\draw (1r) --node[edge weight]{$\frac{5}{1.25}$} (5);

\node[selected] (1m) at (0, 0) {$1$};
\node[label={right:$1$}] (8) at (0, 1) {$8$};
\node[label={right:$2$}] (9) at (0, 2) {$9$};

\draw (1m) --node[edge weight]{$\frac{3}{1}$} (8) --node[edge weight]{$\frac{2}{1}$} (9);
\end{tikzpicture}
\caption{Three branches and their $\bx_i$ (marked in blue) for Example~\ref{ex:t1treerecover}, where on each edge the denominator records of $N_r\trans\bx_i$ and the numerator records $N_r^{-1}\bx_i$.}
\label{fig:t1treerecover}
\end{figure}
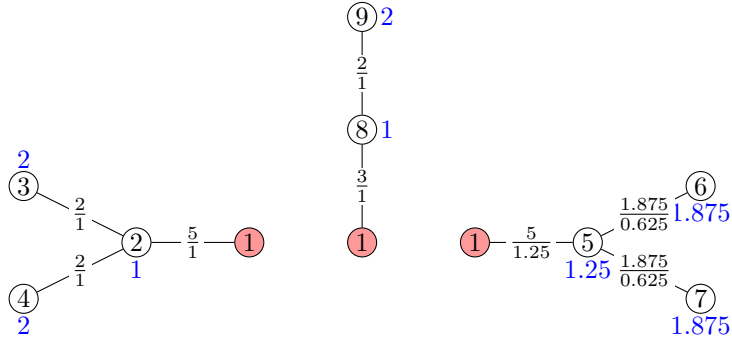

\begin{example}
\label{ex:t1treerecover}
Let $T$ be the tree in Figure~\ref{fig:t1treechar} and  
\[
    \bx = (0,-1,-2,-2,1.25,1.875,1.875,0,0)\trans \in \mathbb{R}^{V(T)}
\]
Let $B_1$, $B_2$, and $B_3$ be the branches of $T$ at $r = 1$ containing vertices $2$, $5$, and $8$, respectively.  By Algorithm~\ref{alg:typeif}, $\bx_1 = (1,2,2)\trans\in \mathbb{R}^{\{2,3,4\}}$ and $\bx_2 = (1.25, 1.875, 1.875)\trans\in\mathbb{R}^{\{5,6,7\}}$, while $\bx_3$ can be chosen to be any strictly increasing vector, e.g., $\bx_3 = (1,2)\trans\in\mathbb{R}^{\{8,9\}}$.  For each of $\bx_i$, we may solve a weight assignment $\bw_i$ using the formula $(N_r^{-1}\bx_i) \oslash (N_r\trans\bx_i)$ by Theorem~\ref{thm:dirichletbi}; these numbers give the fractions on each edge in Figure~\ref{fig:t1treerecover}.  These weights define a weighted tree based on $T$, which has $1$ as the algebraic connectivity and $\bx$ as the Fiedler vector.  Moreover, we still have the freedom to multiply the weights on $B_3$ by a ratio $\mu_3 \geq 1$.  By choosing $\mu_3 = 2$, we obtain the weight assignment as in Figure~\ref{fig:t1treechar}.
\end{example}

Now we consider the Type II Fiedler-like vectors.  We first make an observation, which works for general weighted graphs.  

\begin{lemma}
\label{lem:charcontract}
Let $(G,\bw)$ be a weighted graph with a vertex $r$ of degree $2$, where $p$ and $q$ are its neighbors.  Let $(G',\bw')$ be the weighted graph obtained from $G$ by removing $r$ and adding the edge $\{p,q\}$ with the weight  
\[
    \bw'(p,q) = \frac{\bw(p,r)\bw(q,r)}{\bw(p,r) + \bw(q,r)}.
\]
Let $A$ and $A'$ be the weighted Laplacian matrices of $G$ and $G'$, respectively.  If $A\bx = \lambda\bx$ for some $\bx\in\mathbb{R}^{V(G)}$ and the $r$-th entry of $\bx$ is $0$, then by removing the $r$-th entry from $\bx$, the resulting vector $\bx'\in\mathbb{R}^{V(G')}$ has $A'\bx' = \lambda\bx'$.  
\end{lemma}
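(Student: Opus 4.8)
The plan is to prove this by recognizing $A'$ as the Schur complement of $A$ at the single index $r$, and then using the standard observation that a $\lambda$-eigenvector of $A$ that vanishes at $r$ restricts to a $\lambda$-eigenvector of that Schur complement. Order the vertices so that $r$ comes last and write
\[
    A = \begin{bmatrix} A(r) & \bb \\ \bb\trans & d \end{bmatrix},
\]
where $d = A_{rr} = \bw(p,r) + \bw(q,r)$ and $\bb \in \mathbb{R}^{V(G-r)}$ is the $r$-th column of $A$ with its $r$-entry deleted, i.e.\ $\bb$ has $-\bw(p,r)$ in position $p$, $-\bw(q,r)$ in position $q$, and $0$ elsewhere. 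The argument then splits into two essentially independent steps.

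First I would check the matrix identity $A' = A(r) - d^{-1}\bb\bb\trans$. The rank-one correction $d^{-1}\bb\bb\trans$ vanishes outside the $2\times 2$ block indexed by $\{p,q\}$, where it has $p,q$-entry $\bw(p,r)\bw(q,r)/d = \bw'(p,q)$, $p,p$-entry $\bw(p,r)^2/d$, and $q,q$-entry $\bw(q,r)^2/d$. Outside that block, $A(r)$ already agrees with the weighted Laplacian of $(G',\bw')$, since the only edges in which $G$ and $G'$ differ are $\{p,r\}$ and $\{q,r\}$ (incident to the deleted vertex) together with the new edge $\{p,q\}$. Inside the block: the $p,q$-entry becomes $0 - \bw'(p,q) = -\bw'(p,q)$, which is the weight of $\{p,q\}$ in $G'$ with a minus sign; the $p,p$-entry becomes $\bigl(\bw(p,r) + \sum_{k\sim p,\,k\neq r}\bw(p,k)\bigr) - \bw(p,r)^2/d$, and since $\bw(p,r) - \bw(p,r)^2/d = \bw(p,r)\bw(q,r)/d = \bw'(p,q)$ this equals $\bw'(p,q) + \sum_{k\sim p,\,k\neq r}\bw(p,k)$, exactly the weighted degree of $p$ in $(G',\bw')$; and symmetrically for $q$. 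Hence $A(r) - d^{-1}\bb\bb\trans = A'$.

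Then I would expand the eigen-equation. Writing $\bx = \begin{bmatrix} \bx' \\ 0 \end{bmatrix}$ with $\bx' \in \mathbb{R}^{V(G')}$, the block form of $A\bx = \lambda\bx$ yields, from the last row, $\bb\trans\bx' + d\cdot 0 = \lambda\cdot 0$, i.e.\ $\bb\trans\bx' = 0$; and from the remaining rows, $A(r)\bx' + \bb\cdot 0 = \lambda\bx'$, i.e.\ $A(r)\bx' = \lambda\bx'$. Combining these with the first step,
\[
    A'\bx' = A(r)\bx' - d^{-1}\bb(\bb\trans\bx') = \lambda\bx' - d^{-1}\bb\cdot 0 = \lambda\bx',
\]
which is the claim.

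There is no deep obstacle here; the only delicate points are the bookkeeping in the first step --- it is precisely the collapse $\bw(p,r) - \bw(p,r)^2/d = \bw'(p,q)$ that makes $A'$ land exactly in $\mptn_L(G')$ rather than in some perturbed matrix, and the formula for $\bw'(p,q)$ in the statement was chosen exactly so that this works --- and the tacit assumption that $p$ and $q$ are non-adjacent in $G$, so that $\{p,q\}$ really is a new edge. If $p$ and $q$ were already adjacent, one would add $\bw'(p,q)$ to the existing weight and the same computation applies; in the intended application $G$ is a tree, so this situation does not arise.
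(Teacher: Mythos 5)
Your proof is correct and is essentially the paper's argument in different clothing: the Schur complement identity $A' = A(r) - d^{-1}\bb\bb\trans$ that you verify entrywise is exactly the congruence $EAE\trans = \begin{bmatrix} s \end{bmatrix} \oplus A'$ that the paper checks on the principal $3\times 3$ block indexed by $\{r,p,q\}$, and your use of the last row of $A\bx=\lambda\bx$ to get $\bb\trans\bx'=0$ plays the same role as the paper's observation $E\bx=\bx$, $E\trans\bx=\bx$. No gaps; your closing remark about $p,q$ being non-adjacent is a fair reading of the statement's implicit hypothesis.
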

\begin{proof}
Let $x_i$ be the $i$-th entry of $\bx$.  For convenience, we also let $w_p = \bw(p,r)$ and $w_q = \bw(q,r)$.  Examining the $r$-th row of $A\bx = \lambda\bx$, we have $-w_px_p - w_qx_q = \lambda x_r = 0$ and, equivalently,
\begin{equation}
\label{eq:wx}
    w_px_p + w_qx_q = 0.
\end{equation}

Let $s = w_p + w_q$.  Define an elementary matrix $E\in\mathbb{R}^{V(T)\times V(T)}$ such that the diagonal entries are $1$'s, the $p,r$-entry is $\frac{w_p}{s}$, the $q,r$-entry is $\frac{w_q}{s}$, and otherwise $0$.  By direct computation and Equation~\ref{eq:wx}, we have $E\bx = \bx$ and $E\trans\bx = \bx$.   

On the other hand, we may check $EAE\trans = \begin{bmatrix} s \end{bmatrix} \oplus A'$, where $\begin{bmatrix} s \end{bmatrix} \oplus A'$ is obtained from $A'$ by padding a new $r$-th row/column, which is zero except that the $r,r$-entry is $s$.  To see this, we focus on the principal submatrices of $EAE\trans$ induced on $\{r,p,q\}$ and observe that 
\[
    \begin{aligned}
    &\mathrel{\phantom{=}}
    \begin{bmatrix}
        1 & 0 & 0 \\
        \frac{w_p}{s} & 1 & 0 \\
        \frac{w_q}{s} & 0 & 1 
    \end{bmatrix}
    \begin{bmatrix}
        s & -w_p & -w_q \\
        -w_p & w_p + \square & 0 \\
        -w_q & 0 & w_q + \triangle
    \end{bmatrix}
    \begin{bmatrix}
        1 & \frac{w_p}{s} & \frac{w_q}{s} \\
        0 & 1 & 0 \\
        0 & 0 & 1 
    \end{bmatrix} \\
    &=     
    \begin{bmatrix}
        s & 0 & 0 \\
        0 & \frac{w_pw_q}{s} + \square & -\frac{w_pw_q}{s} \\
        0 & -\frac{w_pw_q}{s} & \frac{w_pw_q}{s} + \triangle 
    \end{bmatrix},
    \end{aligned}
\]
where $w_p + \square$ and $w_q + \triangle$ indicate the $p,p$-entry and the $q,q$-entry of $A$, respectively.  Note that $\bw'(p,q) = \frac{w_pw_q}{s}$.  Then it is straightforward to check that $EAE\trans = \begin{bmatrix} s \end{bmatrix} \oplus A'$.

Finally, $A\bx = \lambda\bx$, $E\bx = \bx$, $E\trans\bx = \bx$ imply that $(\begin{bmatrix} s \end{bmatrix} \oplus A')\bx = \lambda\bx$.  By removing the $r$-th entry from $\bx$, the resulting vector $\bx'$ has $A'\bx' = \lambda\bx'$.  
\end{proof}

We complement Lemma~\ref{lem:charcontract} with Remark~\ref{rem:elec} to provide some intuition behind the formula.  

\begin{figure}[h]
\centering
\begin{tikzpicture}
\node (0) at (-2,0) {};
\node (1) at (0,0) {};
\node (2) at (4,0) {};
\draw (0) -- (1) -- (2);
\begin{scope}[yshift=0.5cm, every node/.style={rectangle,draw=none}]    
\node at (-1,0) {$R_1$};
\node at (2,0) {$R_2$};
\node[right] at (4,0) {total resistance $R = R_1 + R_2$};
\end{scope}
\begin{scope}[yshift=-0.5cm, every node/.style={rectangle,draw=none}]    
\node at (-1,0) {$w_1 = \frac{1}{R_1}$};
\node at (2,0) {$w_2 = \frac{1}{R_2}$};
\node[right] at (4,0) {total weight $w = \frac{w_1w_2}{w_1 + w_2}$};
\end{scope}
\end{tikzpicture}
\caption{Two resistors in series.}
\label{fig:series}
\end{figure}
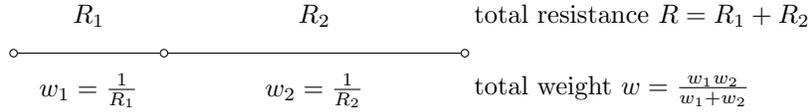

\begin{remark}
\label{rem:elec}
The weighted Laplacian matrices are often used to model an electronic circuit, where the weights are the reciprocals of the resistances; see, e.g., \cite{BF13,GBS08,Devriendt22}.  As illustrated in Figure~\ref{fig:series}, the total resistance $R$ of two resistors in series, with resistances $R_1$ and $R_2$, respectively, is $R = R_1 + R_2$.  With $w = \frac{1}{R}$, $w_1 = \frac{1}{R_1}$, and $w_2 = \frac{1}{R_2}$, intuitively, the total weight of two edges in series is 
\[
    w = \frac{1}{\frac{1}{w_1} + \frac{1}{w_2}} = \frac{w_1w_2}{w_1 + w_2},
\]
justifying the formula in Lemma~\ref{lem:charcontract}.
\end{remark}

Algorithm~\ref{alg:typeiif} can be viewed as a collaboration of Algorithm~\ref{alg:typeif} and Lemma~\ref{lem:charcontract}.  Here we need a specific part of the Perron--Frobenius theorem to compare the spectral radii $\rho$.  

\begin{theorem}
{\rm \cite[Theorem~2.2.1]{BHSoG12}}
\label{thm:pf}
Let $A$ and $B$ be nonnegative matrices such that $B$ is irreducible.  Then the following holds.
\begin{itemize}
\item If $B - A$ is entrywisely nonnegative and $A \neq B$, then $\rho(A) < \rho(B)$. 
\item If $A$ is a proper principal submatrix of $B$, then $\rho(A) < \rho(B)$.  
\end{itemize}
\end{theorem}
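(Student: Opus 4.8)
The plan is to derive both bullets from the monotonicity of the spectral radius on the cone of nonnegative matrices, sharpened to a strict inequality by the irreducibility of $B$. I would rely on the standard Perron--Frobenius facts assumed in \cite{BHSoG12,BapatGM14}: for any nonnegative square matrix $C$, $\rho(C)$ is an eigenvalue of $C$ with some nonnegative eigenvector, and if $C$ is moreover irreducible of size at least $2$ then $\rho(C)>0$, this eigenvector is entrywise positive, and the same applies to $C\trans$; in particular $B$ admits a positive left Perron eigenvector $\bw$ with $\bw\trans B=\rho(B)\bw\trans$. For the weak inequality I would note that $0\le A\le B$ entrywise implies $0\le A^k\le B^k$ entrywise for every $k$, hence the induced $\infty$-norms (the maximum row sums of these nonnegative matrices) obey $\|A^k\|_\infty\le\|B^k\|_\infty$, and Gelfand's formula $\rho(C)=\lim_k\|C^k\|_\infty^{1/k}$ then gives $\rho(A)\le\rho(B)$.

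For the first bullet, suppose toward a contradiction that $0\le A\le B$, that $A\neq B$, and yet $\rho(A)=\rho(B)=:\rho$. Choose a nonnegative nonzero $\bu$ with $A\bu=\rho\bu$. Then
\[
    \bw\trans(B-A)\bu=\bw\trans B\bu-\bw\trans A\bu=\rho\,\bw\trans\bu-\rho\,\bw\trans\bu=0 .
\]
Since $B-A\ge 0$, $\bw>0$, and $\bu\ge 0$, each summand $w_i(B-A)_{ij}u_j$ must vanish, so $(B-A)_{ij}=0$ whenever $u_j>0$; that is, every column of $B-A$ indexed by $S:=\{\,j:u_j>0\,\}$ is zero. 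If $S$ were the entire index set we would get $B=A$, a contradiction, so $\bar S:=\{\,j:u_j=0\,\}$ is nonempty. Examining the rows of $A\bu=\rho\bu$ indexed by $\bar S$ gives $\sum_{j\in S}A_{ij}u_j=0$ for $i\in\bar S$, hence $A_{ij}=0$ for all $i\in\bar S$, $j\in S$; together with $B_{ij}=A_{ij}$ for $j\in S$ this forces the block $B[\bar S,S]$ to be zero, where $\{\bar S,S\}$ is a nontrivial partition of the index set. That contradicts the irreducibility of $B$, so $\rho(A)<\rho(B)$.

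For the second bullet, write $A=B[\alpha]$ for a proper nonempty $\alpha$, and let $\tilde A$ be the matrix of the size of $B$ that agrees with $B$ on the rows and columns in $\alpha$ and is zero elsewhere. Padding by zero rows and columns does not change the nonzero spectrum, so $\rho(\tilde A)=\rho(A)$, while clearly $0\le\tilde A\le B$ entrywise. Since $\alpha$ is proper, any index $i\notin\alpha$ gives an all-zero row of $\tilde A$, whereas an irreducible $B$ of size at least $2$ has no zero row, so $\tilde A\neq B$; applying the first bullet to the pair $(\tilde A,B)$ yields $\rho(A)=\rho(\tilde A)<\rho(B)$.

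The step to be most careful about is the strict inequality in the first bullet: $A$ itself need not be irreducible, so it need not possess an entrywise positive eigenvector, which is precisely why the argument pairs the positive \emph{left} Perron eigenvector of $B$ with a merely nonnegative right eigenvector of $A$ and then reads off a zero off-diagonal block of $B$ that is incompatible with irreducibility. The genuinely degenerate cases ($1\times 1$ matrices, or $\rho=0$) are handled by the same bookkeeping --- e.g.\ for $n=1$ a nonzero $\bu$ forces $S$ to be the whole index set, hence $B=A$ --- or are excluded by the standing hypothesis that $B$ is a genuine irreducible matrix, so I would not dwell on them.
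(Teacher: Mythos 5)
The paper does not prove this statement at all: it is quoted verbatim as a known result with the citation to \cite[Theorem~2.2.1]{BHSoG12}, so there is no in-paper proof to compare against. Your argument is a correct, self-contained proof of the standard Perron--Frobenius monotonicity facts: the weak inequality via $0\le A^k\le B^k$ and Gelfand's formula, the strict inequality by pairing the positive left Perron vector of $B$ with a nonnegative right $\rho(A)$-eigenvector of $A$ to extract a zero block $B[\bar S,S]$ contradicting irreducibility, and the reduction of the submatrix case to the first bullet by zero-padding. This is essentially the textbook route (and close in spirit to the proof in Brouwer--Haemers), and the one place that usually trips people up --- that $A$ need not be irreducible, so one cannot use a positive eigenvector of $A$ --- is exactly the point you handle correctly by working with the left eigenvector of $B$ instead.
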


\begin{algorithm}[Type II Fiedler-like vector]
\label{alg:typeiif}
Let $T$ be a tree and $\bx = \begin{bmatrix} x_i \end{bmatrix}\in\mathbb{R}^{V(T)}$ a Type II Fiedler-like vector.  A weighted Laplacian matrix $A\in\mptn_L(G)$ with $A\bx = \bx$ can be constructed by the following steps.
\begin{enumerate}
\item Let $\{p,q\}$ be the characteristic set of $\bx$, say $x_p < 0 < x_q$.  Let $B_1$ be the branch at $q$ containing $p$, and $B_2$ the branch at $p$ containing $q$. 
 Let $V_1 = V(B_1)\setminus \{q\}$ and $V_2 = V(B_2)\setminus \{p\}$.  
\item For $i = 1,2$, 
\begin{enumerate}
    \item if the subvector $\bx[V_i]$ is entrywisely positive, solve the weight assignment $\bw_i$ for $B_i$ such that its Dirichlet matrix $A_i$ has Perron value $1$ and Perron vector $\bx_i = \bx[V_i]$, and 
    \item if the subvector $\bx[V_i]$ is entrywisely negative, solve the weight assignment $\bw_i$ for $B_i$ such that its Dirichlet matrix $A_i$ has Perron value $1$ and Perron vector $\bx_i = -\bx[V_i]$.
\end{enumerate}
\item Define the weight assignment $\bw$ for $T$ such that $\bw(e) = \bw_i(e)$ if $e\in E(B_i)$ and $e \neq \{p,q\}$ for $i = 1, 2$, and  
\[
    \bw(p,q) = \frac{\bw_1(p,q)\bw_2(p,q)}{\bw_1(p,q) + \bw_2(p,q)}.
\]
Return the weighted Laplacian matrix of $(T,\bw)$.   
\end{enumerate}
\end{algorithm}

\begin{theorem}
\label{thm:typeiif}
Given a tree $T$ and a Type II Fiedler-like vector $\bx\in\mathbb{R}^{V(T)}$, Algorithm~\ref{alg:typeiif} always generates a matrix $A\in\mptn_L(T)$ such that $A\bx = \bx$ and $1 = \lambda_2(A)$.  
\end{theorem}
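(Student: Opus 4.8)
The plan is to verify that Algorithm~\ref{alg:typeiif} is well defined, then to derive $A\bx=\bx$ from an edge subdivision together with Lemma~\ref{lem:charcontract}, and finally to pin down $\lambda_2(A)=1$ by an inertia computation on $A-I$. First I would check the steps of the algorithm. Since $\bx$ is a Type~II Fiedler-like vector with characteristic set $\{p,q\}$ and $x_p<0<x_q$, the prescribed monotonicity makes $\bx[V_1]$ entrywise negative with $-\bx[V_1]$ strictly increasing on $B_1$ (whose boundary $q$ has unique neighbor $p$), and $\bx[V_2]$ entrywise positive and strictly increasing on $B_2$. Hence Theorem~\ref{thm:dirichletbi} supplies unique entrywise-positive weight assignments $\bw_1,\bw_2$ whose Dirichlet matrices $A_1,A_2$ have Perron value $1$ with Perron vectors $-\bx[V_1],\bx[V_2]$; then $\bw(p,q)>0$, so the assembled $\bw$ is a legitimate weight assignment and $A\in\mptn_L(T)$.

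For $A\bx=\bx$ I would subdivide: let $T^+$ be obtained from $T$ by replacing the edge $\{p,q\}$ with edges $\{p,r\}$ and $\{r,q\}$ through a new vertex $r$, and let $\bx^+$ extend $\bx$ by giving the $r$-entry the value $0$. Then $\bone\trans\bx^+=0$, the only zero entry of $\bx^+$ is at $r$, and the Type~II monotonicity of $\bx$ makes $\bx^+$ strictly monotone along every path leaving $r$, so $\bx^+$ is a Type~I Fiedler-like vector of $T^+$ with characteristic set $\{r\}$. The two branches of $T^+$ at $r$ are isomorphic to $B_1$ and $B_2$ ($r$ playing the role of $q$, resp.\ $p$), so by the uniqueness in Corollary~\ref{cor:dirunique}, Step~2 of Algorithm~\ref{alg:typeif} assigns them exactly the weights $\bw_1,\bw_2$; calling the resulting weight assignment $\bw^+$, we have $\bw^+(p,r)=\bw_1(p,q)$ and $\bw^+(r,q)=\bw_2(p,q)$. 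By Theorem~\ref{thm:typeif}, the weighted Laplacian $A^+$ of $(T^+,\bw^+)$ satisfies $A^+\bx^+=\bx^+$ and $\lambda_2(A^+)=1$. Finally, applying Lemma~\ref{lem:charcontract} to $T^+$ at the degree-$2$ vertex $r$ reconstructs precisely $(T,\bw)$, since $\frac{\bw^+(p,r)\bw^+(r,q)}{\bw^+(p,r)+\bw^+(r,q)}=\bw(p,q)$; as the $r$-entry of $\bx^+$ is $0$, the lemma returns $A\bx=\bx$.

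It remains to show $\lambda_2(A)=1$. The inequality $\lambda_2(A)\le1$ is immediate, since $\bx\ne\bzero$, $A\bx=\bx$, $\bx\perp\bone$, and $\bone$ spans $\ker A$. For $\lambda_2(A)\ge1$ I would show $A-I$ has at most one negative eigenvalue. Partition $V(T)=V_1\dunion V_2$. A direct comparison of entries gives $A[V_1]=A_1-c_1\be_p\be_p\trans$ and $A[V_2]=A_2-c_2\be_q\be_q\trans$, where $c_i=\bw_i(p,q)-\bw(p,q)>0$ (the only discrepancy is in the $(p,p)$, resp.\ $(q,q)$, diagonal entry), and from the formula in Step~3 of the algorithm one checks $\bw(p,q)^2=c_1c_2$. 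Since $A_1,A_2$ have simple smallest eigenvalue $1$, both $A_i-I$ are positive semidefinite with one-dimensional kernels spanned by $-\bx[V_1]$ and $\bx[V_2]$. With $Q:=A[V_2]-I$, the relation $Q\bx[V_2]=-c_2x_q\be_q$ shows both that $Q$ is invertible (using $\ker(A_2-I)=\vspan(\bx[V_2])$ and $x_q\ne0$) and that $(Q^{-1})_{qq}=-1/c_2$, while $\bx[V_2]\trans Q\bx[V_2]=-c_2x_q^2<0$ together with the rank-one structure forces $n_-(Q)=1$. The $(V_1,V_2)$ block of $A-I$ is $-\bw(p,q)\be_p\be_q\trans$, so the Schur complement of the $Q$-block equals $(A[V_1]-I)-\bw(p,q)^2(Q^{-1})_{qq}\be_p\be_p\trans=(A_1-I-c_1\be_p\be_p\trans)+c_1\be_p\be_p\trans=A_1-I\succeq0$. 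Haynsworth inertia additivity then gives $n_-(A-I)=n_-(Q)=1$, so $\lambda_2(A)\ge1$, hence $\lambda_2(A)=1$. (Alternatively, $\lambda_2(A)\ge1$ drops out of testing each $v\perp\bone$ against the congruence $EA^+E\trans=[s]\oplus A$ from the proof of Lemma~\ref{lem:charcontract}, using $\lambda_2(A^+)=1$.)

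I expect the main obstacle to be exactly this inequality $\lambda_2(A)\ge1$. In the Type~I case it came from a single Cauchy interlacing step because $A(r)$ was block diagonal with all Perron values $\ge1$; here no one- or two-vertex deletion has this property — deleting $p$ or $q$ shrinks a diagonal entry below the Dirichlet value, and deleting both $p$ and $q$ loses two eigenvalues under interlacing and yields only $\lambda_3(A)\ge1$. What saves the argument is the coincidence $\bw(p,q)^2=c_1c_2$ paired with $(Q^{-1})_{qq}=-1/c_2$, which collapses the Schur complement exactly onto the positive semidefinite matrix $A_1-I$. Everything else — the signs and monotonicity of the subvectors, the Type~I Fiedler-likeness of $\bx^+$, and the matching of the reconstructed weights — is routine bookkeeping.
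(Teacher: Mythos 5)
Your proposal is correct. The first two parts (well-definedness of the algorithm, and deriving $A\bx=\bx$ by subdividing $\{p,q\}$, padding a zero, invoking Algorithm~\ref{alg:typeif}, and contracting back via Lemma~\ref{lem:charcontract}) follow the paper's proof essentially verbatim. Where you genuinely diverge is the inequality $\lambda_2(A)\ge 1$. The paper stays with eigenvalue comparisons: writing $C_i=A[V_i]=A_i-c_i\be\be\trans$ as you do, it uses the Perron--Frobenius theorem (Theorem~\ref{thm:pf}) to get $\lambda_1(C_i)<1$ and $\lambda_1(C_1(p)),\lambda_1(C_2(q))>1$, then Cauchy interlacing to conclude $\lambda_2(C_i)>1$, observes that $A(p)=C_1(p)\oplus C_2$ therefore has exactly one eigenvalue below $1$, and interlaces once more to place $1$ as $\lambda_2(A)$ --- so the single-vertex deletion you dismissed as insufficient does in fact close the argument, once one tracks the \emph{second} eigenvalue of the off-diagonal block. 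Your route instead computes the full inertia of $A-I$ by Haynsworth additivity, using the identity $\bw(p,q)^2=c_1c_2$ (which checks out: $c_i=w_i^2/(w_1+w_2)$) to collapse the Schur complement of $Q=A[V_2]-I$ exactly onto $A_1-I\succeq 0$. Your computation is correct throughout ($Q$ invertible, $(Q^{-1})_{qq}=-1/c_2$, $n_-(Q)=1$), and it buys slightly more than the paper's argument does at this point: it yields $n_0(A-I)=n_0(A_1-I)=1$ for free, i.e., the simplicity of the eigenvalue $1$, which the paper only establishes later in Theorem~\ref{thm:typeiiall}. The paper's version is more elementary in that it needs only interlacing and Perron--Frobenius, tools already in use elsewhere; yours trades those for a congruence/inertia computation that makes the role of the series-weight formula $\bw(p,q)=w_1w_2/(w_1+w_2)$ algebraically explicit.
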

\begin{proof}
We first examine that every step of Algorithm~\ref{alg:typeiif} works.  By the definition of a Type II Fiedler-like vector of $T$, we may assume the characteristic set is $\{p,q\}$.  Moreover, the subvector $\bx[V_i]$ is either entrywisely positive and strictly increasing, or entrywisely negative and strictly decreasing.  By Theorem~\ref{thm:dirichletbi}, there is a weight assignment for each branch such that the corresponding Dirichlet matrix $A_i$ has Perron value $1$ and Perron vector $\bx_i$.  Following the definition of Step 3, the algorithm always gives a matrix $A\in\mptn_L(T)$.  

Now we check $A\bx = \bx$ with the help of Algorithm~\ref{alg:typeif} and Lemma~\ref{lem:charcontract}.  Let $T^{(0)}$ be the tree obtained from $T$ by subdividing the edge $\{p,q\}$ into two edges $\{p,r\}$ and $\{q,r\}$, where a new vertex $r$ is added.  Let $\bx^{(0)}$ be the vector obtained from $\bx$ by padding the new $r$-th entry as $0$.  Then Algorithm~\ref{alg:typeif} gives a weight assignment $\bw^{(0)}$ for $T^{(0)}$ such that its weighted Laplacian matrix $A^{(0)}$ has $A^{(0)}\bx^{(0)} = \bx^{(0)}$.  Note applying Algorithm~\ref{alg:typeif} to $T^{(0)}, \bx^{(0)}$ and applying Algorithm~\ref{alg:typeiif} to $T,\bx$ give the same intermediate weight assignments $\bw_1$ and $\bw_2$.  The only difference is that $\bw^{(0)}$ simply combines $\bw_1$ and $\bw_2$, while $\bw$ merge $\bw_1$ and $\bw_2$ by defining a new $\bw(p,q)$.  Therefore, by Lemma~\ref{lem:charcontract}, $A\bx = \bx$.  

Finally, we show that $1$ is indeed the second smallest eigenvalue of $A$.  Let $C_i = A[V_i]$ for $i = 1,2$.  We observe that $\bw(p,q) < \bw_1(p,q)$ by definition and the fact $\frac{\bw_2(p,q)}{\bw_1(p,q) + \bw_2(p,q)} < 1$; similarly, $\bw(p,q) < \bw_2(p,q)$.  Therefore, $C_i$ is obtained from $A_i$ by subtracting a positive value from the a diagonal entry (the $p,p$-entry for $i = 1$ and the $q,q$-entry for $i = 2$).  By the Perron--Frobenius Theorem (Theorem~\ref{thm:pf}), $\lambda_1(C_i) < \lambda_1(A_i) = 1$.  Again by the Perron--Frobenius Theorem (Theorem~\ref{thm:pf}), since $C_1(p)$ is a principal submatrix of $A_1$, $\lambda_1(C_1(p)) > \lambda_1(A_1) = 1$; similarly $\lambda_1(C_2(q)) > \lambda_1(A_2) = 1$.  By the Cauchy interlacing theorem, $\lambda_2(C_1) \geq \lambda_1(C_1(p)) > 1$ and $\lambda_2(C_2) \geq \lambda_1(C_2(q)) > 1$.  Now observe that $A(p)$ is the direct sum of $C_1(p)$ and $C_2$.  Thus, the smallest eigenvalue of $A(p)$ is the smallest eigenvalue of $C_2$, which is smaller than $1$.  The second smallest eigenvalue of $A(r)$ is either the smallest eigenvalue of $C_1(p)$ or the second smallest eigenvalue of $C_2$, while both of them are greater than $1$.  By the Cauchy interlacing theorem, $1$ is the second smallest eigenvalue of $A$.   
\end{proof}

\begin{figure}[h]
\centering
\begin{tikzpicture}[scale=1.5]
\begin{scope}[xshift=-1cm]
\node[label={below:$1$}] (1l) at (-0.5, 0) {$1$};
\node[selected] (4l) at (0.5, 0) {$4$};
\node[label={above:$2$}] (2) at (-1.5, 0.5) {$2$};
\node[label={below:$2$}] (3) at (-1.5, -0.5) {$3$};
\end{scope}
\draw (2) --node[edge weight]{$\frac{2}{1}$} (1l) --node[edge weight]{$\frac{2}{1}$} (3);
\draw (1l) --node[edge weight](14l){$\frac{5}{1}$} (4l);

\begin{scope}[xshift=1cm]
\node[selected] (1r) at (-0.5, 0) {$1$};
\node[label={below:$1.25$}] (4r) at (0.5, 0) {$4$};
\node[label={above:$1.875$}] (5) at (1.5, 0.5) {$5$};
\node[label={below:$1.875$}] (6) at (1.5, -0.5) {$6$};
\end{scope}
\draw (5) --node[edge weight]{$\frac{1.875}{0.625}$} (4r) --node[edge weight]{$\frac{1.875}{0.625}$} (6);
\draw (1r) --node[edge weight](14r){$\frac{5}{1.25}$} (4r);

\node[selected] at (-1,1) (1u) {$1$};
\node[selected] at (1,1) (4u) {$4$};
\draw (1u) --node[edge weight](14u){$\frac{20}{9} = \frac{5\cdot 4}{5 + 4}$} (4u);
\draw[-triangle 45, bend right, shorten >=5pt] (14l.north east) to (14u.south);
\draw[-triangle 45, bend left, shorten >=5pt] (14r.north west) to (14u.south);
\end{tikzpicture}
\caption{Two branches and their $\bx_i$ (marked in blue) for Example~\ref{ex:t2treerecover}, where on each edge the denominator records $N_p\trans\bx_1$ (or $N_q\trans\bx_2$) and the numerator records $N_p^{-1}\bx_1$ (or $N_q^{-1}\bx_2$).}
\label{fig:t2treerecover}
\end{figure}
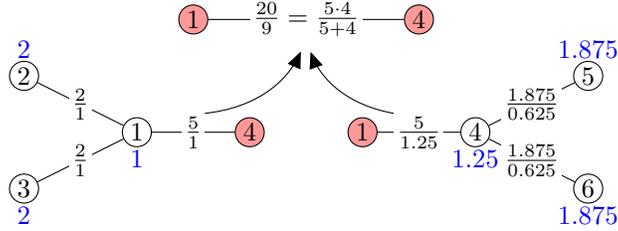

\begin{example}
\label{ex:t2treerecover}
Let $T$ be the tree in Figure~\ref{fig:t2treechar} and  
\[
    \bx = (-1,-2,-2,1.25,1.875,1.875)\trans \in \mathbb{R}^{V(T)}
\]
Let $B_1$ be the branch of $T$ at $q = 4$ containing $p = 1$, and $B_2$ the branch at $1$ containing $4$.  By Algorithm~\ref{alg:typeiif}, $\bx_1 = (1,2,2)\trans\in \mathbb{R}^{\{1,2,3\}}$ and $\bx_2 = (1.25, 1.875, 1.875)\trans\in\mathbb{R}^{\{4,5,6\}}$.  We may solve the weight assignments $\bw_1 = (N_p^{-1}\bx_1) \oslash (N_p\trans\bx_1)$ and $\bw_2 = (N_q^{-1}\bx_2) \oslash (N_q\trans\bx_2)$ by Theorem~\ref{thm:dirichletbi}; these numbers give the fractions on each edge in Figure~\ref{fig:t2treerecover}.  Note that $\bw_1(p,q) = \frac{5}{1} = 5$ and $\bw_2(p,q) = \frac{5}{1.25} = 4$ are usually different.  We define the weight assignment $\bw$ of $T$ by $\bw(p,q) = \frac{5\cdot 4}{5 + 4} = \frac{20}{9}$ while $\bw(e) = \bw_i(e)$ if $e\in E(B_i)$ and $e \neq \{p,q\}$ for $i = 1,2$.  Thus, the weighted Laplacian matrix of $(T,\bw)$ has $\bx$ as its Fiedler vector with respect to the algebraic connectivity $1$.  
\end{example}

Theorems~\ref{thm:typeif} and \ref{thm:typeiif} allow us to put the characteristic set on almost everywhere.

\begin{corollary}
Let $T$ be a tree and $R$ be a set $\{r\}$ such that $r$ is not a leaf or a set $\{p,q\}$ such that $\{p,q\}\in E(T)$.  Then there is a weight assignment $\bw$ such that the characteristic set of the weighted tree $(T,\bw)$ is on $R$.  
\end{corollary}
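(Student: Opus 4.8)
The plan is to reduce the statement to the two constructions already established: by Theorems~\ref{thm:typeif} and \ref{thm:typeiif} it suffices to exhibit a Fiedler-like vector $\bx\in\mathbb{R}^{V(T)}$ whose characteristic set is exactly $R$, since then the appropriate theorem produces a matrix $A\in\mptn_L(T)$ having $\bx$ as a Fiedler vector, and by \cite{Fiedler75ac, KNS96} the characteristic set of the weighted tree is read off from any Fiedler vector, hence equals $R$. So the entire argument is the elementary task of writing down a suitable $\bx$ in each of the two cases, the only inputs being the distance function on $T$ and a scaling to enforce $\bone\trans\bx=0$.

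\emph{Case $R=\{p,q\}$ with $\{p,q\}\in E(T)$.} Deleting the edge $\{p,q\}$ splits $T$ into a component $T_p\ni p$ and a component $T_q\ni q$. Assign to each $v\in V(T_p)$ the value $-(1+d(p,v))$, where $d$ denotes graph distance in $T_p$; this is entrywise negative and strictly decreasing along every path of $T_p$ starting at $p$. Symmetrically, assign to each $v\in V(T_q)$ the value $1+d(q,v)$, entrywise positive and strictly increasing along every path of $T_q$ starting at $q$. Put $S_p=\sum_{v\in V(T_p)}(1+d(p,v))>0$ and $S_q=\sum_{v\in V(T_q)}(1+d(q,v))>0$, and rescale by multiplying the $T_p$-values by $S_q$ and the $T_q$-values by $S_p$. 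The resulting $\bx$ still has no zero entry, is strictly decreasing from $p$ on the $p$-side and strictly increasing from $q$ on the $q$-side, and every edge inside $T_p$ (resp.\ $T_q$) joins two negative (resp.\ two positive) values, so $\{p,q\}$ is the unique edge with $x_px_q<0$. Finally $\bone\trans\bx=-S_qS_p+S_pS_q=0$. Hence $\bx$ is Type~II Fiedler-like with characteristic set $\{p,q\}$, and Theorem~\ref{thm:typeiif} finishes this case.

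\emph{Case $R=\{r\}$ with $r$ not a leaf.} Since $\deg_T(r)\ge 2$, the branches of $T$ at $r$ are $B_1,\dots,B_k$ with $k\ge 2$. On $B_1$ assign each vertex $v\ne r$ the value $1+d(r,v)$, on $B_2$ assign each $v\ne r$ the value $-(1+d(r,v))$, on every $B_j$ with $j\ge 3$ (if any) assign $0$ to all vertices other than $r$, and set $x_r=0$; then rescale the two nonzero branches as in the previous case so that $\bone\trans\bx=0$. The only vertex carrying value $0$ that is adjacent to a vertex of nonzero value is $r$: every vertex of $B_j$ with $j\ge 3$ is adjacent only to $0$-valued vertices (its neighbours lie in the same $B_j$ or equal $r$), while $B_1$ and $B_2$ carry nowhere-zero values. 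Along any path starting at $r$, the values are strictly increasing (into $B_1$), strictly decreasing (into $B_2$), or constantly $0$ (into some $B_j$, $j\ge 3$). Thus $\bx$ is Type~I Fiedler-like with characteristic set $\{r\}$, and Theorem~\ref{thm:typeif} concludes.

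The routine verifications carry all the content, so there is no deep obstacle; the one point genuinely requiring care is the uniqueness clause in the Type~I definition, namely that no vertex other than $r$ is a zero vertex adjacent to a nonzero vertex. This is precisely why the "dead" branches $B_3,\dots,B_k$ must be made identically zero (so that all their internal and boundary adjacencies are among zeros) and why one positively valued and one negatively valued branch are both needed, so that $r$ itself has a nonzero neighbour; the latter is exactly where the hypothesis that $r$ is not a leaf (equivalently $\deg_T(r)\ge 2$) is used. An entirely analogous remark applies to the "exactly one sign-changing edge" clause in the Type~II case, which is handled by the distance-based labelling above.
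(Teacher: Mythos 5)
Your proof is correct and follows essentially the same route as the paper: the paper's proof simply observes that Fiedler-like vectors with the prescribed characteristic set exist and invokes Theorems~\ref{thm:typeif} and \ref{thm:typeiif}, exactly as you do. Your explicit distance-based construction just fills in the detail that the paper leaves to the reader.
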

\begin{proof}
Observe that a Type I Fiedler-like vector allows its characteristic set $\{r\}$ anywhere as long as $r$ is not a leaf.  Similarly, a Type II Fiedler-like vector allows it characteristic set $\{p,q\}$ anywhere as long as $\{p,q\}\in E(T)$.  Then the statement is immediate from Theorems~\ref{thm:typeif} and \ref{thm:typeiif}.
\end{proof}

\subsection{Complete solutions}

In this subsection, we show that Algorithms~\ref{alg:typeif} and \ref{alg:typeiif} generate all possible solutions.  

Let $G$ be a graph and $A\in\mptn(G)$.  A vertex $v\in V(G)$ is said to a \emph{Parter vertex} with respect to $\lambda$ if $\mul_{A(v)}(\lambda) = \mul_A(\lambda) + 1$; see, e.g., \cite{JDSPaper, ParterPaper, WienerPaper}.  Though not stated explicitly, many places in literature, e.g., \cite{Fiedler75ac, KNS96, BHSoG12}, have observed that the unique vertex in the characteristic set of a Type I Fiedler vector is always a Parter vertex and used the fact in the proofs.  Here we state the general form and include the proof for completeness.

\begin{lemma}
\label{lem:charparter}
Let $T$ be a tree, $A\in\mptn(T)$, and $A\bx = \lambda\bx$.  If $\bx$ is zero at $r\in V(T)$ and nonzero at some neighbor $p$ of $r$, then $r$ is a Parter vertex with respect to $\lambda$.    
\end{lemma}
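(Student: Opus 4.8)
The plan is to exploit the tree structure together with the monotonicity forced on $\bx$ along paths emanating from $r$. First I would set up notation: let $T_1,\dots,T_k$ be the components of $T-r$, where $k=\deg_T(v)$, and let $p\in V(T_1)$ be the specified neighbor with $x_p\neq 0$. Since $\bx$ is zero at $r$, examining the $r$-th row of $A\bx=\lambda\bx$ and using that $x_r=0$ one obtains the linear relation $\sum_{j:\{r,j\}\in E(T)} A_{rj}x_j = 0$; more importantly, the equations $A\bx=\lambda\bx$ restricted to each branch decouple because $x_r=0$, so $\bx[V(T_i)]$ is an eigenvector of $A[V(T_i)]$ with eigenvalue $\lambda$ for each $i$ with $\bx[V(T_i)]\neq\bzero$.

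The key step is to produce, for $A(r)$, an eigenvector for $\lambda$ that is independent of (and in addition to) whatever eigenvectors $\bx$ restricted to branches already gives, and then combine. Concretely: the branch containing $p$ contributes a nonzero $\lambda$-eigenvector $\by_1 := \bx[V(T_1)]$ of $A[V(T_1)]$. Now I would build, inside $A$ itself (not $A(r)$), a separate $\lambda$-eigenvector supported on a different set. The standard trick: since $T_1$ is a tree and $\by_1$ is a nonzero eigenvector of the Dirichlet-type matrix $A[V(T_1)]$, one can scale it so that together with a suitable value at $r$ it extends to a $\lambda$-eigenvector of $A$ supported only on $V(T_1)\cup\{r\}$ — but that requires the $r$-th equation to be satisfiable, which is exactly where one uses that $A[V(T_1)]$ has $\lambda$ as an eigenvalue with the right eigenvector. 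Then the vector $\bx$ (zero on $V(T_1)$ after subtracting a multiple? — no, $\bx$ is zero at $r$ but nonzero on $V(T_1)$) — so the cleaner route is: $\bx$ and this new vector are two linearly independent $\lambda$-eigenvectors of $A$ if and only if... Here I would instead invoke the Parter–Wiener machinery directly: a vertex $v$ of a tree is Parter for $\lambda$ with respect to $A\in\mptn(T)$ iff there exists a $\lambda$-eigenvector of $A$ vanishing at $v$ together with the property that removing $v$ splits the support appropriately; equivalently one checks $\mul_{A(v)}(\lambda)=\mul_A(\lambda)+1$ by a rank/interlacing count.

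So the main line I would carry out: (1) show $\mul_{A(r)}(\lambda)\ge \mul_A(\lambda)+1$ and (2) show $\mul_{A(r)}(\lambda)\le \mul_A(\lambda)+1$, the latter being automatic from Cauchy interlacing (removing one vertex changes multiplicity by at most one). For (1): from $A\bx=\lambda\bx$ with $x_r=0$, restrict to $V(T-r)$ to get that $\bx' := \bx[V(T-r)]$ satisfies $A(r)\bx'=\lambda\bx'$, so $\lambda$ is an eigenvalue of $A(r)$ and $\bx'\neq\bzero$ (since $x_p\neq 0$). Thus $\mul_{A(r)}(\lambda)\ge 1 \ge \mul_A(\lambda)$ only when $\mul_A(\lambda)\le 1$, which is not enough in general — so the real content is producing one *more* eigenvector of $A(r)$ than of $A$. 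The trick for this: since $A(r) = A[V(T_1)] \oplus A[V(T_2)\cup\cdots\cup V(T_k)]$ (block diagonal because $r$ was the only vertex joining the branches), and since $A[V(T_1)]$ has $\lambda$ as an eigenvalue with eigenvector $\by_1$, while separately the whole of $A$ has multiplicity $m := \mul_A(\lambda)$ of $\lambda$ — one shows that \emph{every} $\lambda$-eigenvector of $A$, when restricted to $V(T-r)$, must already be supported so as to not "use" the branch $T_1$ independently, or conversely that $\by_1$ extended by zero on the other branches is not the restriction of any $\lambda$-eigenvector of $A$, hence gives a genuinely new eigenvector of $A(r)$. I expect the clean way is: let $m=\mul_A(\lambda)$; the eigenspace of $A$ for $\lambda$, restricted to $V(T-r)$, is an injective map (since an eigenvector of $A$ vanishing on all of $V(T-r)$ would be a multiple of $\be_r$, which is not an eigenvector), landing inside the $\lambda$-eigenspace of $A(r)$; so $\mul_{A(r)}(\lambda)\ge m$. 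To get strict inequality, exhibit one vector in $\ker(A(r)-\lambda I)$ not in this image: I would use $\by_1$ padded with zeros on the other branches, and argue it is not the restriction of any $A$-eigenvector for $\lambda$ because such an extension would force $x_r$ to satisfy two incompatible requirements from the $r$-th equation and the $p$-side structure — but actually the honest argument is the opposite direction, that if it *were* such a restriction we'd get $\bx$ and it together spanning, contradicting...

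The hard part, and the step I would think hardest about, is precisely establishing $\mul_{A(r)}(\lambda)\ge \mul_A(\lambda)+1$ cleanly. I expect the right tool is the observation (essentially the Parter–Wiener theorem for trees, see \cite{JDSPaper,ParterPaper,WienerPaper}) that if $A\bx=\lambda\bx$ with $x_r=0$ but $\bx$ nonzero on some branch at $r$, then one can choose the indexing of branches so that $\bx$ is nonzero on $T_1$, and then: the vector $\bx$ itself already shows $\lambda\in\spec(A(r))$; moreover $\mul_{A(r)}(\lambda)=\sum_{i=1}^k \mul_{A[V(T_i)]}(\lambda)$, and one shows this sum is $\ge \mul_A(\lambda)+1$ by a dimension count comparing the $\lambda$-eigenspace of $A$ (each vector of which, being zero or nonzero at $r$, decomposes over branches) with the direct sum of branch eigenspaces — the "$+1$" coming from the fact that the map from $\ker(A-\lambda I)$ into $\bigoplus_i \ker(A[V(T_i)]-\lambda I)$ (restriction) has image of codimension at least one, because its image consists of tuples $(\by_1,\dots,\by_k)$ satisfying the single linear constraint imposed by the $r$-th row of $A\bx=\lambda\bx$, namely $\sum_i A_{r,p_i} (\by_i)_{p_i} = \lambda x_r$ evaluated consistently — and $\by_1$ alone (with all other components zero, and it being genuinely in the branch eigenspace) violates surjectivity unless that constraint is vacuous, which it is not since $\bx$ witnesses a nonzero value. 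I would write this as: $\dim\ker(A(r)-\lambda I) = \sum_i \dim\ker(A[V(T_i)]-\lambda I) \ge \dim\ker(A-\lambda I) + 1$, with the last inequality from the restriction map being injective (kernel-free, as above) and non-surjective (the constraint from row $r$ is nontrivial because $\bx$ has a nonzero branch-coordinate, so not every branch-tuple extends). Combined with Cauchy interlacing for the reverse inequality, this gives $\mul_{A(r)}(\lambda)=\mul_A(\lambda)+1$, i.e. $r$ is Parter.
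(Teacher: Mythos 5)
Your overall strategy is workable and, at its core, rests on the same fact the paper uses (that the column $\bb = A(r,r]$ restricted to the branch containing $p$ has nonzero inner product with $\bx$ restricted to that branch, since that restriction of $\bb$ is supported only at $p$ and $x_p\neq 0$). But as written there is a genuine gap at the decisive step. Your dimension count asserts that the restriction map $\ker(A-\lambda I)\to\bigoplus_i\ker(A[V(T_i)]-\lambda I)$ is injective and non-surjective, hence $\mul_{A(r)}(\lambda)\ge\mul_A(\lambda)+1$. For that map to be defined at all, every $\lambda$-eigenvector $\bz$ of $A$ must vanish at $r$: if $z_r\neq 0$, then the rows of $(A-\lambda I)\bz=\bzero$ indexed by $V(T_i)$ give $(A[V(T_i)]-\lambda I)\bz[V(T_i)]=-z_r\,\bb[V(T_i)]\neq\bzero$, so $\bz[V(T_i)]$ is not in the branch kernel and your map does not land in the claimed target. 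You never establish that $z_r=0$ for all eigenvectors; you only note (for injectivity) that no eigenvector is supported solely on $r$. This is not a cosmetic omission: if you instead restrict to the subspace $E_0=\{\bz\in\ker(A-\lambda I):z_r=0\}$, which is all you have a priori, the count only yields $\mul_{A(r)}(\lambda)\ge\dim E_0+1\ge\mul_A(\lambda)$, which does not prove $r$ is Parter. Proving that $E_0$ is the whole eigenspace is exactly equivalent to proving $\bb\notin\Col(A(r)-\lambda I)$, and that is where the real content lies.

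The fix is the paper's argument, which you have all the ingredients for but never assemble: let $C$ be the component of $T-r$ containing $p$; then $A[C]\bx[C]=\lambda\bx[C]$ with $(\bx[C])_p\neq 0$, while $\bb[C]$ is supported only at $p$, so $\bb[C]\trans\bx[C]=A_{rp}x_p\neq 0$. By symmetry $\Col(A[C]-\lambda I)=\ker(A[C]-\lambda I)^\perp$, so $\bb[C]\notin\Col(A[C]-\lambda I)$, and by block-diagonality $\bb\notin\Col(A(r)-\lambda I)$. From the bordered form of $A-\lambda I$ this forces $z_r=0$ for every $\lambda$-eigenvector (closing your gap) and, together with the row-$r$ constraint $\bb\trans\bz[V(T-r)]=0$ cutting out a codimension-one subspace of $\ker(A(r)-\lambda I)$, gives $\mul_{A(r)}(\lambda)=\mul_A(\lambda)+1$ directly, with no need for the Cauchy interlacing half of your argument. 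I would also note that the exploratory, self-contradicting passages (``but actually the honest argument is the opposite direction\dots'') must be resolved into a single committed chain of reasoning before this counts as a proof.
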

\begin{proof}
Let $C$ be the component of $T - r$ containing $p$.  By the structure of $T$ and $A\bx = \lambda \bx$, we have $A[C]\bx[C] = \lambda\bx[C]$.  In other words, $\bx[C]$ is in $\ker(A[C] - \lambda I)$ and $\bx[C]$ is nonzero at $p$ by our assumption.  

On the other hand, let $\bb = A(r,r]$ be the $r$-th column of $A$, removing its $r$-th entry.  Observe that the vector $\bb[C]$ has a unique nonzero entry at $p$ and $0$ elsewhere.  Thus, $\bb[C]$ and $\bx[C]$ have nonzero inner product, which means $\bb[C]\notin\Col(A[C] - \lambda I)$.  This further implies $\bb\notin\Col(A(r) - \lambda I)$ and $\mul_{A(r)}(\lambda) = \mul_A(\lambda) + 1$.  
\end{proof}

\begin{theorem}
\label{thm:typeiall}
Let $T$ be a tree and $\bx\in\mathbb{R}^{V(T)}$ a Type I Fiedler-like vector.  Then any matrix $A\in\mptn_L(T)$ with $\bx$ as its Fiedler vector with respect to the algebraic connectivity $1$ can be generated by Algorithm~\ref{alg:typeif}.  Moreover, the multiplicity of $1$ equals the number of $\mu_i$'s with $\mu_i = 1$ minus $1$.  
\end{theorem}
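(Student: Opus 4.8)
The plan is to start with an arbitrary $A\in\mptn_L(T)$ having $\bx$ as a Fiedler vector with $\lambda_2(A)=1$, and show $A$ is necessarily one of the outputs of Algorithm~\ref{alg:typeif}. Let $\{r\}$ be the characteristic set of $\bx$; since $\bx$ is zero at $r$ and nonzero at some neighbor, Lemma~\ref{lem:charparter} tells us $r$ is a Parter vertex with respect to $1$, so $\mul_{A(r)}(1)=\mul_A(1)+1$. The first step is to decompose $A(r)$ as the direct sum $A_1\oplus\cdots\oplus A_k$ over the branches $B_1,\ldots,B_k$ of $T$ at $r$, where $A_i$ is the Dirichlet matrix of $B_i$ with boundary $r$ (this is forced by the tree structure and the fact that $A\in\mptn_L(T)$ dictates how the $r$-th row/column interacts with each branch). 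Writing $V_i=V(B_i)\setminus\{r\}$, the relation $A\bx=\bx$ restricted to the rows in $V_i$ gives $A_i\,\bx[V_i] = \bx[V_i]$, so each nonzero $\bx[V_i]$ is an eigenvector of the positive definite matrix $A_i$ with eigenvalue $1$.

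The second step is to show $1$ is the \emph{smallest} eigenvalue of $A_i$ whenever $\bx[V_i]\neq\bzero$. This is where the sign/monotonicity structure of the Type I Fiedler-like vector is used: on such a branch $\bx[V_i]$ is entrywisely positive and strictly increasing (or its negative is), hence entrywisely positive up to sign, so by the Perron--Frobenius theorem it must correspond to the smallest eigenvalue $\lambda_1(A_i)$; thus $\lambda_1(A_i)=1$ and $\bx[V_i]$ (or $-\bx[V_i]$) is precisely the Perron vector. By Corollary~\ref{cor:dirunique}, the pair $(1,\vspan(\{\bx[V_i]\}))$ uniquely determines $A_i$ — which is exactly the matrix Algorithm~\ref{alg:typeif} produces in cases (a) and (b). For the branches with $\bx[V_i]=\bzero$, we must show $\lambda_1(A_i)\geq 1$: since $A(r)=\bigoplus A_i$ and $\lambda_2(A)=1$ with $\lambda_1(A)=0$, the Cauchy interlacing theorem forces every eigenvalue of $A(r)$ to be $\geq 1$, so in particular $\lambda_1(A_i)\geq 1$; and any positive definite Dirichlet matrix with Perron value $\mu_i\geq 1$ is a legitimate output of case (c) with an arbitrary strictly increasing Perron vector $\bx_i$. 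Finally, the $r$-th row of $A$ is $\bb_i=-A_i\bone$ on each block (forced by $A\bone=\bzero$) and the $r,r$-entry is $\bone\trans\big(\bigoplus A_i\big)\bone$, so $A$ is completely determined by the $A_i$'s; hence $A$ coincides with an Algorithm~\ref{alg:typeif} output.

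For the multiplicity statement: by the direct-sum decomposition, $\mul_{A(r)}(1)=\sum_i \mul_{A_i}(1)$, and since each $A_i$ has $1$ as its simple Perron value exactly when $\mu_i=1$ (and $1$ is not an eigenvalue of $A_i$ when $\mu_i>1$, as all its eigenvalues exceed $1$ in that case — strictly, since a non-Perron eigenvector on a positive definite Dirichlet matrix cannot be entrywise positive and the Perron value is $\mu_i>1$), we get $\mul_{A(r)}(1)=\#\{i:\mu_i=1\}$. Combining with the Parter relation $\mul_{A(r)}(1)=\mul_A(1)+1$ yields $\mul_A(1)=\#\{i:\mu_i=1\}-1$. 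The main obstacle I anticipate is the careful bookkeeping in the multiplicity count, namely ruling out that $1$ is an eigenvalue of some $A_i$ with $\mu_i>1$ (one needs that $1<\mu_i=\lambda_1(A_i)\leq$ all eigenvalues of $A_i$, which is immediate once $\lambda_1(A_i)=\mu_i$ is established), together with justifying that the Dirichlet block decomposition of $A(r)$ is genuinely forced rather than merely possible — but both follow cleanly from the tree structure and the already-established bijection in Theorem~\ref{thm:dirichletbi} and Corollary~\ref{cor:dirunique}.
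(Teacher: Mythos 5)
Your proposal follows the same route as the paper: invoke Lemma~\ref{lem:charparter} to make $r$ a Parter vertex, decompose $A(r)$ as the direct sum of the branch Dirichlet matrices $A_i$, identify each block with $\bx[V_i]\neq\bzero$ via Corollary~\ref{cor:dirunique}, absorb the blocks with $\bx[V_i]=\bzero$ into case (c) of Algorithm~\ref{alg:typeif}, and read off the multiplicity from $\mul_A(1)=\mul_{A(r)}(1)-1$. Those parts are all sound, and your Perron--Frobenius argument for why $1=\lambda_1(A_i)$ on the nonzero branches is a legitimate (slightly different) justification of that step.

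There is, however, one step whose stated justification does not work: for the branches with $\bx[V_i]=\bzero$ you claim that ``the Cauchy interlacing theorem forces every eigenvalue of $A(r)$ to be $\geq 1$'' using only $\lambda_1(A)=0$ and $\lambda_2(A)=1$. Interlacing alone gives $0=\lambda_1(A)\leq\lambda_1(A(r))\leq\lambda_2(A)=1$, which leaves open the possibility $\lambda_1(A(r))<1$, and then some zero branch could have Perron value below $1$ and $A$ would not be an output of the algorithm. The missing ingredient is the Parter relation you stated at the outset but never wired into this step: with $m=\mul_A(1)$, interlacing pins $\lambda_i(A(r))=1$ only for $i=2,\dots,m$ and forces $\lambda_j(A(r))\geq\lambda_j(A)>1$ for $j\geq m+2$, so the two additional copies of the eigenvalue $1$ required by $\mul_{A(r)}(1)=m+1$ must be $\lambda_1(A(r))$ and $\lambda_{m+1}(A(r))$; in particular $\lambda_1(A(r))=1$, whence every $\lambda_1(A_i)\geq 1$. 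This is exactly how the paper closes that step. With that repair, your multiplicity count $\mul_{A(r)}(1)=\#\{i:\mu_i=1\}$ and the conclusion $\mul_A(1)=\#\{i:\mu_i=1\}-1$ go through as written.
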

\begin{proof}
Let $A\in\mptn_L(T)$ be a matrix such that $\bx$ is its Fiedler vector with respect to $\lambda = 1$.  Let $r$ be the unique vertex in the characteristic set.  Let $B_1, \ldots, B_k$ be the branches of $T$ at $r$ and $V_i = V(B_i) - r$ for $i = 1, \ldots, k$.  

By Lemma~\ref{lem:charparter}, $\mul_{A(r)}(\lambda) = \mul_{A}(\lambda) + 1$.  By the Cauchy interlacing theorem, $\lambda = 1$ is the minimum eigenvalue of $A(r)$.  Since $A(r)$ is the direct sum of $A[V_i]$'s, each $A[V_i]$ has its minimum eigenvalue greater than or equal to $1$.  

Suppose $\bx[V_i] \neq \bzero$ for some $i$.  By direct computation, $\bx[V_i]$ is an eigenvector of $A[V_i]$ with respect to its minimum eigenvalue $\mu_i = 1$.  By Corollary~\ref{cor:dirunique}, the matrix generated by Theorem~\ref{thm:dirichletbi} is exactly $A[V_i]$.  

Suppose $\bx[V_i] = \bzero$ for some $i$.  Then we may assume $A[V_i]$ has its Perron value $\mu_i \geq 1$ and Perron vector $\bx_i$.  Thus, $A[V_i]$ can be constructed by Theorem~\ref{thm:dirichletbi} with $\mu_i$ and $\bx_i$.  Since $A[V_i]$ can be generated in either case, $A$ can be generated by Algorithm~\ref{alg:typeif}.

Finally, the multiplicity of $1$ is determined by $\mul_A(\lambda) = \mul_{A(r)}(\lambda) - 1$, which equals the number of $\mu_i$'s with $\mu_i = 1$ minus $1$.  
\end{proof}

\begin{corollary}
\label{cor:typeiunique}
Let $T$ be a tree with a vertex $r$ of degree $2$.  Every weighted Laplacian matrix $A\in\mptn_L(T)$ with the characteristic set $\{r\}$ uniquely determines the eigenpair $(\lambda,\vspan(\{\bx\}))$ of the algebraic connectivity and the corresponding one-dimensional eigenspace generated by a Type I Fiedler-like vector $\bx$.  Conversely, a pair $(\lambda, \vspan(\{\bx\}))$ of a value $\lambda > 0$ and a one-dimensional subspace generated by a Type I Fiedler-like vector $\bx$ uniquely determines the weighted Laplacian matrix $A\in\mptn_L(T)$ having $\lambda$ as the algebraic connectivity and $\vspan(\{\bx\})$ as the corresponding eigenspace.
\end{corollary}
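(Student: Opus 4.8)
The plan is to obtain this as the degree-$2$, Type I analogue of Corollary~\ref{cor:dirunique}: I would route Theorem~\ref{thm:typeiall} through one structural observation that eliminates the only source of freedom in Algorithm~\ref{alg:typeif}. Write $B_1,B_2$ for the two branches of $T$ at $r$ and $V_i = V(B_i)\setminus\{r\}$. The observation I would establish first is: if $\bx$ is Type I Fiedler-like with characteristic set $\{r\}$, then both $\bx[V_1]$ and $\bx[V_2]$ are nonzero (necessarily of opposite signs). Indeed $x_r = 0$, and the monotonicity in the Type I definition forces each $\bx[V_i]$ to be either entrywise of a single sign or identically zero; since $\{r\}$ being the characteristic set forces $r$ to have a nonzero neighbor, at least one $\bx[V_i]\neq\bzero$, and since $\bone\trans\bx[V_1] + \bone\trans\bx[V_2] = \bone\trans\bx = 0$ while a nonzero single-signed vector has nonzero entry-sum, neither can be zero. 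Consequently, running Algorithm~\ref{alg:typeif} on such an $\bx$ involves no arbitrary choice (Step~2(c) never occurs), and the matrix it returns is uniquely determined, since on each branch Corollary~\ref{cor:dirunique} pins down the Dirichlet matrix from the Perron value $1$ and the ray spanned by $\bx[V_i]$.

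For the forward direction I would take $A\in\mptn_L(T)$ with characteristic set $\{r\}$, set $\lambda = \lambda_2(A)$, and pick a Fiedler vector $\bx$; by \cite{Fiedler75ac, KNS96} it is Fiedler-like, and it is of Type I because the characteristic set is a singleton. By the observation above $\bx$ is nonzero on both $V_1$ and $V_2$, so restricting $A\bx = \lambda\bx$ to the rows in $V_i$ and using $x_r = 0$ shows each $\bx[V_i]$ is, up to sign, an entrywise positive eigenvector of the direct summand $A[V_i]$ of $A(r)$; by the Perron--Frobenius theorem $\lambda$ is then the simple smallest eigenvalue of $A[V_i]$. Since $A(r) = A[V_1]\oplus A[V_2]$, this gives $\mul_{A(r)}(\lambda) = 2$, and Lemma~\ref{lem:charparter} yields $\mul_A(\lambda) = \mul_{A(r)}(\lambda) - 1 = 1$. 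Hence the $\lambda$-eigenspace of $A$ is the line $\vspan(\{\bx\})$, so $A$ determines the pair $(\lambda,\vspan(\{\bx\}))$ with $\bx$ Type I Fiedler-like.

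For the converse, given $\lambda > 0$ and a Type I Fiedler-like $\bx$, I would let $A_0$ be the output of Algorithm~\ref{alg:typeif} on $\bx$ (so $A_0\bx = \bx$ and $\lambda_2(A_0) = 1$ by Theorem~\ref{thm:typeif}) and set $A = \lambda A_0$: it lies in $\mptn_L(T)$, has algebraic connectivity $\lambda$ with eigenspace $\vspan(\{\bx\})$, and its characteristic set is $\{r\}$ because $\bx$ is zero only at $r$ and has a nonzero neighbor there. For uniqueness, any $A'\in\mptn_L(T)$ with algebraic connectivity $\lambda$ and $\lambda$-eigenspace $\vspan(\{\bx\})$ has $\tfrac{1}{\lambda}A'$ realizing $\bx$ as a Fiedler vector at algebraic connectivity $1$, so by Theorem~\ref{thm:typeiall} it is a matrix produced by Algorithm~\ref{alg:typeif} on input $\bx$; since that output is uniquely determined (no zero branch, together with Corollary~\ref{cor:dirunique}), this forces $\tfrac{1}{\lambda}A' = A_0$, i.e.\ $A' = A$.

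The one step carrying real content, and the place I would be most careful, is the ``both branches nonzero'' observation: this is precisely where the hypothesis $\deg_T(r) = 2$ enters, and it is what removes the $\mu_i \geq 1$ freedom in Algorithm~\ref{alg:typeif} that makes the general Type I construction non-unique. Everything else is routine bookkeeping layered on Theorem~\ref{thm:typeiall}, Lemma~\ref{lem:charparter}, and Corollary~\ref{cor:dirunique}.
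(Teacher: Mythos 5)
Your proposal is correct and follows essentially the same route as the paper: both arguments reduce to Theorem~\ref{thm:typeiall} together with the observation that, when $\deg_T(r)=2$, the vector is nonzero on both branches, so Step~2(c) of Algorithm~\ref{alg:typeif} never fires and the output is pinned down by Corollary~\ref{cor:dirunique}. Your explicit derivation of ``both branches nonzero'' from $\bone\trans\bx=0$, and your re-derivation of $\mul_A(\lambda)=1$ via Perron--Frobenius and Lemma~\ref{lem:charparter} rather than quoting the multiplicity count in Theorem~\ref{thm:typeiall}, are just more detailed renderings of the same steps.
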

\begin{proof}
If a matrix $A\in\mptn_L(T)$ has $\lambda_2(A) = 1$ and its characteristic set on $\{r\}$, then it can be generated by Algorithm~\ref{alg:typeif} with a Type I Fiedler-like vector, which is zero on $r$ and nonzero on the two branches.  By Theorem~\ref{thm:typeiall}, $\mul_A(\lambda) = 1$ since $\mu_1 = \mu_2 = 1$.  In general, any matrix $A\in\mptn_L(T)$ with $\lambda_2(A) = \lambda$ and its characteristic set on $\{r\}$ also has $\mul_A(\lambda) = 1$ by applying the arguments to $\frac{1}{\lambda}A$.  Conversely, let $\bx$ be a Type I Fiedler-like vector with the characteristic set $\{r\}$.  The output of Algorithm~\ref{alg:typeif} is unique with the input $\bx$.  By Theorem~\ref{thm:typeiall}, the matrix with a given algebraic connectivity $\lambda > 0$ and a Fiedler vector $\bx$ is unique.
\end{proof}

\begin{theorem}
\label{thm:typeiiall}
Let $T$ be a tree and $\bx\in\mathbb{R}^{V(T)}$ a Type II Fiedler-like vector.  Then any matrix $A\in\mptn_L(T)$ with $\bx$ as its Fiedler vector with respect to the algebraic connectivity $1$ can be generated by Algorithm~\ref{alg:typeiif}.  Moreover, the multiplicity of $1$ equals $1$.  
\end{theorem}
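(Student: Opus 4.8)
The strategy is to reduce the Type II case to the Type I case (Theorem~\ref{thm:typeiall}) by \emph{inverting} the merging step of Algorithm~\ref{alg:typeiif}: subdivide the characteristic edge, land in the Type I world, apply what is already known, and transport the conclusion back. So let $A\in\mptn_L(T)$ be the weighted Laplacian of $(T,\bw)$ with $A\bx=\bx$ and $\lambda_2(A)=1$, and let $\{p,q\}$ be the characteristic set with $x_p<0<x_q$. Form the tree $T^{(0)}$ by subdividing $\{p,q\}$ with a new vertex $r$, and let $\bx^{(0)}$ be $\bx$ with a zero entry inserted at $r$. Because a Type II vector has no zero entry and is strictly monotone along every path leaving $p$ (decreasing) or $q$ (increasing), $\bx^{(0)}$ is a Type I Fiedler-like vector of $T^{(0)}$ whose characteristic set is the singleton $\{r\}$.

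Next I would produce the weight assignment $\bw^{(0)}$ on $T^{(0)}$, agreeing with $\bw$ on every edge other than $\{p,q\}$, for which $\bx^{(0)}$ lies in the kernel of $A^{(0)}-I$, where $A^{(0)}$ is the weighted Laplacian of $(T^{(0)},\bw^{(0)})$. Inspecting the rows of $A^{(0)}\bx^{(0)}=\bx^{(0)}$ at $p$ and $q$ forces
\[
    \bw^{(0)}(p,r)=\bw(p,q)\,\frac{x_p-x_q}{x_p},\qquad \bw^{(0)}(q,r)=\bw(p,q)\,\frac{x_q-x_p}{x_q},
\]
which are genuinely positive precisely because $x_p<0<x_q$; one then checks that the row at $r$ is automatically satisfied and that $\tfrac{\bw^{(0)}(p,r)\bw^{(0)}(q,r)}{\bw^{(0)}(p,r)+\bw^{(0)}(q,r)}=\bw(p,q)$, so that $A$ is exactly the contraction of $A^{(0)}$ at $r$ described in Lemma~\ref{lem:charcontract}. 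I would then argue that $A^{(0)}$ is precisely the matrix returned by Algorithm~\ref{alg:typeif} on input $(T^{(0)},\bx^{(0)})$: for each of the two branches $B'$ of $T^{(0)}$ at $r$, the principal submatrix of $A^{(0)}$ on $V(B')\setminus\{r\}$ is the Dirichlet matrix of $B'$ with boundary $r$, and restricting $A^{(0)}\bx^{(0)}=\bx^{(0)}$ to those rows (where the $r$-entry is $0$) shows the relevant subvector of $\bx$ is an eigenvector for the eigenvalue $1$; since that subvector is sign-definite it is a Perron vector, so by Corollary~\ref{cor:dirunique} the submatrix is exactly the Dirichlet matrix built by Algorithm~\ref{alg:typeif}. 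As both branches are nonzero, Algorithm~\ref{alg:typeif} involves no arbitrary choice, so $A^{(0)}$ is its unique output, and Theorem~\ref{thm:typeiall} yields $\mul_{A^{(0)}}(1)=2-1=1$.

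To finish the ``can be generated'' claim I would run Algorithm~\ref{alg:typeiif} on $(T,\bx)$ and check it reproduces $A$. Each branch $B_i$ used by Algorithm~\ref{alg:typeiif} is isomorphic, via $q\leftrightarrow r$ (respectively $p\leftrightarrow r$) and the identity on the remaining vertices, to a branch of $T^{(0)}$ at $r$ on the same vertex set $V_i$; the Dirichlet matrix Algorithm~\ref{alg:typeiif} solves for on $B_i$ (Perron value $1$, Perron vector $\pm\bx[V_i]$) must, by the uniqueness in Corollary~\ref{cor:dirunique}, equal the submatrix $A^{(0)}[V_i]$ identified above, so the intermediate weights satisfy $\bw_i(e)=\bw^{(0)}(e)$ on interior edges and, in particular, $\bw_1(p,q)=\bw^{(0)}(p,r)$ and $\bw_2(p,q)=\bw^{(0)}(q,r)$. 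The merging step of Algorithm~\ref{alg:typeiif} then returns $\bw(p,q)=\tfrac{\bw^{(0)}(p,r)\bw^{(0)}(q,r)}{\bw^{(0)}(p,r)+\bw^{(0)}(q,r)}$, so Algorithm~\ref{alg:typeiif} outputs the weighted Laplacian of $(T,\bw)$, namely $A$. For the multiplicity statement, now that $A$ is known to be an output of Algorithm~\ref{alg:typeiif}, the interlacing argument in the proof of Theorem~\ref{thm:typeiif} applies and in fact shows $\lambda_2(A(p))>1$, whence Cauchy interlacing gives $\lambda_3(A)\ge\lambda_2(A(p))>1$; since $1$ is an eigenvalue of $A$, this forces $\mul_A(1)=1$.

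I expect the main obstacle to be the bookkeeping in the two middle steps: one must verify that the unique subdivided weights are both positive (this is exactly where the Type II sign condition $x_p<0<x_q$ enters) while simultaneously inverting the harmonic-mean merge, and one must be careful that Algorithm~\ref{alg:typeif} returns \emph{the same} matrix $A^{(0)}$, not merely some matrix with the correct Fiedler vector --- which is why it matters that both branches of $T^{(0)}$ at $r$ are nonzero, so that Corollary~\ref{cor:dirunique}'s uniqueness, rather than only Theorem~\ref{thm:dirichletbi}'s existence, is available. Once these identifications are in place, the rest is a direct chase through the uniqueness results already proved.
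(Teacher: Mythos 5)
Your proposal is correct and is essentially the paper's own argument: both hinge on applying Corollary~\ref{cor:dirunique} to the two sign-blocks to identify the branch Dirichlet matrices, extracting the effective edge weights $w(1-\tfrac{x_q}{x_p})$ and $w(1-\tfrac{x_p}{x_q})$ (both positive exactly because $x_p<0<x_q$), verifying that their harmonic-mean merge returns $\bw(p,q)$, and deducing the multiplicity from the interlacing bounds $\lambda_1(A(p))<1<\lambda_2(A(p))$ established in the proof of Theorem~\ref{thm:typeiif}. The only difference is packaging: the paper reads the two modified diagonal entries directly off the block decomposition of $A$ as $A[V_\pm]$ minus a rank-one diagonal perturbation, rather than routing the bookkeeping through the subdivided tree $T^{(0)}$ and Lemma~\ref{lem:charcontract}.
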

\begin{proof}
Let $A\in\mptn_L(T)$ be a matrix such that $\bx$ is its Fiedler vector with respect to $\lambda = 1$.  Let $V_-$ and $V_+$ be the set of vertices where $\bx$ has negative and positive entries, respectively.  Let $\{p,q\}$ with $p\in V_-$ and $q\in V_+$ be the characteristic set.  Thus, $A$ can be written as  
\[
    A = \begin{bmatrix}
        A[V_-] & -wE_{p,q} \\
        -wE_{p,q}\trans & A[V_+]
    \end{bmatrix},
\]
where $w$ is the weight assigned on $\{p,q\}$ and $E_{p,q}\in\mathbb{R}^{V_-\times V_+}$ has a unique $1$ at the $p,q$-entry and $0$ elsewhere.  Note that $A[V_-]$ is a weighted Laplacian matrix of $T[V_-]$ with $w$ added on its $p,p$-entry.  Similarly, $A[V_+]$ is a weighted Laplacian matrix of $T[V_+]$ with $w$ added on its $q,q$-entry.

Let $x_i$ be the $i$-th entry of $\bx$.  With $A\bx = \bx$, we have 
\[
    A[V_-]\bx[V_-] -wE_{p,q}\bx[V_+] = \bx[V_-].
\]
Since 
\[
    -wE_{p,q}\bx[V_+] = -wx_q\be_p = -w\frac{x_q}{x_p}E_p\bx[V_-],
\]
where $\be_p\in\mathbb{R}^{V_-}$ has a unique $1$ at the $p$-th entry and $0$ elsewhere, and $E_p\in\mathbb{R}^{V_-\times V_-}$ has a unique $1$ at the $p,p$-entry and $0$ elsewhere, we have  
\[
    (A[V_-] - w\frac{x_q}{x_p}E_p)\bx[V_-] = \bx[V_-].
\]
By Corollary~\ref{cor:dirunique}, the matrix generated by Theorem~\ref{thm:dirichletbi} and $-\bx[V_-]$ is $A[V_-] - w\frac{x_q}{x_p}E_p$, which indicates that the weight on the edge $\{p,q\}$ is $w_- = w - w\frac{x_q}{x_p}$.  Similarly, the matrix generated by Theorem~\ref{thm:dirichletbi} and $\bx[V_+]$ is $A[V_+] - w\frac{x_p}{x_q}E_q$, where $E_q\in\mathbb{R}^{V_+\times V_+}$ has a unique $1$ at the $q,q$-entry and $0$ elsewhere, and the weight on the edge $\{p,q\}$ is $w_+ = w - w\frac{x_p}{x_q}$.  Thus, it is straightforward to check that  
\[
    w = \frac{w_-w_+}{w_- + w_+}
\]
and Algorithm~\ref{alg:typeiif} indeed generates $A$.  

Finally, the fact that $1$ has multiplicity $1$ comes from the Cauchy interlacing theorem and the proof of Theorem~\ref{thm:typeiif} that $A(p)$ has $\lambda_1(A(p)) < 1 < \lambda_2(A(p))$.  
\end{proof}

\begin{corollary}
\label{cor:typeiiunique}
Let $T$ be a tree with an edge $\{p,q\}$.  Every weighted Laplacian matrix $A\in\mptn_L(T)$ with the characteristic set $\{p,q\}$ uniquely determines the eigenpair $(\lambda,\vspan(\{\bx\}))$ of the algebraic connectivity and the corresponding one-dimensional eigenspace generated by a Type II Fiedler-like vector $\bx$.  Conversely, a pair $(\lambda, \vspan(\{\bx\}))$ of a value $\lambda > 0$ and a one-dimensional subspace generated by a Type II Fiedler-like vector $\bx$ uniquely determines the weighted Laplacian matrix $A\in\mptn_L(T)$ having $\lambda$ as the algebraic connectivity and $\vspan(\{\bx\})$ as the corresponding eigenspace.
\end{corollary}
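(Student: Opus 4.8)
The plan is to mirror the proof of Corollary~\ref{cor:typeiunique}, substituting Theorem~\ref{thm:typeiiall} and Algorithm~\ref{alg:typeiif} for Theorem~\ref{thm:typeiall} and Algorithm~\ref{alg:typeif}. For the first assertion, I would start from an $A\in\mptn_L(T)$ whose characteristic set is $\{p,q\}$ and set $\lambda = \lambda_2(A)$. Because $\{p,q\}$ is an edge, the dichotomy recalled in Subsection~\ref{ssec:prelim} forces every Fiedler vector of $A$ to be of Type II, hence Type II Fiedler-like; fix such a $\bx$. Since eigenvectors are unchanged under positive scaling, $\bx$ is a Fiedler vector of $\frac{1}{\lambda}A$ with respect to algebraic connectivity $1$, so Theorem~\ref{thm:typeiiall} applies to $\frac{1}{\lambda}A$ and yields $\mul_A(\lambda) = \mul_{\frac{1}{\lambda}A}(1) = 1$. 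Therefore $\ker(A - \lambda I)$ is one-dimensional, and $A$ determines the eigenpair $(\lambda, \vspan(\{\bx\})) = (\lambda_2(A), \ker(A - \lambda_2(A)I))$ with $\bx$ Type II Fiedler-like.

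For the converse I would first handle \emph{existence}. Given $\lambda > 0$ and a Type II Fiedler-like $\bx$, run Algorithm~\ref{alg:typeiif} on $\bx$ to get $A_0\in\mptn_L(T)$ with $A_0\bx = \bx$ and $\lambda_2(A_0) = 1$ (Theorem~\ref{thm:typeiif}), and put $A = \lambda A_0$. Multiplying by a positive constant keeps all off-diagonal entries nonpositive and all row sums zero, so $A\in\mptn_L(T)$; moreover $A\bx = \lambda\bx$ and $\lambda_2(A) = \lambda$, while the ``moreover'' clause of Theorem~\ref{thm:typeiiall} gives $\mul_A(\lambda) = 1$, so the $\lambda$-eigenspace of $A$ is exactly $\vspan(\{\bx\})$.

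For \emph{uniqueness}, let $A\in\mptn_L(T)$ satisfy $\lambda_2(A) = \lambda$ and $\ker(A - \lambda I) = \vspan(\{\bx\})$. Then (a scalar multiple of) $\bx$ is the Fiedler vector of $A$, so $\frac{1}{\lambda}A$ has $\bx$ as a Fiedler vector with respect to algebraic connectivity $1$, and by Theorem~\ref{thm:typeiiall} it is generated by Algorithm~\ref{alg:typeiif} with input $\bx$. The remaining point is that the output of Algorithm~\ref{alg:typeiif} depends only on $\vspan(\{\bx\})$, not on the chosen representative: Step~2 produces, by Corollary~\ref{cor:dirunique}, the unique Dirichlet matrices having Perron value $1$ and the prescribed Perron vectors; the formula $\bw = (N_r^{-1}\by)\oslash(N_r\trans\by)$ is invariant under positive scaling of $\by$, and replacing $\bx$ by $-\bx$ merely swaps the labels $p,q$ and the two branches, so $\bw_1,\bw_2$ and hence $\bw(p,q)$ are unchanged. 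Thus $\frac{1}{\lambda}A$ is the unique output of the algorithm, and $A$, being $\lambda$ times this unique matrix, is determined by $(\lambda,\vspan(\{\bx\}))$.

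I expect the only delicate bookkeeping to be this last paragraph: confirming that Algorithm~\ref{alg:typeiif} is genuinely a function of the \emph{pair} $(\lambda, \vspan(\{\bx\}))$ rather than of a chosen vector, and that the rescaling $A \mapsto \lambda A$ interacts cleanly with the normalization to Perron value $1$ that is hard-coded into the algorithm. Everything substantive — the multiplicity-one statement, the construction, and the ``only these matrices arise'' statement — is already supplied by Theorems~\ref{thm:typeiif} and~\ref{thm:typeiiall} together with Corollary~\ref{cor:dirunique}, so the corollary should follow essentially formally.
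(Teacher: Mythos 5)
Your proposal is correct and follows essentially the same route as the paper: reduce to algebraic connectivity $1$ by rescaling, invoke Theorem~\ref{thm:typeiiall} for the multiplicity-one statement and for the fact that every such matrix arises from Algorithm~\ref{alg:typeiif}, and use Theorem~\ref{thm:typeiif} for existence. Your extra check that the algorithm's output depends only on $\vspan(\{\bx\})$ (scale-invariance of $(N_r^{-1}\by)\oslash(N_r\trans\by)$ and the $p\leftrightarrow q$ swap under $\bx\mapsto-\bx$) is a worthwhile detail the paper leaves implicit.
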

\begin{proof}
If a matrix $A\in\mptn_L(T)$ has $\lambda_2(A) = 1$ and its characteristic set on $\{p,q\}$, then it can be generated by Algorithm~\ref{alg:typeiif} with a Type II Fiedler-like vector.  By Theorem~\ref{thm:typeiiall}, $\mul_A(\lambda) = 1$.  In general, any matrix $A\in\mptn_L(T)$ with $\lambda_2(A) = \lambda$ and its characteristic set on $\{p,q\}$ also has $\mul_A(\lambda) = 1$ by applying the arguments to $\frac{1}{\lambda}A$.  Conversely, let $\bx$ be a Type II Fiedler-like vector with the characteristic set $\{p,q\}$.  The output of Algorithm~\ref{alg:typeiif} is unique with the input $\bx$.  By Theorem~\ref{thm:typeiiall}, the matrix with a given algebraic connectivity $\lambda > 0$ and a Fiedler vector $\bx$ is unique.
\end{proof}

%cwp

\section{Characteristic contraction and subdivision}
\label{sec:connsub}

In this section, we consider the set of Type I weighted trees with its characteristic set on a vertex of degree $2$, denoted by $\wtot$, and the set of all Type II weighted trees, denoted by $\wtt$.  We show that there is a natural bijection between $\wtot$ and $\wtt$.  Note that when $(T,\bw)$ is a weighted tree in $\wtot$ or $\wtt$, its algebraic connectivity has multiplicity $1$ by Corollaries~\ref{cor:typeiunique} and \ref{cor:typeiiunique}, which was also observed in \cite{Fiedler90}.  In fact, the two corollaries provide a strong connection between the weighted Laplacian matrices and the Fiedler vectors.

Let $T$ be a tree.  Corollary~\ref{cor:typeiunique} gives a bijection between the weight assignments $\bw$ such that $(T,\bw)\in\wtot$ and the pairs $(\lambda, \vspan(\{\bx\}))$ such that $\lambda > 0$ and $\bx$ is a Type I Fiedler-like vector.  Similarly, Corollary~\ref{cor:typeiiunique} also gives a bijection between the weight assignments $\bw$ such that $(T,\bw)\in\wtt$ and the pairs $(\lambda, \vspan(\{\bx\}))$ such that $\lambda > 0$ and $\bx$ is a Type II Fiedler-like vector.  Figure~\ref{fig:otbijection} demonstrates some examples.  

Let $(T_1,\bw_1)$ be the weighted tree in Figure~\ref{fig:t1treechar}, removing vertices $8$ and $9$.  By Theorem~\ref{thm:typeif}, 
\[
    \bx_1 = (0, -2, -2, -1, 1.25, 1.875, 1.875)\trans
\]
is a Fiedler vector of $(T_1,\bw_1)$, so $(T_1,\bw_1)\in\wtot$.  On the other hand, let $(T_2,\bw_2)$ be the weighted tree in Figure~\ref{fig:t2treechar}.  Then we have seen that  
\[
    \bx_2 = (-2, -2, -1, 1.25, 1.875, 1.875)\trans
\]
is a Fiedler vector of $(T,\bw_2)$, so $(T_2,\bw_2)\in\wtt$.  Here we see that $\bx_1$ is obtained from $\bx_2$ by padding a $0$.  Naturally, there is a bijection between Type I Fiedler-like vectors with exact a $0$ entry at a vertex of degree $2$ and Type II Fiedler-like vectors.  With the help of Corollaries~\ref{cor:typeiiunique} and \ref{cor:typeiiunique}, we will see that this bijection also lead to a bijection between $\wtot$ and $\wtt$.  

\begin{figure}[h]
\centering
\begin{tikzpicture}
\coordinate (a) at (-3,2);
\coordinate (b) at (-3,-2);
\coordinate (c) at (3,2);
\coordinate (d) at (3,-2);
\draw[triangle 45-triangle 45] ([yshift=-1.5cm]a) --node[edge weight]{Corollary~\ref{cor:typeiunique}} ([yshift=0.5cm]b);
\draw[triangle 45-triangle 45] ([yshift=-1.5cm]c) --node[edge weight]{Corollary~\ref{cor:typeiiunique}} ([yshift=0.5cm]d);
\draw[triangle 45-triangle 45, bend left=45] ([yshift=0.8cm]a) to node[edge weight, align=center]{characteristic\\contraction/subdivision} ([yshift=0.8cm]c);
\draw[triangle 45-triangle 45, bend right=45] ([yshift=-2.3cm]b) to node[edge weight, align=center]{padding/removing\\zero} ([yshift=-2.3cm]d);

\begin{scope}[shift={(a)}]
\node[selected] (a1) at (0, 0) {};
\node[] (a2) at (-1, 0) {};
\node[] (a3) at (-2, 0.5) {};
\node[] (a4) at (-2, -0.5) {};
\node[] (a5) at (1, 0) {};
\node[] (a6) at (2, 0.5) {};
\node[] (a7) at (2, -0.5) {};
\draw (a3) --node[edge weight]{$2$} (a2) --node[edge weight]{$2$} (a4);
\draw (a6) --node[edge weight]{$3$} (a5) --node[edge weight]{$3$} (a7);
\draw (a2) --node[edge weight]{$5$} (a1) --node[edge weight]{$4$} (a5);
\node[rectangle, draw=none] at (0,-1) {$(T_1,\bw_1)\in\wtot$};
\end{scope}

\begin{scope}[shift={(b)}]
\node[selected, label={below:$0$}] (b1) at (0, 0) {};
\node[label={below:$-1$}] (b2) at (-1, 0) {};
\node[label={above:$-2$}] (b3) at (-2, 0.5) {};
\node[label={below:$-2$}] (b4) at (-2, -0.5) {};
\node[label={below:$1.25$}] (b5) at (1, 0) {};
\node[label={below:$1.875$}] (b6) at (2, 0.5) {};
\node[label={below:$1.875$}] (b7) at (2, -0.5) {};
\draw (b3) -- (b2) -- (b4);
\draw (b6) -- (b5) -- (b7);
\draw (b2) -- (b1) -- (b5);
\node[rectangle, draw=none, align=center] at (0,-1.5) {Type I\\Fiedler-like vector,\\$\lambda > 0$};
\end{scope}

\begin{scope}[shift={(c)}]
\node[selected] (c1) at (-0.5, 0) {};
\node[] (c2) at (-1.5, 0.5) {};
\node[] (c3) at (-1.5, -0.5) {};
\node[selected] (c4) at (0.5, 0) {};
\node[] (c5) at (1.5, 0.5) {};
\node[] (c6) at (1.5, -0.5) {};
\draw (c2) --node[edge weight]{$2$} (c1) --node[edge weight]{$2$} (c3);
\draw (c5) --node[edge weight]{$3$} (c4) --node[edge weight]{$3$} (c6);
\draw (c1) --node[edge weight]{$\frac{20}{9}$} (c4);
\node[rectangle, draw=none] at (0,-1) {$(T_2,\bw_2)\in\wtt$};
\end{scope}

\begin{scope}[shift={(d)}]
\node[selected, label={below:$-1$}] (d1) at (-0.5, 0) {};
\node[label={above:$-2$}] (d2) at (-1.5, 0.5) {};
\node[label={below:$-2$}] (d3) at (-1.5, -0.5) {};
\node[selected, label={below:$1.25$}] (d4) at (0.5, 0) {};
\node[label={above:$1.875$}] (d5) at (1.5, 0.5) {};
\node[label={below:$1.875$}] (d6) at (1.5, -0.5) {};
\draw (d2) -- (d1) -- (d3);
\draw (d5) -- (d4) -- (d6);
\draw (d1) -- (d4);
\node[rectangle, draw=none, align=center] at (0,-1.5) {Type II\\Fiedler-like vector,\\$\lambda > 0$};
\end{scope}

\end{tikzpicture}
\caption{A diagram illustrating the bijection between $\wtot$ and $\wtt$.}
\label{fig:otbijection}
\end{figure}

Suppose $(T_1,\bw_1)\in\wtot$ is a weighted tree such that it has a Type I Fiedler vector whose characteristic set is on a vertex $r$ of degree $2$.  Let $p,q$ be the neighbors of $r$.  Let $w_p = \bw_1(p,r)$ and $w_q = \bw_1(q,r)$.  Define the tree $T_2$ as obtained from $T_1$ by replacing $\{p,r\}$ and $\{q,r\}$ with $\{p,q\}$.  Also, define the weight assignment $\bw_2$ of $T_2$ by 
\[
    \begin{aligned}
    \bw_2(p,q) &= \frac{w_pw_q}{w_p + w_q}, \\
    \bw_2(e) &= \bw_1(e) \text{ if } e \neq \{p,q\}.
    \end{aligned}
\]
We say $(T_2,\bw_2)$ is the \emph{characteristic contraction} of $(T_1,\bw_1)$.  One may verify that the characteristic contraction is the composition of computing the Fiedler vector $\bx_1$ and algebraic connectivity $\lambda > 0$ of $(T_1,\bw_1)$, removing the $0$ from $\bx_1$ to obtain $\bx_2$, and using $\bx_2$ and $\lambda > 0$ to reconstruct $(T_2,\bw_2)$.   

On the other hand, suppose $(T_2,\bw_2)\in\wtt$ is a weighted tree such that it has a Type II Fiedler vector $\bx$.  Let $\{p,q\}$ be the characteristic set, $w = \bw_2(p,q)$ and $x_i$ the $i$-th entry of $\bx$.  Define the tree $T_1$ as obtained from $T_2$ by subdividing $\{p,q\}$ into $\{p,r\}$ and $\{q,r\}$.  Also, define a weight assignment $\bw_1$ of $T_1$ by 
\[
    \begin{aligned}
    \bw_1(p,r) &= w \cdot (1 - \frac{x_q}{x_p}), \\
    \bw_1(q,r) &= w \cdot (1 - \frac{x_p}{x_q}), \text{ and }\\
    \bw_1(e) &= \bw_2(e) \text{ if } e \neq \{p,q\}.
    \end{aligned}
\]
By Corollary~\ref{cor:typeiiunique}, the algebraic connectivity in this case has multiplicity $1$.  Therefore, all Fiedler vector of $(T_2,\bw_2)$ is a multiple of $\bx$, so the construction of $(T_1,\bw_1)$ does not depend on the choice of $\bx$ and is well-defined.  With this definition, we say $(T_1,\bw_1)$ is the \emph{characteristic subdivision} of $(T_2,\bw_2)$.  Similarly, one may verify that the characteristic subdivision is the composition of computing the Fiedler vector $\bx_2$ and algebraic connectivity $\lambda > 0$ of $(T_2,\bw_2)$, padding a $0$ into $\bx_2$ to obtain $\bx_1$, and using $\bx_1$ and $\lambda > 0$ to reconstruct $(T_1,\bw_1)$.  

As we have seen that all mentioned steps are bijections, we have the following consequence.

\begin{theorem}
\label{thm:transform}
Characteristic contraction and characteristic subdivision preserve the algebraic connectivity and are the inverse of each other.  Let $(T_1,\bw_1)\in\wtot$ and $(T_2,\bw_2)$ its characteristic contraction, where $r$ is the vertex in $V(T_1)\setminus V(T_2)$.  Then the correspondence between the Fiedler vectors is as follows. 
\begin{itemize}
\item If $\bx_1$ is a Fiedler vector of $(T_1,\bw_1)$, then by removing the $r$-th entry, the resulting vector $\bx_2$ is a Fiedler vector of $(T_2,\bw_2)$.
\item If $\bx_2$ is a Fiedler vector of $(T_2,\bw_2)$, then by padding a new $0$ at the $r$-th entry, the resulting vector $\bx_1$ is a Fiedler vector of $(T_1,\bw_1)$. 
\end{itemize}   
\end{theorem}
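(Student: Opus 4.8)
The plan is to realise characteristic contraction as a composition of three bijections already established in the paper, so that the theorem's three claims drop out of the decomposition. Fix $T_1$ with a degree-$2$ vertex $r$, neighbours $p,q$, and $T_2=(T_1-r)+\{p,q\}$. First, Corollary~\ref{cor:typeiunique} says that sending $\bw_1$ with $(T_1,\bw_1)\in\wtot$ (characteristic set $\{r\}$) to the pair $(\lambda,\vspan(\{\bx_1\}))$ of algebraic connectivity and Fiedler line is a bijection onto the set of such pairs with $\bx_1$ Type I Fiedler-like and characteristic set $\{r\}$; similarly Corollary~\ref{cor:typeiiunique} makes $\bw_2\leftrightarrow(\lambda,\vspan(\{\bx_2\}))$ a bijection for $(T_2,\bw_2)\in\wtt$ with $\bx_2$ Type II Fiedler-like. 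The middle map is ``delete, resp.\ restore, the entry at $r$''. To see it is a bijection between the two families of Fiedler lines I would argue: a Type I Fiedler-like vector whose characteristic set $\{r\}$ sits at a degree-$2$ vertex has \emph{exactly one} zero, namely $x_r$ --- indeed each of the two branches at $r$ is, along paths from $r$, either strictly monotone (hence single-signed) or constantly zero, and $\bone\trans\bx=0$ rules out one branch being zero as well as the two single-signed branches having the same sign, so the two branches are single-signed of opposite signs, $x_px_q<0$, and deleting $x_r=0$ produces precisely a Type II Fiedler-like vector of $T_2$ with characteristic set $\{p,q\}$; the inverse (``subdivide $\{p,q\}$, insert a $0$'') is the symmetric elementary check.

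Next I would show that the explicit characteristic-contraction rule equals the composite $\bw_1\mapsto(\lambda,\vspan(\{\bx_1\}))\mapsto(\lambda,\vspan(\{\bx_2\}))\mapsto\bw_2$, and characteristic subdivision its reverse. Let $A_2$ be the weighted Laplacian of the tree given by $\bw_2(p,q)=\frac{\bw_1(p,r)\bw_1(q,r)}{\bw_1(p,r)+\bw_1(q,r)}$, $\bw_2(e)=\bw_1(e)$ otherwise. Lemma~\ref{lem:charcontract} (with $\lambda$ in place of $1$ and the zero entry at $r$) gives exactly $A_2\bx_2=\lambda\bx_2$, where $\bx_2=\bx_1$ with the $r$-entry deleted. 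By the first paragraph $\bx_2$ is Type II Fiedler-like, and then $\lambda$ is forced to be the algebraic connectivity of $(T_2,\bw_2)$: any matrix in $\mptn_L(T_2)$ carrying a Type II Fiedler-like eigenvector is recovered by Algorithm~\ref{alg:typeiif} (this is what the proof of Theorem~\ref{thm:typeiiall} actually shows, since it only uses $A\bx=\lambda\bx$, not that $\lambda=\lambda_2$), and that algorithm's output has $\lambda_2$ equal to the eigenvalue in question by Theorem~\ref{thm:typeiif}. Hence $(T_2,\bw_2)\in\wtt$ with algebraic connectivity $\lambda$ and Fiedler vector $\bx_2$, so by Corollary~\ref{cor:typeiiunique} its weight is the one the reconstruction step returns; thus the explicit rule is the composite. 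The dual statement for characteristic subdivision is verified the same way through the Type I analogues (Theorems~\ref{thm:typeif}, \ref{thm:typeiall} and Corollary~\ref{cor:typeiunique}), confirming that $\bw_1(p,r)=w(1-x_q/x_p)$, $\bw_1(q,r)=w(1-x_p/x_q)$ is the forced choice.

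Given all this, the theorem is immediate: both maps are composites of bijections running through the same middle set in opposite directions, hence mutually inverse bijections; the value $\lambda$ is carried unchanged through every stage, hence preserved; and the correspondence between Fiedler vectors is exactly the middle ``delete/restore the $r$-entry'' step, which is precisely the two bullet points. As a cross-check one can also verify the two explicit weight formulas are inverse to each other directly, substituting $w=\frac{w_pw_q}{w_p+w_q}$ and using the identity $w_px_p=-w_qx_q$ coming from the $r$-th row of $A_1\bx_1=\lambda\bx_1$.

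I expect the main obstacle to be the middle step above --- verifying that the purely combinatorial weight formulas coincide with the reconstruction maps, and in particular that $(T_2,\bw_2)$ really lands in $\wtt$ rather than merely possessing $\bx_2$ as some eigenvector. That upgrade, from ``eigenvector'' to ``Fiedler vector'', is where Lemma~\ref{lem:charcontract} together with the spectral content packaged in Theorems~\ref{thm:typeif}--\ref{thm:typeiiall} does the real work; everything else is an appeal to Corollaries~\ref{cor:typeiunique}, \ref{cor:typeiiunique} or the elementary type-transfer bookkeeping of the first paragraph.
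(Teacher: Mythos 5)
Your proposal is correct and follows essentially the same route as the paper: the paper also realizes characteristic contraction/subdivision as the composite of the bijections from Corollaries~\ref{cor:typeiunique} and \ref{cor:typeiiunique} with the ``delete/pad the zero at $r$'' map between Type~I and Type~II Fiedler-like vectors, and reads off the theorem from that decomposition. The only difference is that the paper leaves the middle verification as ``one may verify,'' whereas you supply the details (the exactly-one-zero/opposite-sign argument for the branches, and the use of Lemma~\ref{lem:charcontract} plus the uniqueness corollaries to match the explicit weight formulas with the reconstruction maps), which is a faithful filling-in rather than a different proof.
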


%========================================================

\section{The inverse Fiedler vector problem of a cycle}
\label{sec:cycle}

In this section, we study the potential Fiedler vector of a cycle.  Interestingly, we found that for matrices $A\in\mptn_L(C_n)$, eigenvectors with respect to $\lambda_2(A)$ or $\lambda_3(A)$ share some common structure and are hard to be distinguished.

We usually label the vertices of $C_n$ by the elements $0, \ldots, n-1$ in the cyclic group $\mathbb{Z}_n$ of order $n$, following the cycle order.  A \emph{closed interval} $[i,j]$ stands for the vertices $\{i, i+1, \ldots, j\}$, while an \emph{open interval} $(i,j)$ stands for $\{i+1, i+2, \ldots, j-1\}$.  Thus, intervals like $(i,j]$ and $[i,j)$ are naturally defined.  

First note that this problem is simple for $C_3$ as it is a complete graph.  

\begin{observation}
Let $K_n$ be the complete graph with $n \geq 2$.  A vector $\bx$ is a Fiedler vector of $K_n$ if and only if $\bone\trans\bx = 0$ and $\bx \neq \bzero$.  
\end{observation}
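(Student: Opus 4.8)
The plan is to prove the two implications separately. For necessity I would invoke only the standard facts about weighted Laplacian matrices recalled in the preliminaries; for sufficiency I would exhibit a single explicit matrix, the classical (unweighted) Laplacian of $K_n$, having the prescribed vector as a Fiedler vector.

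First, for the forward direction: suppose $\bx$ is a Fiedler vector of $K_n$, so that $A\bx = \lambda_2(A)\bx$ for some $A\in\mptn_L(K_n)$. Since $n\geq 2$, the graph $K_n$ is connected, hence $\lambda_2(A) > 0$ and $A$ has nullity $1$ with $\ker A = \vspan(\{\bone\})$ (because $A\bone = \bzero$). As $A$ is symmetric, eigenvectors for distinct eigenvalues are orthogonal, so $\bx$ is orthogonal to $\ker A$, that is, $\bone\trans\bx = 0$; moreover $\bx\neq\bzero$ since it is an eigenvector. This establishes necessity.

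Next, for the reverse direction: given $\bone\trans\bx = 0$ and $\bx\neq\bzero$, I would take $L = nI - J\in\mptn_L(K_n)$, the classical Laplacian of $K_n$, where $J$ is the all-ones matrix. For any $\by$ with $\bone\trans\by = 0$ one computes $L\by = n\by - (\bone\trans\by)\bone = n\by$; together with $L\bone = \bzero$ this shows that $\spec(L)$ consists of $0$ with multiplicity $1$ and $n$ with multiplicity $n-1$, so $\lambda_2(L) = n$ and its eigenspace is exactly $\{\bone\}^\perp$. Since $\bx$ lies in this eigenspace, it is a Fiedler vector of $K_n$, completing the argument.

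I do not expect a genuine obstacle here: the only conceptual point is that the eigenvalue $n$ of the Laplacian of $K_n$ has multiplicity $n-1$, so the whole hyperplane $\bone^\perp$ is a single eigenspace and no tuning of the edge weights is needed. If one preferred to avoid the explicit diagonalization of $J$, the same conclusion could be reached by the Cauchy interlacing argument used for trees earlier in the paper, but the direct computation is shorter.
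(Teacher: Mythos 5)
Your proof is correct and follows essentially the same route as the paper: the necessity is the standard orthogonality-to-$\ker A$ argument, and the sufficiency uses the classical Laplacian of $K_n$ with spectrum $\{0, n^{(n-1)}\}$, exactly as in the paper's proof (you merely make the computation $L = nI - J$ explicit). No issues.
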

\begin{proof}
A Fiedler vector of a complete graph must have $\bone\trans\bx = 0$ and $\bx \neq 0$.  Conversely, the classical Laplacian matrix $A$ of $K_n$ has spectrum $\{0,n^{(n-1)}\}$, so any vector $\bx$ with $\bone\trans\bx = 0$ and $\bx = 0$ is a Fiedler vector of $A$.
\end{proof}

To describe the possible Fiedler vector of a cycle, we will frequently use Lemma~\ref{lem:sum}, which is a well-known formula obtained from taking the sum of entries in $S$ from the both sides of $A\bx = \lambda\bx$; see, e.g., \cite[Statement~3.9]{Fiedler75class}.  

\begin{lemma}
{\rm\cite[Statement~3.9]{Fiedler75class}}
\label{lem:sum}
Let $(G,\bw)$ be a weighted graph, $A$ its weighted Laplacian matrix, and $\bx$ an eigenvector of $A$ with respect to $\lambda > 0$.  Then for any subset $S\in V(G)$, 
\[
    \sum_{\substack{
        \{i,j\}\in E(G) \\
        i\in S,\ j\notin S
    }} \bw(i,j)(x_i - x_j) =
    \lambda\sum_{i\in S}x_i,
\]
where $x_i$ is the $i$-th entry of $\bx$.
\end{lemma}

Next we show that the eigenvectors corresponding to $\lambda_2(A)$ or $\lambda_3(A)$ necessarily have nice structure for any $A\in\mptn_L(G)$.

\begin{definition}
\label{def:periodic}
Let $\bx = \begin{bmatrix} x_i \end{bmatrix} \in\mathbb{R}^{V(C_n)}$.  We say $\bx$ is \emph{periodic} if $\bone\trans\bx = 0$ and has the following properties.
\begin{enumerate}[label={\rm\arabic*.}]
\item Each of $C_n[V_+]$ and $C_n[V_-]$ is a path, while $C_n[V_0]$ is composed of isolated vertices with $0 \leq |V_0| \leq 2$.
\item Along the path $C_n[V_+]$, the values of $\bx$ are strictly increasing, followed by at most two maximum values, and then strictly decreasing.
\item Along the path $C_n[V_-]$, the values of $\bx$ are strictly decreasing, followed by at most two maximum values, and then strictly increasing.
\end{enumerate}
Here $V_+$, $V_-$, and $V_0$ are the set of indices where $\bx$ is positive, negative, and zero, respectively.
\end{definition}

\begin{example}
\label{ex:pnotb}
With $V(C_{12})$ labeled by $\mathbb{Z}_{12}$ following the cycle order, the vector  
\[
    (3,5,4,3,2,0,-3,-4,-4,-3,-2,-1)\trans
\]
is periodic.
\end{example}

\begin{lemma}
\label{lem:periodic}
Let $C_n$ be the cycle on $n$ vertices.  Then an eigenvector $\bx$ of $A$ with respect to $\lambda_2(A)$ or $\lambda_3(A)$ for some $A\in\mptn_L(C_n)$ is necessarily periodic.
\end{lemma}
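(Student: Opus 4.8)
The plan is to use the same two tools that the excerpt has already emphasized: Lemma~\ref{lem:sum} applied to suitable subsets $S$, and the Cauchy interlacing theorem, together with the sign-change/nodal-domain bounds that are standard for eigenvectors of weighted Laplacians. The overall strategy is to first establish the ``sign pattern'' part of Definition~\ref{def:periodic} (item~1, that $C_n[V_+]$ and $C_n[V_-]$ are each paths or empty and $|V_0|\le 2$), and then the ``monotonicity'' part (items~2 and~3) by a local argument at each vertex.

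\emph{Sign pattern.} Let $\bx$ be an eigenvector for $\lambda=\lambda_2(A)$ or $\lambda=\lambda_3(A)$ with $\lambda>0$. The key quantity is the number of ``sign intervals'' of $\bx$ around the cycle. On a cycle, removing the (isolated) vertices of $V_0$ and the edges joining a positive vertex to a negative vertex leaves a disjoint union of arcs, alternating in sign; call the number of maximal positive arcs $p$ and the number of maximal negative arcs; by the cyclic structure these two counts are equal, say to $m$. I would show $m\le 2$, which is equivalent to item~1 once one also rules out $|V_0|\ge 3$. For this, suppose $m\ge 3$. Using Lemma~\ref{lem:sum} on each maximal positive arc $S$ (together with its boundary zero vertices, if any), the left-hand side telescopes to a sum of boundary terms of the form $\bw(i,j)(x_i-x_j)$ where $x_i>0$ sits just inside the arc and $x_j\le 0$ sits just outside, hence the left side is positive; so $\sum_{i\in S}x_i>0$, and the restriction of $\bx$ to that arc, extended by zero, is a vector $\by_S$ with $\bone\trans\by_S\ne 0$ in general but, more to the point, $\by_S\trans A\by_S / \by_S\trans\by_S$ can be bounded using the edges cut. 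The cleaner route, which I expect to use, is the classical Courant–Fischer / interlacing argument: the indicator-type vectors supported on the $m$ sign arcs span an $m$-dimensional space on which the Rayleigh quotient of $A$ is at most $\lambda$ (because within each arc $\bx$ itself witnesses eigenvalue $\lambda$, and gluing arcs only adds nonnegative ``cut'' contributions in the right direction after a sign flip); hence $\lambda_m(A)\le\lambda$. If $\lambda=\lambda_2(A)$ this forces $m\le 2$ immediately; if $\lambda=\lambda_3(A)$ it forces $m\le 3$, and the case $m=3$ must be excluded separately — this is the one genuinely delicate point. To rule out $m=3$ when $\lambda=\lambda_3(A)$, I would show that $m=3$ would actually make $\lambda$ at most $\lambda_3(A)$ with equality only in a degenerate way that contradicts $\bx$ being an eigenvector (e.g., one of the three arc-vectors would have to be an eigenvector for $\lambda_3$ supported on a proper arc, whose complement is a path, and a Perron/positivity argument on that path's Dirichlet matrix — as in Theorem~\ref{thm:dirichletbi} — shows its smallest Dirichlet eigenvalue is too large). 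An equivalent and perhaps smoother formulation: at most one vertex of $C_n$ can be a Parter-type vertex for $\lambda$ here, in the spirit of Lemma~\ref{lem:charparter}, and having three sign arcs would force two ``independent'' such vertices. The bound $|V_0|\le 2$ comes out of the same counting: each zero vertex adjacent to a nonzero vertex behaves like a sign-arc boundary, and is also Parter-type by Lemma~\ref{lem:charparter}, so too many of them again pushes up the eigenvalue index.

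\emph{Monotonicity within an arc.} Once $C_n[V_+]$ is known to be a path, I would prove item~2 by a purely local comparison at each internal vertex of that path. Take a vertex $i$ in $V_+$ with neighbors $i-1,i+1$ along the cycle. The $i$-th row of $A\bx=\lambda\bx$ reads $\bw(i,i-1)(x_i-x_{i-1})+\bw(i,i+1)(x_i-x_{i+1})=\lambda x_i>0$, so $x_i$ is strictly larger than at least one of $x_{i-1},x_{i+1}$; it cannot be a strict local minimum. Applying Lemma~\ref{lem:sum} to an initial segment $S=[a,i]$ of the positive path starting at the low end $a$: the left side is $\bw(a-1,a)(x_a-x_{a-1})+\bw(i,i+1)(x_i-x_{i+1})$, and I would argue inductively from the end of the arc that the partial sums $\sum_{k\in S}x_k$ increase as $S$ grows toward the interior and then decrease, which translates into: $\bx$ strictly increases, plateaus on at most two vertices at the top, then strictly decreases. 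The ``at most two maxima'' is exactly the slack allowed because a two-vertex plateau can occur when $\lambda$ is distributed across the plateau without any strict inequality forced; a three-vertex plateau would force the middle plateau vertex $i$ to satisfy $\bw(i,i-1)(x_i-x_{i-1})+\bw(i,i+1)(x_i-x_{i+1})=\lambda x_i$ with $x_{i-1}=x_i=x_{i+1}$, giving $0=\lambda x_i>0$, a contradiction — so strictly inside a plateau is impossible, and a plateau can have length at most $2$. Item~3 is the mirror image of item~2 applied to $C_n[V_-]$ (equivalently, apply item~2 to $-\bx$, which is an eigenvector for the same $\lambda$, noting that $-\bx$ has $V_+$ and $V_-$ interchanged).

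\emph{Where the difficulty lies.} The routine parts are the local row-equation arguments for monotonicity and the observation $\bone\trans\bx=0$ (which is automatic for $\lambda>0$). The real obstacle is the sign-pattern count, and specifically excluding the case of three sign arcs (or $|V_0|=3$) when $\lambda=\lambda_3(A)$: interlacing alone only gives $\lambda_3(A)\le\lambda$, i.e.\ $\lambda=\lambda_3(A)$, which is not a contradiction, so one must extract extra rigidity. I expect to handle this by combining a Parter-vertex argument (Lemma~\ref{lem:charparter}, generalized to the cycle by cutting at a zero vertex or across a sign change to expose a path, then invoking the Dirichlet-matrix positivity of Theorem~\ref{thm:dirichletbi}) with the Perron–Frobenius comparison of Theorem~\ref{thm:pf} to show that a third arc would force $\lambda$ to coincide with a Dirichlet eigenvalue on a path that is strictly larger than $\lambda_3(A)$ — a contradiction. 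This is the technical heart of the lemma and the step I would write out most carefully.
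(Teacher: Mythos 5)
Your monotonicity argument (items 2 and 3 of Definition~\ref{def:periodic}) is essentially the paper's: the row equation at an interior vertex of $V_+$, which is Lemma~\ref{lem:sum} with $S=\{i\}$, forbids any weak local minimum inside the positive arc and caps a plateau at two vertices. That part is fine. The genuine gap is in the sign-pattern step, exactly where you flag it. On a cycle the positive and negative arcs alternate, so their counts are equal and the total number of sign arcs is even; the case you must exclude is therefore not ``three sign arcs'' but \emph{two positive and two negative} arcs (four in total). Your Rayleigh-quotient construction with the arc-restricted test vectors only yields $\lambda_{4}(A)\leq\lambda$, which for $\lambda=\lambda_3(A)$ gives $\lambda_3(A)=\lambda_4(A)$ --- not a contradiction, since eigenvalue multiplicity $2$ is attainable in $\mptn_L(C_n)$ (indeed the zero forcing number of $C_n$ is $2$, so multiplicity can reach but not exceed $2$, and this bound does not rule the case out). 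Your proposed rescue via Parter vertices and Dirichlet Perron values is not carried out and is not obviously salvageable in the form you sketch. The paper closes this case differently: it invokes Fiedler's bound on the number of components of $C_n[V_+\cup V_0]$ together with the discrete \emph{weak nodal domain theorem} \cite[Theorem~3.1]{BLS07}, which bounds the number of weak nodal domains of a $\lambda_3$-eigenvector by $3$; the parity forced by alternation on the cycle then pushes the count of four weak nodal domains down to two, i.e.\ one positive arc and one negative arc. You need either that theorem (whose proof extracts the strictness your semidefinite argument lacks) or an equivalent rigidity statement.

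A secondary, smaller gap: you assume the vertices of $V_0$ are isolated in $C_n[V_0]$ (``removing the (isolated) vertices of $V_0$'') rather than proving it. The paper derives this from the fact that two consecutive vertices of $C_n$ form a zero forcing set (equivalently, two adjacent zero entries propagate to $\bx=\bzero$ via $(A-\lambda I)\bx=\bzero$), and then pins $|V_0|\leq 2$ by applying Lemma~\ref{lem:sum} with $S=\{i\}$ at a zero vertex to show each such vertex has one neighbor in $V_+$ and one in $V_-$, hence sits at one of the two boundaries between the unique positive and negative arcs. Your ``Parter-type vertices push up the eigenvalue index'' heuristic for bounding $|V_0|$ would need to be made precise along these lines.
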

\begin{proof}
Let $\bx = \begin{bmatrix} x_i \end{bmatrix}\in\mathbb{R}^{V(C_n)}$ be an eigenvector of $A$ with respect to $\lambda \in \{\lambda_2(A), \lambda_3(A)\}$ for some matrix $A\in\mptn_L(C_n)$.  We may assume that $A$ is the weighted Laplacian matrix of $(C_n,\bw)$.  Since $\bone$ is the eigenvector of $A$ with respect to $0$, we have $\bone\trans\bx = 0$ by the spectral theorem. 
Since any two consecutive vertices on $C_n$ form a zero forcing set, we know $C_n[V_0]$ is composed of isolated vertices by \cite{AIM}.  (Equivalently, if two consecutive entries of $\bx$ are zero, then $(A - \lambda I)\bx = \bzero$ implies $\bx = \bzero$.)  Moreover, if $x_i\in V_0$, then by applying Lemma~\ref{lem:sum} with $S = \{i\}$ we observe that $i$ has exactly one neighbor in $V_+$ and one neighbor in $V_-$.

Since $\bone\trans\bx = 0$, both $V_+$ and $V_-$ are not empty.  By \cite[Corollary~2.2]{Fiedler75class}, $C_n[V_+\cup V_0]$ has either one or two components since $\lambda\in\{\lambda_2(A), \lambda_3(A)\}$.  If it has two components, each component is surrounded by $V_-$, so each component contains some vertex in $V_+$.  (If a component only contains vertices in $V_0$, then there are two consecutive vertices in $V_0$ or a vertex in $V_0$ with two neighbors in $V_-$.  Either case is impossible.)  By the structure of 
a cycle, this implies that $C_n[V_-\cup V_0]$ has two components.  In total there are four components in $C_n[V_+\cup V_0]$ and $C_n[V_-\cup V_0]$ (weak nodal domains).  This is impossible by the Courant's nodal domain theorem \cite[Theorem~3.1]{BLS07} since $\bx$ is an eigenvector of $A$ with respect to $\lambda_2(A)$ or $\lambda_3(A)$.  Therefore, $C_n[V_+\cup V_0]$ has one component, which implies $C_n[V_-]$ has one component.  By a similar argument, each of $C_n[V_-\cup V_0]$ and $C_n[V_+]$ has one component.  Therefore, $C_n[V_+]$ and $C_n[V_-]$ are paths, and $0 \leq |V_0| \leq 2$.  

Since $C_n[V_+]$ is a path, we may assume the vertices on $V_+$ are $1, \ldots, k$, following the path order.  We note that a basin shape where $x_{i-1} \geq x_i$ and $x_i \leq x_{i+1}$ cannot happen for any $i$ with $1 < i < k$, because applying Lemma~\ref{lem:sum} with $S = \{i\}$ gives a contradiction  
\[
    \bw(i,i-1)(x_i - x_{i-1}) + \bw(i,i+1)(x_i - x_{i+1}) \leq 0 < \lambda x_i.
\]
Let $p$ be the smallest integer such that $x_p \geq x_{p+1}$.  Applying the above observation with $i = p+1$ gives $x_{p+1} > x_{p+2}$.  Inductively applying the observation with $i = p+1, \ldots, k-1$, we know $x_{p+1}, \ldots, x_k$ are strictly decreasing.  By our choice of $p$, $x_1,\ldots, x_p$ are strictly increasing.  Depending on $x_p = x_{p+1}$ or not, there may be two or one maximum values.  A similar argument also applies to $C_n[V_-]$.
\end{proof}

Lemma~\ref{lem:sum} guarantees further structure on a periodic vector. 

\begin{definition}
\label{def:balanced}
Let $\bx = \begin{bmatrix} x_i \end{bmatrix}\in\mathbb{R}^{V(C_n)}$ be a periodic vector on $C_n$.  A vertex $i$ is called a \emph{peak} if $x_i = \max(\bx)$ and a \emph{valley} if $x_i = \min(\bx)$.  If $\bx$ has a unique peak $p$, set $p' = p$; if $\bx$ has two peaks $p, p + 1$, set $p' = p + 1$.  If $\bx$ has a unique valley $q$, set $q' = q$; if $\bx$ has two valleys $q, q + 1$, set $q' = q + 1$.  We say $\bx$ is \emph{balanced} if 
\[
    \sum_{i\in (p,q]} x_i < 0 < \sum_{i\in [p',q')} x_i
\]
when $\bx$ has a unique peak or a unique valley, and 
\[
    \sum_{i\in (p,q]} x_i = 0 = \sum_{i\in [p',q')} x_i
\]
when $\bx$ has two peaks and two valleys.
\end{definition}

\begin{example}
The vector in Example~\ref{ex:pnotb} has $p = p' = 1$, $q = 7$, and $q' = 8$.  Since the sum of entries over $(p,q]$ is positive, the vector is not balanced.  

In contrast, the vector  
\[
(1,2,3,2,1,0,-2,-5,-2,0)\trans
\]
has $p = p' = 2$ and $q = q' = 7$ and
is a periodic and balanced vector on $C_{10}$.  
\end{example}

\begin{lemma}
\label{lem:balanced}
Let $C_n$ be the cycle on $n$ vertices.  Then a periodic eigenvector $\bx$ for some $A\in\mptn_L(G)$ with respect to a positive eigenvalue is necessarily balanced.
\end{lemma}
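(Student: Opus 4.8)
The plan is to feed the two arcs that appear in Definition~\ref{def:balanced}, namely $(p,q]$ and $[p',q')$, into Lemma~\ref{lem:sum} and read off the sign of each partial sum directly from the monotonicity of a periodic vector near its peak and valley blocks. The eigenvalue hypothesis will enter only through the single inequality $\lambda>0$.

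First I would fix notation: let $\bx$ be a periodic eigenvector of the weighted Laplacian $A$ of $(C_n,\bw)$ with eigenvalue $\lambda>0$, and adopt $p,p',q,q'$ from Definition~\ref{def:balanced}. Using Definition~\ref{def:periodic}, when one traverses $C_n$ in increasing index from the peak block ($\{p\}$ or $\{p,p+1\}$) to the valley block ($\{q\}$ or $\{q,q+1\}$) the entries of $\bx$ are non-increasing, and they are strictly decreasing except possibly across the single flat step inside a two-element peak or valley block; the mirror statement holds on the return arc. From this I extract four sign facts: $x_{p+1}-x_p\le 0$ and $x_{p'}-x_{p'-1}\ge 0$, each an equality exactly when $\bx$ has two peaks; and $x_q-x_{q+1}\le 0$ and $x_{q'-1}-x_{q'}\ge 0$, each an equality exactly when $\bx$ has two valleys. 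Each of these is immediate once one checks which of the four relevant vertices lies inside a peak or valley block.

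Next I would apply Lemma~\ref{lem:sum}. Since $(p,q]=\{p+1,\ldots,q\}$ is a nonempty contiguous arc of $C_n$ distinct from $V(C_n)$ (using $p\neq q$, which holds because $\max(\bx)>0>\min(\bx)$), its only boundary edges are $\{p,p+1\}$ and $\{q,q+1\}$, so
\[
\bw(p,p+1)(x_{p+1}-x_p)+\bw(q,q+1)(x_q-x_{q+1})=\lambda\sum_{i\in (p,q]}x_i .
\]
By the sign facts the left-hand side is $\le 0$ and vanishes exactly when $\bx$ has two peaks and two valleys, so the same is true of $\sum_{i\in(p,q]}x_i$ because $\lambda>0$. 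Similarly $[p',q')=\{p',\ldots,q'-1\}$ has boundary edges $\{p'-1,p'\}$ and $\{q'-1,q'\}$, whence
\[
\bw(p'-1,p')(x_{p'}-x_{p'-1})+\bw(q'-1,q')(x_{q'-1}-x_{q'})=\lambda\sum_{i\in [p',q')}x_i ,
\]
with left-hand side $\ge 0$, vanishing exactly in the two-peaks-two-valleys case, hence so is $\sum_{i\in[p',q')}x_i$.

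Finally I would split into the two cases of Definition~\ref{def:balanced}. If $\bx$ has a unique peak or a unique valley, the two displays give $\sum_{i\in(p,q]}x_i<0<\sum_{i\in[p',q')}x_i$. If $\bx$ has two peaks and two valleys, then $p'=p+1$ and $q'=q+1$, so $[p',q')=\{p+1,\ldots,q\}=(p,q]$, and both sums equal $0$. In either case $\bx$ is balanced. I expect the only real care needed to be the index bookkeeping in the second paragraph — matching $p,p+1,p',p'-1,q,q+1,q',q'-1$ to the monotone pieces and confirming that $(p,q]$ and $[p',q')$ really are arcs with the stated boundary edges in every degenerate configuration — rather than anything substantive.
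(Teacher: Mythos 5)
Your proof is correct and is essentially the paper's own argument: both apply Lemma~\ref{lem:sum} to the arcs $(p,q]$ and $[p',q')$, read the signs of the two boundary terms $x_{p+1}-x_p$, $x_q-x_{q+1}$ (resp.\ $x_{p'}-x_{p'-1}$, $x_{q'-1}-x_{q'}$) from the periodic structure, and divide by $\lambda>0$, with the two-peaks-two-valleys case turning every inequality into an equality. Your bookkeeping of exactly when each boundary term vanishes is slightly more explicit than the paper's, but the substance is identical.
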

\begin{proof}
Let $\bx = \begin{bmatrix} x_i \end{bmatrix}\in\mathbb{R}^{V(C_n)}$ be a periodic vector satisfying $A\bx = \lambda \bx$ for some $A\in\mptn_L(C_n)$ and $\lambda > 0$. 
 Let $p,p',q,q'$ be as defined in Definition~\ref{def:balanced}.    

We first consider the case when $\bx$ has a unique peak or a unique valley.  We apply Lemma~\ref{lem:sum} with $S = (p,q]$ to obtain an equation.  Observe that
\[
    x_{p + 1} - x_{p} \leq 0 \text{ and }
    x_{q} - x_{q + 1} \leq 0 
\]
by the choice of $p$ and $q$, and one of them is nonzero by our assumption.  This means the left-hand side the the equation in Lemma~\ref{lem:sum} is negative.  Thus, we have   
\[
    \lambda\sum_{i\in (p,q]} x_i < 0.
\]
On the other hand, we apply Lemma~\ref{lem:sum} again with $S = [p',q')$ to get  
\[
    0 < \lambda\sum_{i\in [p',q')} x_i
\]
since 
\[
    x_{p'} - x_{p' - 1} \geq 0 \text{ and } x_{q' - 1} - x_{q'} \geq 0
\]
and they cannot be zero simultaneously.  This case is completed since $\lambda > 0$.  

For the other case when $\bx$ has two peaks and two valleys, the inequalities above becomes equalities because  
\[
    \begin{aligned}
    x_{p + 1} - x_{p} &= x_{q} - x_{q + 1} = 0 \text{ and } \\
    x_{p'} - x_{p' - 1} &=
    x_{q' - 1} - x_{q'} = 0.
    \end{aligned}
\]
Thus, the inequalities in Definition~\ref{def:balanced} also become equalities in this case.  
\end{proof}

\begin{theorem}
Let $C_n$ be the cycle on $n$ vertices.  Then $\bx$ is an eigenvector of $A\in\mptn_L(C_n)$ with respect to $\lambda_2(A)$ or $\lambda_3(A)$ if and only if $\bx$ is periodic and balanced.
\end{theorem}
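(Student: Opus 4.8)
The forward implication is already in hand: by Lemma~\ref{lem:periodic} an eigenvector of $A\in\mptn_L(C_n)$ for $\lambda_2(A)$ or $\lambda_3(A)$ is periodic, and since $C_n$ is connected we have $0<\lambda_2(A)\le\lambda_3(A)$, so Lemma~\ref{lem:balanced} shows such an eigenvector is also balanced. The plan is therefore to prove the converse: given a periodic and balanced $\bx$, construct some $A\in\mptn_L(C_n)$ for which $\bx$ is an eigenvector for the second or third smallest eigenvalue.

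The first step is a direct construction of a weighted Laplacian with $A\bx=\bx$. Label the edges of $C_n$ cyclically as $e_0,\dots,e_{n-1}$, write $w_i>0$ for the (unknown) weight of $e_i$, and let $\delta_i$ be the difference of $\bx$ across $e_i$. The equation $(A\bx)_v=x_v$ at a vertex $v$ incident to $e_{i-1}$ and $e_i$ reads $w_i\delta_i-w_{i-1}\delta_{i-1}=x_v$; setting $\beta_i:=w_i\delta_i$ turns this into the telescoping relation $\beta_i-\beta_{i-1}=x_v$, which, because $\bone\trans\bx=0$ supplies the compatibility around the cycle, determines $\beta_i$ up to one additive constant $c$. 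Since $w_i=\beta_i/\delta_i$, I then need $\beta_i$ and $\delta_i$ to have the same sign on every edge. Periodicity dictates the sign of $\delta_i$ (positive on the arc descending from a peak to a valley, negative on the complementary ascending arc, zero only on an edge joining two peaks or two valleys) and shows that the partial sums $\beta_i=c+\sigma_i$ are unimodal along each arc with controlled values at its two endpoints; the balanced condition is exactly what is needed to choose $c$ so that $c+\sigma_i>0$ on the descending arc, $c+\sigma_i<0$ on the ascending arc, and $c+\sigma_i=0$ on any flat edge, where $\delta_i=0$ forces $\beta_i=0$ but leaves $w_i$ free to be any positive number. (When $\bx$ has a unique peak or valley one uses the two strict inequalities of Definition~\ref{def:balanced}; when it has two peaks and two valleys, the two equalities are precisely the condition that a single constant $c$ can annihilate $\beta$ on both flat edges simultaneously.) This produces $A\in\mptn_L(C_n)$ with $A\bx=\bx$; since $\bx$ has both a positive and a negative entry it is not a multiple of $\bone$, so $1=\lambda_k(A)$ for some $k\ge 2$.

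What remains, and what I expect to be the main obstacle, is to show $k\le 3$, i.e.\ that $A$ has at most two eigenvalues below $1$. A naive Cauchy interlacing argument — deleting the peak and the valley and noting that $A(\{p,q\})$ splits as a direct sum of two path Dirichlet matrices — is too lossy: already on $C_4$ one can pick valid weights making the smallest eigenvalue of one of those blocks less than $1$, while $1$ still remains the third smallest eigenvalue of $A$. The plan is instead a nodal-count argument. Because $\bx$ is periodic, each of $C_n[V_+]$ and $C_n[V_-]$ is a single path, so $\bx$ has exactly two weak nodal domains, equivalently exactly two sign alternations around the cycle; invoking a nodal-domain theorem in the form appropriate to the cycle — concretely the oscillation theory for periodic Jacobi matrices, by which an eigenvector with exactly two sign changes around the cycle must belong to $\lambda_2(A)$ or $\lambda_3(A)$ — then forces $k\le 3$. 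The genuine work in this last step is to pin down the cleanest self-contained formulation or citation (for instance the discrete nodal domain theorems, or the oscillation results underlying Ferguson's analysis of periodic Jacobi matrices); together with Lemmas~\ref{lem:periodic} and \ref{lem:balanced} this closes the equivalence.
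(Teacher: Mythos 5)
The first half of your argument (constructing $A$ with $A\bx=\bx$) is essentially identical to the paper's: your telescoping relation $\beta_i-\beta_{i-1}=x_v$ with the free constant $c$ is exactly the paper's equation $WN\trans\bx = \lambda(N^+\bx + h\bone)$, and your use of the balanced condition to place $c$ is the paper's choice of $h$. That part is fine.

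The genuine gap is the final step, which you yourself flag as unfinished: you reduce ``$1\le\lambda_3(A)$'' to the claim that an eigenvector of a weighted cycle Laplacian with exactly two sign alternations must belong to $\lambda_2$ or $\lambda_3$, and then defer that claim to an unspecified ``oscillation theorem for periodic Jacobi matrices'' or ``discrete nodal domain theorem.'' No off-the-shelf nodal domain theorem gives this direction: the Courant-type discrete theorems (e.g.\ the one already used in Lemma~\ref{lem:periodic}) bound the number of nodal domains of an eigenvector of $\lambda_k$ from above, which is the forward implication, not the converse you need. An oscillation theorem for periodic Jacobi matrices in the precise form you want (including the treatment of zero entries of $\bx$, where ``number of sign changes'' is ambiguous) would have to be located, stated, and matched to your sign conventions; as written, the hardest part of the theorem is exactly the part you have not done. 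Note also that your dismissal of interlacing is aimed at the wrong target: the paper does not delete the peak and the valley, it deletes the \emph{zeros} of $\bx$ (or handles their absence by a congruence). Concretely, the paper argues in three cases on $|V_0|$: if $\bx$ has two zeros, $A(V_0)$ splits as $A[V_+]\oplus A[V_-]$ and Perron--Frobenius forces $\lambda=\lambda_1$ of each block, so $\lambda=\lambda_1(A(V_0))\le\lambda_3(A)$ by interlacing; if $\bx$ has one zero $i$, the congruence $D(A(i)-\lambda I)D$ with $D=\diag(\bx(i))$ produces a row-sum-zero matrix in $\mptn(P_{n-1})$ with one positive off-diagonal pair, which by Fiedler's theorem on acyclic matrices has exactly one negative eigenvalue, giving $\lambda=\lambda_2(A(i))\le\lambda_3(A)$; if $\bx$ has no zeros, the same congruence applied to $A-\lambda I$ yields a matrix in $\mptn(C_n)$ with two positive off-diagonal pairs, which is corrected by a rank-one-type perturbation $B$ to a path matrix, and Fiedler's theorem plus Weyl's inequality give $0<\lambda_4(A-\lambda I)$. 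You would need to supply an argument of comparable substance (or a precisely stated and verified citation) to close your version.
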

\begin{proof}
By Lemmas~\ref{lem:periodic} and \ref{lem:balanced}, an eigenvector for some $A\in\mptn_L(C_n)$ with respect to $\lambda_2(A)$ or $\lambda_3(A)$ is necessarily periodic and balanced, so we will focus on the other direction.  Let $\bx = \begin{bmatrix} x_i \end{bmatrix} \in \mathbb{R}^{V(C_n)}$ be a periodic and balanced vector and $\lambda > 0$.  We will find a matrix $A\in\mptn_L(C_n)$ such that $A\bx = \lambda\bx$.  Label the vertices of $C_n$ with $\mathbb{Z}_n$ following the cycle order such that $0$ is a peak and $x_0 > x_1$.  We also label the edge $\{j,j+1\}$ as $e_j$ for $j\in\mathbb{Z}_n$.

Let $N\in\mathbb{R}^{V(C_n)\times E(C_n)}$ be the incidence matrix of $C_n$ under the orientation $(j+1,j)$ for all $e_j$.  That is, its $i,e_j$-entry is $-1$ if $i = j$, $1$ if $i = j + 1$, and $0$ otherwise.  Then any matrix $A\in\mptn_L(C_n)$ can be written as $A = NWN\trans$, where $W = \diag(\bw)$ is the weight diagonal matrix corresponding to the weight vector $\bw$.  By direct computation, the $e_j$-th entry of $N\trans\bx$ is $x_{j+1} - x_{j}$.  

On the other hand, let $N^+$ be the matrix in $\mathbb{R}^{E(C_n)\times V(C_n)}$ whose $e_j,i$-entry is $1$ for any $0 \leq j < i \leq n$.  By direct computation, we have $NN^+\bx = \bx$ for any $\bx$ satisfying $\bone\trans\bx = 0$.  Also, the right kernel of $N$ is spanned by $\bone$ since the only cycle on $C_n$ is itself.  Thus, for any given $\bx$ with $\bone\trans\bx = 0$, the solution set of $N\bz = \bx$ is $\{\bz = N^+\bx + h\bone: h\in\mathbb{R}\}$.  By direct computation, the entry of $N^+\bx + h\bone$ corresponding to the edge $e_j$ is $h + \sum_{i\in[j+1,n-1]}x_i$.  

Since $NWN\trans\bx = \lambda\bx$ implies $WN\trans\bx = \lambda(N^+\bz + h\bone)$ for some $h$.  Therefore, a weight vector $\bw$ exists if and only if $N\trans\bx$ and $N^+\bz + h\bone$ have the same signs ($+$, $-$, or $0$) at each corresponding entries for some $h$.  The weight vector $\bw$ can be obtained from $\lambda(N^+\bz + h\bone)\oslash(N\trans\bx)$ such that an arbitrary positive weight is assigned to each entry with $\frac{0}{0}$.  

Let $p,p',q,q'$ be as defined in Definition~\ref{def:balanced}.  (Based on our choice of $0$, we have $p' = 0$.)  We first examine the signs on $N\trans\bx$.  By the periodic structure of $\bx$, the $e_j$-th entry of $N\trans\bx$ is negative if $j\in [p',q)$, positive if $j\in [q',p)$, and zero if $e_j$ is $\{p,p'\}$ and $\{q,q'\}$ whenever these edges exist.  

Now we assign appropriate $h$ so that $N^+\bz + h\bone$ matches the signs of $N\trans\bx$.  If $\bx$ has two peaks, then let $h = 0$.  If $\bx$ has two valleys, then let $h = -\sum_{i\in(p,q]}x_i$.  Note that these two statement are consistent when $\bx$ has two peaks and two valleys since $\bx$ is balanced.  If $\bx$ has a unique peak and a unique valley, or equivalently, $p = p'$ and $q = q'$, then choose an arbitrary $h$ that satisfies  
\[
\begin{aligned}
    0 &< h < -\sum_{i\in(p,q]} x_i, \text{ and }\\
    x_p - \sum_{i\in [p',q')} x_i &< h < x_p.
\end{aligned}
\]

Note that these four inequalities are consistent and lead to some feasible $h$ since $\bx$ is balanced.  For example, 
\[
    x_p - \sum_{i\in [p',q')} x_i = - \sum_{i\in (p,q)} x_i = -\sum_{i\in(p,q]} x_i + x_q < -\sum_{i\in(p,q]} x_i,
\]
while the others are immediate from the definition of a balanced vector.  With the given $h$, one may check that $x_{j+1} - x_j$ and $h + \sum_{i\in[j+1,n-1]}x_i$ have the same sign for all $j$.

Thus, there exists a matrix $A\in\mptn_L(C_n)$ such that $A\bx = \lambda\bx$.  Now we show that $\lambda$ is either $\lambda_2(A)$ or $\lambda_3(A)$.  Since $\lambda_1(A) = 0$, it is enough to show $\lambda \leq \lambda_3(A)$.  Suppose $\bx$ has two zeros on $i$ and $j$.  Then $A(\{i,j\})$ is the direct sum of $A[V_+]$ and $A[V_-]$.  Also, $A[V_+]\bx[V_+] = \lambda\bx[V_+]$ and the fact that $\bx[V_+]$ is entrywisely positive implies that $\lambda = \lambda_1(A[V_+])$ by the Perron--Frobenius theorem; similarly, $\lambda = \lambda_1(A[V_-])$.  By the Cauchy interlacing theorem, $\lambda = \lambda_1(A(\{i,j\}) \leq \lambda_3(A)$.  

Suppose $\bx$ has a unique zero on $i$.  We first claim that $\lambda = \lambda_2(A(i))$.  Let $D = \diag(\bx(i))$ and $A' = D(A(i) - \lambda I)D$.  Thus, $A'\bone = D(A(i) - \lambda I)\bx(i) = \bzero$.  Note that $A'$ is a matrix in $\mptn(P_{n-1})$ with exactly one pair of positive off-diagonal entries.  By \cite[Theorem~2.2]{Fiedler75ac}, $A'$ has exactly one negative eigenvalue.  Since $A'$ and $A(i) - \lambda I$ are congruent, $A(i) - \lambda I$ has exactly one negative eigenvalue, and $\lambda = \lambda_2(A(i))$.  By the Cauchy interlacing theorem, $\lambda = \lambda_2(A(i)) \leq \lambda_3(A)$.   

Suppose $\bx$ contains no zero.  We let $D = \diag(\bx)$ and $A' = D(A - \lambda I)D$.  Thus, $A'\bone = D(A - \lambda I)\bx = \bzero$.  Observe that $A'\in\mptn(C_n)$ has exactly two pairs of positive off-diagonal entries.  Let the $(i,i+1)$-entry of $A'$ be a positive entry with value $c > 0$.  Define the symmetric matrix $B\in\mathbb{R}^{V(C_n)\times V(C_n)}$ such that 
\[
    B[\{i,i+1\}] = \begin{bmatrix}
        c & -c \\
        -c & c
    \end{bmatrix}
\]
while other entries are zero.  Then $A' + B$ is a matrix in $\mptn(P_n)$ with $(A' + B)\bone = \bzero$ and exactly one pair of positive off-diagonal entries.  By \cite[Theorem~2.2]{Fiedler75ac}, $0 = \lambda_2(A' + B)$.  Since $A' + B\in\mptn(P_n)$, its eigenvalues are all distinct \cite{Hochstadt74}, implying $\lambda_3(A' + B) > 0$.  Now by applying Weyl's inequality (see, e.g., \cite{Zhang11}) to $A' = (A'+B) + (-B)$, we have 
\[
    0 < \lambda_3(A' + B) + \lambda_2(-B) \leq \lambda_4(A').
\]
Since $A'$ and $A - \lambda I$ are congruent, this further implies that $0 < \lambda_4(A - \lambda I)$ and $\lambda \leq \lambda_3(A)$.  
\end{proof}

% not included in the end:
% A - I + 1/n J
% an example of not enough

\section{Concluding remark}
\label{sec:conclude}
In this paper, we characterized all possible Fiedler vectors of a tree.  Given a Fiedler-like vector, we also provide the complete solution to all the weight assignments that match the vector.  As a side product, the relation between the Dirichlet matrix and the Perron vector is shown to be bijective.  We also characterized all possible eigenvectors corresponding to the second or the third smallest eigenvalues of a weighted Laplacian matrix of a cycle.

The results of trees and cycles show that the Fiedler vector (or eigenvector of $\lambda_3$) demonstrates very different behavior depending on the topological properties of the base graph, i.e., having a cycle or not.  For example, every tree has exactly one characteristic set of one vertex or two adjacent vertices, while every cycle has two such sets.  Moreover, $\lambda_2$ seems quite special for trees as the Fiedler vector can be fully described.  In contrast, $\lambda_2$ and $\lambda_3$ for cycles shares many common properties and are not so distinguishable by its eigenvectors.  To the extreme, we see that it is possible that $\lambda_2 = \cdots = \lambda_n$ for complete graphs. 
This shed some light on the definition of the Colin de Verdi\`ere parameter \cite{CdV, CdVF}, where the maximum nullity of the second smallest eigenvalue is used to characterize some the topological properties of a graph.  However, the properties of the eigenvectors of the weighted Laplacian matrices are far from being clear.  With a better understanding on these eigenvectors, it would provide a more concrete theoretical foundation for the applications such as graph partitioning, graph drawing, and spectral clustering.
% cwp

% \bibliography{./JLaTeX/AuthorA,./JLaTeX/JournalA,./JLaTeX/JepBib}{}
% \bibliographystyle{plain}

%bibitem code

% Use https://mathscinet.ams.org/msnhtml/serials.pdf for journal abbreviations.
\end{document}